\numberwithin{equation}{section}
\theoremstyle{definition}
\newtheorem{definition}{Definition}[section]
\newtheorem{problem}{Problem}
\newtheorem*{acknowledgments}{Acknowledgments}
\theoremstyle{plain}
\newtheorem{theorem}{Theorem}[section]
\newtheorem{corollary}[theorem]{Corollary}
\newtheorem{lemma}[theorem]{Lemma}
\newtheorem{proposition}[theorem]{Proposition}
\theoremstyle{remark}
\newtheorem{remark}[theorem]{Remark}
\title[Transversely Isotropic Elastic Travel Time Tomography]{Partial Global Recovery in the Elastic Travel Time Tomography Problem for Transversely Isotropic Media}
\author{Yuzhou Zou}
\address{Department of Mathematics\\
Stanford University\\
Stanford, CA 94305-2125, U.S.A.}
\email{zou91@stanford.edu}
\date{Originally uploaded on October 2, 2019. Last updated: \today}
\begin{document}
\begin{abstract}
We consider the problem of recovering material parameters in a transversely isotropic medium from the $qP$ and $qSV$ waves' travel times, given the axis of isotropy and the material parameters associated to the $qSH$ wave speed. The operators obtained from the pseudolinearization argument are of parabolic type, and so we discuss inverting operators whose symbols are of parabolic type. We present stability estimates for recovering either one parameter from one wave speed or two parameters from two wave speeds with the remaining parameters either known or with a known functional relationship, and these estimates provide injectivity among parameters that differ on sets of small width.
\end{abstract}
\maketitle
\sloppy

\tableofcontents

\section{Introduction}
\label{intro-sec}

\subsection{Background on transversely isotropic elasticity}
In this paper we consider the travel time tomography problem for transversely isotropic elastic media. The context is the (linear) elastic wave equation $u_{tt} - Eu = 0$ in $\mathbb{R}^3$ describing the evolution of an elastic material over time. Here, $u$ is a vector-valued function of time and space describing the displacement of an elastic material from a rest frame, and $E$ is a second-order differential operator mapping vector-valued functions to vector-valued functions (and hence can be thought of as a matrix of second-order differential operators). Explicitly,
\[(Eu)_i = \rho^{-1}\sum_{jkl}{\partial_j(c_{ijkl}\partial_lu_k)} = \sum_{jkl}{\frac{c_{ijkl}}{\rho}\partial_j\partial_lu_k} + \text{lower order terms}\]
where $\rho(x)>0$ is the density of the material, and $c_{ijkl}(x)$ are the components of the ``elasticity tensor'' which affect the evolution of the equation; these components in turn depend on the physical properties of the material and in general may vary over space. The goal is thus to recover these components from some set of observations regarding the evolution of this equation.

Associated to the elastic wave equations are a set of wave speeds. The wave speeds can be described as follows: the principal symbol of the operator $u \mapsto u_{tt}-Eu$ is given by $-\tau^2\text{Id} + \sigma(-E)(x,\xi)$, where 
\[\sigma(-E)(x,\xi) = \left(\sum_{jl}{\frac{c_{ijkl}(x)}{\rho(x)}\xi_j\xi_l}\right)_{ik}.\]
The matrix $\sigma(-E)(x,\xi)$ is always symmetric and positive definite for all $x$ and all $\xi\ne 0$, and so for those $(x,\xi)$ we have that $\sigma(-E)(x,\xi)$ has three positive eigenvalues (possibly with multiplicity) which depend on $x$ and $\xi$; denote these eigenvalues by $G_j(x,\xi)$. We have that
\[WF(u)\subset\{(t,\tau,x,\xi)\,:\,-\tau^2\text{Id} + \sigma(-E)(x,\xi)\text{ is not invertible}\},\]
for all solutions $u$ of $u_{tt} - Eu = 0$, and the latter set consists precisely of the points where $\tau^2$ is an eigenvalue of $\sigma(-E)(x,\xi)$, i.e. $\tau^2 = G_j(x,\xi)$ for some $j$. If we \emph{assume that the multiplicity of the eigenvalues is constant} among all $(x,\xi)$, so that $\{\tau^2 = G_j(x,\xi)\}$ are disjoint, then a classical propagation of singularities result \cite{taylor} states that the singularities of $u$, which are contained in $\{\tau^2 = G_j(x,\xi)\}$ for some $j$, will then be invariant under the Hamilton flow of $\tau^2 - G_j(x,\xi)$ for that $j$. Note that if $G$ is a positive definite quadratic form in $\xi$, i.e. the dual metric function of some metric $g$, then the Hamilton flow of $\tau^2-G$ restricted to $\{\tau = 1/2\}$ is exactly the geodesic flow with respect to $g$, and the singularities would propagate in the same manner as the singularities for the scalar wave equation $u_{tt} - \Delta_gu$. Thus, the Hamiltonian dynamics with respect to the Hamiltonian $\tau^2-G_j$ describe the dynamics of the so-called \emph{elastic waves}, with $G_j$ called the wave speeds\footnote{For full accuracy, we should call these quantities the ``squared wave speeds''; however for our purposes it is more convenient to work with these quantities, and hence we will refer to these quantities as the wave speeds; in any case data regarding the Hamiltonian dynamics of the wave speeds can be recovered from data regarding the Hamiltonian dynamics of its square, and vice versa.}; we will use knowledge regarding these dynamics to recover the elastic coefficients in $E$.

Since the elasticity tensor is a 4-tensor in 3-dimensional space, it \emph{a priori} has up to 81 independent components; however inherent symmetries of the elasticity tensor reduce the independence to at most 21 independent components in general. In the case of fully isotropic elasticity, this dependence is further reduced to just two independent components, and they are often described by the Lam\'e parameters $\lambda$ and $\mu$. In this case there is multiplicity for the wave speeds as well: the largest eigenvalue has multiplicity $1$ and is called the $p$ wave speed, while the other two eigenvalues coincide and is called the $s$ wave speed; these two wave speeds can be described explicitly in terms of $\lambda$ and $\mu$. We will instead study the case of \emph{transversely isotropic} elasticity, and we follow the notational conventions of \cite{ti}, which in turn borrows conventions from \cite{tsvankin}. In this case, there is an \emph{axis of isotropy} around which the material behaves isotropically. We will denote this axis as a covector field $\overline{\xi}(x)$ normalized under the dual metric function on $T^*\mathbb{R}^3$ associated to the Euclidean metric to have norm $1$. (In \cite{ti}, this axis was denoted by $\omega$; we will reserve $\omega$ for use as a spherical variable.) In addition, there are $5$ independent components of the elasticity tensor, which we denote by $a_{11}$, $a_{33}$, $a_{55}$, $a_{66}$, and $E^2$ (with\footnote{Note, despite the notation, that $E^2$ is not necessarily nonnegative. One can think of $E^2$ as a measure of deviation of the $qP$ or $qSV$ waves from having an ``ellipsoidal slowness surface,'' i.e. having the corresponding wave speeds being quadratic forms.} $E^2 = (a_{11}-a_{55})(a_{33}-a_{55}) - (a_{13}+a_{55})^2$ in the notation of \cite{tsvankin}); they will be referred to as the ``material parameters'' for the elastic material. Note that fully isotropic elasticity is a special case of transversely isotropic elasticity, with $a_{11} = a_{33} = \lambda+2\mu$, $a_{55} = a_{66} = \mu$, and $E^2 = 0$. Transversely isotropic elastic materials appear naturally in the Earth, where rocks are formed in layers over time; within each layer there is isotropic behavior, but the composition is not isotropic across different layers (see Section 1 of \cite{ti} for more examples and details).

The eigenvalues in transversely isotropic elasticity will not always have multiplicity for all $(x,\xi)$, i.e. it is possible for the three eigenvalues $G_1(x,\xi)$, $G_2(x,\xi)$, and $G_3(x,\xi)$ to be distinct at some $(x,\xi)$. However, two of the eigenvalues will tend to be similar, much like the $s$ wave speeds in isotropic elasticity, so these will be called the $qSH$ and $qSV$ wave speeds, while the remaining will be like the $p$ wave speed and will be called the $qP$ wave speed. We thus let $G_{qP}(x,\xi)$, $G_{qSH}(x,\xi)$, and $G_{qSV}(x,\xi)$ denote these eigenvalues. These functions can be explicitly described: assuming the background metric is the Euclidean metric, if we fix a point $x$ and make an orthogonal change of coordinates $(x_1,x_2,x_3)$ so that $\overline{\xi}(x)$ aligns with the $dx_3$ axis, and we write $(x,\xi)\in T^*\mathbb{R}^3$ in the canonical coordinates (i.e. $\xi = \sum_{i=1}^3{\xi_i\,dx_i}$), then
\begin{equation}
\label{qsh-formula}
G_{qSH}(x,\xi) = a_{66}|\xi'|^2+a_{55}\xi_3^2
\end{equation}
where $|\xi'|^2 = \xi_1^2+\xi_2^2$, and $G_{qP/qSV} = \frac{1}{2}G_{\pm}$, with
\begin{equation}
\label{qp-qsv-formula}
\begin{aligned}
G_{\pm}(x,\xi) &= (a_{11}+a_{55})|\xi'|^2+(a_{33}+a_{55})\xi_3^2 \\
&\pm\sqrt{((a_{11}-a_{55})|\xi'|^2+(a_{33}-a_{55})\xi_3^2)^2-4E^2|\xi'|^2\xi_3^2}
\end{aligned}
\end{equation}
where $+$ refers to the $qP$ wave speed and $-$ refers to the $qSV$ wave speed, and all material parameters are evaluated at $x$. More properties of the wave speeds, especially regarding their Hamiltonian dynamics, are explored in Section \ref{hamdyn}.

We note that the eigenvalues can coincide for some value of $(x,\xi)$ (for example, the $qSH$ and $qSV$ speeds always coincide when $\xi'=0$), but that nonetheless we may choose the eigenvalues to vary smoothly in $(x,\xi)$. In the rest of this paper, by ``wave speed data'' we will mean data regarding the Hamiltonian dynamics of the functions $G_{qSH}$, $G_{qP}$, and $G_{qSV}$, as defined in \eqref{qsh-formula} and \eqref{qp-qsv-formula}, and despite possible coincidences at some points we will treat the data regarding the dynamics of these three functions as separate pieces of data. In spirit this data should be obtainable by observing the behaviors of solutions to the elastic wave equation, via the propagation of singularities argument above, though such access may be more difficult in practice due to the coincidence of wave speeds; see Remark \ref{mult-rema} for more details.

\subsection{The travel time tomography problem and main results}
We thus phrase the question as follows: suppose $\Omega\subset\mathbb{R}^3$ is a bounded domain with smooth boundary, and assume the boundary is strictly convex with respect to either $qP$ or $qSV$ Hamiltonian dynamics. Suppose we know the \emph{lens relation} of the Hamiltonian flows of the wave speeds. That is, for any inwards-pointing covector $(x,\xi)\in\partial_-S^*\Omega$, we know the exiting covector of the Hamilton flow $(X(t),\Xi(t))$ starting at $(x,\xi)$, as well as the time of exit (i.e. we know $(t_0,X(t_0),\Xi(t_0))$ where $t_0 = \inf\{t>0\,:\,X(t)\not\in\overline\Omega\}$). Can we use this data to recover the material parameters which determine these Hamiltonian trajectories? (Note that if we only knew the travel times between boundary points, then this gives the lens data; see Lemma \ref{dtau} and Corollary \ref{exit} and the surrounding remarks.) By ``recovery'' we first focus on the injectivity problem. Thus, suppose we have two collections of parameters and isotropy axes $\{a_{11},a_{33},a_{55},a_{66},E^2,\overline{\xi}\}$ and $\{\tilde{a}_{11},\tilde{a}_{33},\tilde{a}_{55},\tilde{a}_{66},\tilde{E}^2,\tilde{\overline{\xi}}\}$, and let $r_{ii} = \tilde{a}_{ii} - a_{ii}$ (write $r_{E^2} = \tilde{E^2} - E^2$). We can phrase our problem as follows:
\begin{problem}
Suppose the two collections of parameters and isotropy axes give the same lens data for the Hamilton flows with respect to the $qP$ and $qSV$ Hamiltonian dynamics. Then is it true that $\tilde{\overline{\xi}} = \overline{\xi}$ and $r_{\nu} = 0$ ($\nu = 11, 33, 55, 66, E^2$)?
\end{problem}
Implicitly, we can think of the parameters without the tildes as a ``background'' or ``known'' collection of parameters, and the parameters with the tildes as a proposed collection of parameters we wish to compare against the background collection, given that the two collections produce the same travel time data.

Inverse problems regarding transversely isotropic elasticity have been studied in \cite{mr}, where the authors showed that the Dirichlet-to-Neumann map for the elastic wave equation determined the travel times for all wave speeds that satisfy the ``disjoint mode'' assumption. They also showed that for such wave speeds that are also quadratic forms (i.e. a dual metric corresponding to some Riemannian metric) in $\xi$ (e.g. the $qSH$ wave speed, or all three wave speeds if $E^2\equiv 0$) that two of the five parameters can be determined from the travel time data, using techniques from boundary rigidity. In \cite{ti}, the authors showed that the axis of isotropy $\overline{\xi}$ and the parameters $a_{55}$ and $a_{66}$ can be recovered from the $qSH$ wave speed (in part due to the $qSH$ wave speed being a quadratic form in $\xi$), assuming that the kernel of the axis of isotropy $\overline{\xi}$ is an integrable hyperplane distribution, i.e. $\overline{\xi}$ is a smooth multiple of some closed 1-form (locally representable as $df$ for some layer function $f$), as well as geometric conditions such as a ``convex foliation'' condition. Those results were also obtained using boundary rigidity results, specifically those developed in \cite{br2} and their predecessors (in particular this is where the ``convex foliation'' assumption comes in). We thus will assume that $a_{55}$, $a_{66}$, and the axis $\overline{\xi}$ are known, and hence focus on recovering $a_{11}$, $a_{33}$, and $E^2$ from the $qP$ and $qSV$ wave speeds. 

For convenience, we will also make the following assumptions:
\begin{itemize} 
\item We assume all parameters involved are smooth. Indeed, later we will construct operators based on the parameters which turn out to be pseudodifferential operators, and hence we will need the parameters to be smooth in order to use the smooth pseudodifferential theory.

\item We assume the differences between the parameters are compactly supported in $\Omega$; in general this can be done by extending the parameters to agree outside $\Omega$.

\item For the wave speeds $G = G_{qP}$ or $G_{qSV}$, we assume that $G$ is strictly convex in the fiber variable. (This is true if $G$ is a quadratic form, i.e. corresponds to a metric, and is always true for the $qP$ wave speed \cite{chapman}, though there are materials for which this does not hold for the qSV wave speed, such as for the Greenhorn shale; such cases are related to the phenomenon of ``wave triplication'' \cite{shale}.) As a consequence, we have that for every $x$ the map $\xi\mapsto\partial_{\xi}G(x,\xi)$ is invertible (if $G$ is a metric then this map is actually linear). For $\omega\in\mathbb{R}^3$, let $\xi(\omega;x)$ denote this inverse map (sometimes this will be written as $\xi(\omega)$ if the dependence on $x$ is not important). That is, let $\xi(\omega;x)$ satisfy
\begin{equation}
\frac{\partial G}{\partial\xi}(x,\xi(\omega;x)) = \omega.\label{xiomega}
\end{equation}
Write\footnote{$\xi_T$ standing for the ``transverse'' component and $\xi_I$ standing for the ``isotropic'' component.} 
\begin{equation}
\label{xit-eq}
\xi_T(\omega;x) := \xi(\omega;x)\cdot\overline{\xi}(x)
\end{equation}
and
\begin{equation}
\label{xii-eq}
\xi_I^2(\omega;x) := |\xi(\omega;x)|^2-\xi_T(\omega;x)^2.
\end{equation}
\item Given $(x_0,\xi_0)$, let $(X(t,x_0,\xi_0),\Xi(t,x_0,\xi_0))$ denote the Hamilton flow starting at $(x_0,\xi_0)$ (with respect to either the $qP$ or $qSV$ wave speeds, and with respect to the background parameters $\{a_{\nu}\}$). Consider the map $\mathbb{R}\times\mathbb{R}^3\times\mathbb{S}^2\ni(t,x,\omega)\mapsto X(t,x,\xi(\omega;x))$. We assume that
\begin{equation}
\label{noconjpts}
\begin{aligned}
&\text{for all }t\ne 0\text{ and all }x,\omega\text{, the derivative }\\
&\frac{\partial}{\partial (t,\omega)}(X(t,x,\xi(\omega)))\text{ has full rank.} 
\end{aligned}
\end{equation}
This is the analogue of the ``no conjugate points'' assumption often found in X-ray inverse problems.

\item We also assume that there are no trapped trajectories, that is, for all $(x,\omega)\in\mathbb{R}^3\times\mathbb{S}^2$ and any compact subset $K\subset\Omega$, the set
\[\{t\in\mathbb{R}\,:\,X(t,x,\xi(\omega))\in K\}\]
is compact.

\item Similarly, let $(\tilde{X},\tilde{\Xi})$ denote the Hamilton flow with respect to the second collection of parameters $\{\tilde{a}_{\nu}\}$. We will make the technical assumption that $\frac{\partial\tilde\Xi}{\partial\xi}$ is always invertible.

\item Finally, as in \cite{ti}, we assume that the kernel of the axis of isotropy $\overline{\xi}$ is an integrable hyperplane distribution. This is a natural local (though not global) geological assumption, as discussed in \cite{ti}.
\end{itemize}
Given these assumptions, we are ready to state our main results. We start with the problem of recovering one of the parameters $a_{11}$, $a_{33}$, or $E^2$, if the other two are known.
\begin{theorem}
\label{oneparam}
Suppose for $\nu = a_{11}$, $a_{33}$, or $E^2$ that the other parameters are known. Furthermore, suppose that \emph{a priori} the difference $r_{\nu}$ is known to be supported in a set of sufficiently small width. Then we can recover $a_{11}$ from the $qP$ travel time, or the $qSV$ travel time if $E^2$ is known to be nonzero, $a_{33}$ from the $qP$ travel time, and $E^2$ from either the $qP$ or $qSV$ travel times. (That is, knowledge of just the $qP$ travel times guarantees $r_{\nu}\equiv 0$ with the assumptions above, while knowledge of just the $qSV$ travel times guarantees $r_{\nu}\equiv 0$ for $\nu = E^2$ and for $\nu = a_{11}$ if $E^2$ is known to be nonzero.) In lieu of support assumptions on $r_{\nu}$, we still have stability estimates for $r_{\nu}$.
\end{theorem}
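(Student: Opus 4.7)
I would begin by applying the standard pseudolinearization trick (as in Stefanov--Uhlmann) to the lens relation. Because the two Hamilton flows share their lens data and both have no conjugate points by \eqref{noconjpts}, one obtains the integral identity
\[
0 = \int_0^{\tau(x,\omega)} (\tilde G - G)\big(X(s;x,\omega),\Xi(s;x,\omega)\big)\,ds
\]
along the Hamilton flow of $G$ for every entering $(x,\omega)\in \partial\Omega\times\mathbb{S}^2$, with initial covector $\xi(\omega;x)$ from \eqref{xiomega}. Since only one parameter differs, the fundamental theorem of calculus factors the integrand as $\tilde G - G = W_\nu(x,\xi)\,r_\nu(x)$ with $W_\nu = \int_0^1 (\partial_{a_\nu} G)(\,\cdot\,; a_\nu + t\,r_\nu)\,dt$, and the injectivity problem reduces to injectivity of the weighted Hamilton-ray transform $I_\nu r(x,\omega) := \int_0^{\tau} W_\nu(X,\Xi)\,r(X)\,ds$.

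\textbf{Symbol of the normal operator.} The next step is to analyze $N_\nu := I_\nu^* I_\nu$. Under \eqref{noconjpts} this is a pseudodifferential operator of order $-1$ on $\Omega$, with principal symbol proportional to
\[
\sigma(N_\nu)(x,\eta) \,\sim\, \frac{1}{|\eta|}\int_{\omega\in \mathbb{S}^2,\, \omega\perp\eta} W_\nu\big(x,\xi(\omega;x)\big)^2\,d\ell(\omega).
\]
Computing $\partial_{a_\nu} G$ from the explicit formulas for $G_{qP}$ and $G_{qSV}$ yields: $\partial_{a_{11}} G_{qP} = |\xi'|^2\,(1+P)$ and $\partial_{a_{33}} G_{qP} = \xi_3^2\,(1+P)$ with $P>0$, so the weight is positive except on a single cotangent direction and the symbol is elliptic. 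For $\partial_{E^2} G$ one gets $\mp 2|\xi'|^2\xi_3^2/\sqrt{R}$, which vanishes on $\{\xi_T=0\}\cup\{\xi_I=0\}$ and produces a symbol that vanishes only when $\eta$ is parallel to the isotropy axis --- the parabolic degeneracy. The $qSV$ weights for $a_{11}$ and $a_{33}$ carry the opposite sign in front of the square-root correction: this forces $\partial_{a_{11}} G_{qSV}$ to vanish when $E^2=0$ (hence the hypothesis $E^2\neq 0$), while the $a_{33}$ case degenerates so severely that the corresponding symbol cannot be inverted by the methods of this paper, explaining why that case is absent from the theorem.

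\textbf{Inversion and stability from small width.} The central obstacle is inverting $N_\nu$ in the parabolic cases. In the elliptic cases a classical pseudodifferential parametrix gives $\|r_\nu\|_{H^s} \le C\big(\|N_\nu r_\nu\|_{H^{s+1}} + \|r_\nu\|_{H^{s-1}}\big)$. In the parabolic cases I would build a parametrix in an anisotropic Sobolev scale adapted to the characteristic direction, which is the main technical development announced in the abstract; this gives the analogous estimate with the norms replaced by their anisotropic counterparts. In both settings the small-width hypothesis closes the argument via a Poincar\'e--Friedrichs inequality on the support of $r_\nu$: if that support sits in a slab of width $w$, the lower-order remainder is bounded by $C w^\epsilon \|r_\nu\|$ and is absorbed for $w$ small enough. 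One then obtains $\|r_\nu\| \lesssim \|I_\nu r_\nu\|$, yielding both the quantitative stability statement and the uniqueness $r_\nu \equiv 0$ under the narrow-support assumption. The key challenge throughout is the symbolic analysis and construction of the parabolic parametrix; the pseudolinearization and Poincar\'e steps are standard once that is in place.
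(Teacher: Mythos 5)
Your outline diverges from the paper at two load-bearing points, and both are genuine gaps. First, the identity you start from is not the Stefanov--Uhlmann pseudolinearization: the exact identity is at the level of the \emph{flows}, $\tilde{Z}(t,z)-Z(t,z)=\int_0^t\frac{\partial\tilde{Z}}{\partial z}(t-s,Z(s,z))(\tilde{V}-V)|_{Z(s,z)}\,ds$, and equality of the lens data kills the left side. What you wrote, $0=\int_0^\tau(\tilde G-G)(X,\Xi)\,ds$, is only the first-order (Fermat-type) linearization of the travel time; it is not exact, and since the problem is nonlinear (the weights themselves depend on the unknown parameters) you cannot reduce to ``injectivity of the weighted Hamilton-ray transform $I_\nu$'' without controlling the quadratic remainder. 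In the paper the exact identity produces operators acting on $\nabla r_\nu$ with matrix weights $\frac{\partial\tilde\Xi}{\partial\xi}$, plus zeroth-order operators $\tilde N^\nu_\pm$ acting on $r_\nu$ (see \eqref{npseudo}); this structure is what the final estimate $\|\nabla r_\nu\|_{L^2}\le C(\|f\|_{H^2}+\|r_\nu\|_{H^{1/2}})$ and the $H^{1/2}$ Poincar\'e absorption are built on.

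Second, and more seriously, the step you defer (``build a parametrix in an anisotropic Sobolev scale'') is exactly where the theorem lives, and your choice $N_\nu=I_\nu^*I_\nu$ forecloses the mechanism that makes it work. In the degenerate cases the weight is $W_\nu=f^\nu\xi_T^2$, so the amplitude of $I_\nu^*I_\nu$ at $t=0$ is $W_\nu^2\propto\xi_T^4$: the principal symbol then vanishes \emph{quartically} on $\Sigma$ (the situation the paper cannot handle, cf. $N^{33}_-$), and since $I_\nu^*I_\nu$ is self-adjoint its subprincipal symbol on the characteristic set is real, so there is no parabolic structure $p_{-1}+ip_{-2}$ with $p_{-2}$ elliptic on $\Sigma$ to invert. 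The paper instead composes $I^\nu_\pm$ with the one-sided operator $L_\pm$ of \eqref{adjoint} (weight $(\partial\tilde\Xi/\partial\xi)^{-1}$, cutoff near the equatorial sphere), so the amplitude is linear in $E^\nu$ and the principal symbol vanishes only quadratically; a stationary-phase computation then shows the subprincipal symbol on $\Sigma$ is purely imaginary and, under the geological sign assumptions following \eqref{subsymb} (layer curvature and $\overline\xi\cdot\partial_x a_\pm$ of consistent sign), nonvanishing. Only then does the Boutet de Monvel calculus $S^{m,k}(T^*\mathbb{R}^3,\Sigma)$ — refined here using the integrability of $\ker\overline\xi$ — give a parametrix with $QN=\mathrm{Id}+R$, $R\in\Psi^{-1,-1}$ (Lemma \ref{paropinv}), and hence the stability estimate. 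Without identifying this subprincipal nondegeneracy (and the hypotheses it needs), ``anisotropic parametrix'' has nothing to run on: the principal symbol alone vanishes at $\Sigma$, and the excluded case $(a_{33},qSV)$ shows invertibility genuinely fails when the subprincipal term also degenerates. A smaller slip with the same root: $\partial_{a_{33}}G_{qP}=\xi_T^2(1+\cdots)$, so the $a_{33}/qP$ operator is \emph{not} elliptic — it is one of the parabolic cases, handled only via its subprincipal symbol.
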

Note that a precise notion of width is given in Definition \ref{width}. The term ``stability estimates'' roughly refer to estimates of the form
\begin{equation}
\label{rough-stab-eq}
\|\nabla r_{\nu}\|_{L^2}\le C\|r_{\nu}\|_{H^{1/2}}
\end{equation}
which hold \emph{assuming} that the travel times with respect to $\{a_{\nu}\}$ and $\{\tilde{a}_{\nu}\}$ are the same. The term $\|r_{\nu}\|_{H^{1/2}}$ should morally be controlled by $\|\nabla r_{\nu}\|_{L^2}$, given the assumption of the compact support of $r_{\nu}$, and in fact Poincar\'e's inequality offers a way of controlling $\|u\|_{L^2}$ by $\|\nabla u\|_{L^2}$ for any $u\in C_c^{\infty}$ by a constant depending on the size of the support of $u$ (in particular going to zero as the width of the support goes to zero); controlling $\|u\|_{H^{1/2}}$ follows from Poincar\'e's inequality by a simple modification. Thus for $u = r_{\nu}$ with sufficiently small width of support we can absorb the $\|r_{\nu}\|_{H^{1/2}}$ term into the $\|\nabla r_{\nu}\|_{L^2}$ term. A more precise statement will be stated later in Section \ref{extended-outline-subsec} after the appropriate operators for the analysis of the errors $r_{\nu}$ have been introduced, and will be explained further in Section \ref{recovsec}. 

Note that the recovery of $a_{11}$ was already proven in \cite{ti} under the convex foliation condition; here we instead assume an \emph{a priori} small width on the support of $r_{\nu}$ but will otherwise argue globally instead of using the local artificial boundary argument. See Remark \ref{width-remark} regarding the practicability of the small width assumption, as well as Remark \ref{global-remark} regarding the choice of using the global argument instead of the local artificial boundary argument.

We next consider the problem of recovering two of the parameters, with the other parameter known. The results are of the same flavor as before, though in this case knowledge of the travel time data of both wave speeds must be combined to derive the result:
\begin{theorem}
\label{twoparam}
Suppose either $a_{11}$ or $a_{33}$ is known. From the knowledge of both $qP$ and $qSV$ travel times, we can recover $(a_{33},E^2)$ (resp. $(a_{11},E^2)$) if the differences $r_{33}$ and $r_{E^2}$ (resp. $r_{11}$ and $r_{E^2}$) are supported in a set of sufficiently small width. In lieu of support assumptions, we also have stability estimates for $(r_{33},r_{E^2})$ (resp. $(r_{11},r_{E^2})$).
\end{theorem}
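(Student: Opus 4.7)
The plan is to extend the pseudolinearization strategy from Theorem \ref{oneparam} by combining the identities for the $qP$ and $qSV$ wave speeds into a $2\times 2$ system and inverting it as a matrix of parabolic-type pseudodifferential operators. Suppose $a_{11}$ is known, so that $r_{11}\equiv 0$ and only $r_{33}$ and $r_{E^2}$ need to be recovered. Starting from two parameter sets with identical $qP$ and $qSV$ lens data, Lemma \ref{dtau} and Corollary \ref{exit} give, for each wave speed $G\in\{G_{qP},G_{qSV}\}$, an integral identity along each Hamilton trajectory of $G$. A Taylor expansion of $G-\tilde G$ in the two remaining parameters produces weighted line integrals
\[\int_0^{t_0}\bigl(w_{33}^G r_{33} + w_{E^2}^G r_{E^2}\bigr)(X(t))\,dt = 0,\]
where the weights $w_{33}^G$ and $w_{E^2}^G$ are built from $\partial_{a_{33}}G$ and $\partial_{E^2}G$ evaluated along the trajectory.

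Following the recipe of Section \ref{recovsec}, I would parametrize trajectories by $(x,\omega)$ via \eqref{xiomega}, differentiate the identity as needed, and use the no-conjugate-points assumption \eqref{noconjpts} to extract a Schwartz kernel amenable to symbolic analysis. Applied to both wave speeds simultaneously, this yields a $2\times 2$ matrix $\mathcal{A}=(A_{ij})_{i,j\in\{P,S\}}$ of pseudodifferential operators whose symbols are parabolic in the sense of the paper's inversion framework and which annihilates $(r_{33},r_{E^2})^T$. Each diagonal entry has the same structure as the scalar operator arising in the proof of Theorem \ref{oneparam}.

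The main obstacle is verifying that $\mathcal{A}$ is elliptic in the parabolic sense, i.e.\ that its principal symbol is an invertible matrix. Computing the leading symbol, the rows are integrals against the natural trajectory family of the weight pairs $(\partial_{a_{33}}G_{qP},\partial_{E^2}G_{qP})$ and $(\partial_{a_{33}}G_{qSV},\partial_{E^2}G_{qSV})$, so non-degeneracy of $\mathcal{A}$ amounts to a quantitative version of the statement that the $qP$ and $qSV$ wave speeds depend on $(a_{33},E^2)$ in linearly independent ways. This should be visible from the explicit formulas for $G_\pm$ in the introduction — the $\pm$ sign in front of the square root ensures that the two dependences differ — but the linear independence must be upgraded to invertibility at the symbolic level, which is where the bulk of the technical work lies. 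The integrable-hyperplane assumption on $\ker\overline{\xi}$ should be used here to set up the parabolic scaling (with derivatives along the foliation weighted differently from transverse derivatives) under which the matrix symbol is homogeneous.

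Once ellipticity of $\mathcal{A}$ is established, standard parametrix arguments extend the stability estimate from the scalar case to the $2\times 2$ system, yielding
\[\|\nabla(r_{33},r_{E^2})\|_{L^2}\le C\,\|(r_{33},r_{E^2})\|_{H^{1/2}}.\]
A Poincaré-type inequality for compactly supported functions on a set of sufficiently small width then absorbs the right-hand side into the left, forcing $r_{33}\equiv r_{E^2}\equiv 0$. The case in which $a_{33}$ is known in place of $a_{11}$ is entirely parallel after relabeling, with the only nontrivial check being that the corresponding weight pairs $(\partial_{a_{11}}G_{qP},\partial_{E^2}G_{qP})$ and $(\partial_{a_{11}}G_{qSV},\partial_{E^2}G_{qSV})$ remain symbolically independent.
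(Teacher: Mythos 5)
Your overall strategy matches the paper's: combine the $qP$ and $qSV$ pseudolinearization identities into a $2\times 2$ block system of operators annihilating $(\nabla r_{33},\nabla r_{E^2})$ modulo zeroth-order terms, invert it, and absorb the remaining $H^{1/2}$ term by a small-width Poincar\'e inequality. However, the step you defer as ``where the bulk of the technical work lies'' is the actual content of the theorem, and your framing of it is incorrect as stated: the principal symbol of the system is \emph{not} an invertible matrix. On $\Sigma=\text{span }\overline{\xi}$ every relevant entry except the one coming from $N^{11}_+$ (which only appears when $a_{33}$ is known) has principal symbol vanishing at least quadratically, so ``ellipticity in the parabolic sense'' cannot be reduced to a pointwise linear-independence statement about how $G_\pm$ depend on $(a_{33},E^2)$, and the ``$\pm$ sign in front of the square root'' heuristic does not capture the mechanism. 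What the paper actually uses is a precise hierarchy of vanishing orders plus subprincipal information: when $a_{11}$ is known, the diagonal entries $N^{33}_+$ and $N^{E^2}_-$ are of parabolic type (nondegenerate quadratic vanishing of the principal symbol on $\Sigma$ together with a nonvanishing purely imaginary subprincipal symbol there, which itself requires the geological sign assumptions following \eqref{subsymb}), the off-diagonal $N^{E^2}_+$ vanishes quadratically, and crucially $N^{33}_-$ vanishes \emph{quartically} on $\Sigma$; it is this extra vanishing that makes $1-\sigma(N^{33}_-)\sigma(N^{E^2}_+)/(\sigma(N^{33}_+)\sigma(N^{E^2}_-))$ elliptic in $S^{0,0}$ near $\Sigma$. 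When $a_{33}$ is known the mechanism is different again, resting on the genuine ellipticity of $N^{11}_+$ together with quadratic vanishing of $N^{11}_-$ and $N^{E^2}_+$. None of this structure is identified or used in your proposal.

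There is a second omission away from $\Sigma$: invertibility of the determinant factor is not automatic there either. The paper obtains it from the quantitative estimates \eqref{cutoffest}, which require \emph{choosing} the cutoff $\chi$ in the adjoint $L_\pm$ to be supported in $\{|\xi_T|<\epsilon|\xi|\}$ and identically one nearer the equatorial sphere, with $\epsilon$ small enough that the off-diagonal over diagonal ratio is $O(\epsilon^2)$ (and, in the $a_{33}$-known case, that $a_{-,I}a_{+,T}/(a_{+,I}a_{-,T})$ stays bounded). Your proposal never mentions this choice, and without it there is no argument that the matrix symbol is invertible off $\Sigma$. So while the skeleton (two-speed system, parametrix in the $S^{m,k}(T^*\mathbb{R}^3,\Sigma)$ calculus built on the foliated structure, $H^{1/2}$ Poincar\'e absorption) coincides with the paper's, the proof as proposed has a genuine gap at its central step.
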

At the end of Section \ref{recovsec}, we comment on the obstruction for proving the theorem for the problem of recovering $(a_{11},a_{33})$ from a known value of $E^2$.

Another way of recovering the coefficients is to assume a functional relationship among the coefficients, say with one coefficient represented as a function of the other two, so that the number of effective coefficients to solve for is reduced. A similar case of two coefficients depending on the third was explored in \cite{ti}, and in our case we have a result similar to the ones above:
\begin{theorem} 
\label{func}
Suppose there is a known functional relationship $a_{33} = f(a_{11},E^2)$ with $\frac{\partial f}{\partial a_{11}}\ge 0$, or $E^2 = f(a_{11},a_{33})$ with $\left|\frac{\partial f}{\partial a_{33}}\right|>0$, or $a_{11} = f(a_{33},E^2)$ with the derivatives $\frac{\partial f}{\partial a_{33}}$ and $\frac{\partial f}{\partial E^2}$ constant and $\frac{\partial f}{\partial a_{33}}\ne 0$, and if the $r_{\nu}$ have sufficiently small width of support, then we can recover $(a_{11},E^2)$ (resp. $(a_{11},a_{33})$ and $(a_{33},E^2)$) from the combined $qP$ and $qSV$ travel time data. In lieu of support assumptions, we also have stability estimates, as before.
\end{theorem}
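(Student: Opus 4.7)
The plan is to reduce each of the three cases to a two-parameter recovery problem in the spirit of Theorem \ref{twoparam}, but with the two equations (coming from the $qP$ and $qSV$ pseudolinearizations) coupled through the functional relationship. The first step is to convert the functional relationship into a linear relationship among the differences: in the case $a_{33}=f(a_{11},E^2)$, apply the fundamental theorem of calculus along the segment from $(a_{11},E^2)$ to $(\tilde a_{11},\tilde E^2)$ to write
\[r_{33}=\alpha(x)\,r_{11}+\beta(x)\,r_{E^2},\qquad \alpha(x)=\int_0^1\tfrac{\partial f}{\partial a_{11}}\,ds,\ \beta(x)=\int_0^1\tfrac{\partial f}{\partial E^2}\,ds,\]
with $\alpha\ge 0$ by hypothesis, and analogously in the other two cases. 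In the third case, the hypothesis that $\partial f/\partial a_{33}$ and $\partial f/\partial E^2$ are constant means $\alpha,\beta$ are literally constants, which will be important for symbolic calculus purposes.

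Next I would substitute this linear relation into the pseudolinearized equations obtained from the $qP$ and $qSV$ travel times. Each wave speed contributes an equation of the form $A^{11}_G r_{11}+A^{33}_G r_{33}+A^{E^2}_G r_{E^2}=F_G$, where $A^{\nu}_G$ are the parabolic operators computed in the preceding sections and $F_G$ is controlled by the lens data (and is zero if the travel times agree). Eliminating the dependent variable produces a $2\times 2$ system
\[\mathcal{A}\begin{pmatrix}r_{\mu_1}\\ r_{\mu_2}\end{pmatrix}=\begin{pmatrix}F_{qP}\\ F_{qSV}\end{pmatrix},\qquad \mathcal{A}=\begin{pmatrix}A^{\mu_1}_{qP}+\alpha A^{\text{dep}}_{qP}& A^{\mu_2}_{qP}+\beta A^{\text{dep}}_{qP}\\ A^{\mu_1}_{qSV}+\alpha A^{\text{dep}}_{qSV}& A^{\mu_2}_{qSV}+\beta A^{\text{dep}}_{qSV}\end{pmatrix},\]
where $\mu_1,\mu_2$ are the two independent parameters and ``dep'' labels the dependent one.

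The key technical step is verifying that $\mathcal{A}$ is a matrix of operators whose joint principal symbol (in the parabolic sense introduced earlier in the paper) is invertible. This reduces to checking that the $2\times 2$ symbol-level determinant is bounded away from zero uniformly in the appropriate cone/parabolic scaling. Here the sign conditions in the hypotheses play their role: in the first case the condition $\alpha\ge 0$ ensures that the entries combine constructively (or at worst non-destructively), preventing the determinant from collapsing; in the second case $|\beta|>0$ ensures the dependent term dominates in the right places; in the third case the fact that $\alpha,\beta$ are constants means the linear combinations of operators are themselves operators of the same parabolic type, and the $\partial f/\partial a_{33}\ne 0$ condition ensures the determinant does not vanish. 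In each case the determinant computation is reduced by direct algebra to the expressions for $A^{\nu}_G$ already exhibited in the one- and two-parameter proofs, combined with the strict convexity of $G$ in $\xi$ and the form of $\xi_T$, $\xi_I$.

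Given the invertibility of the parabolic symbol of $\mathcal{A}$, one constructs a parametrix and derives an a priori estimate of the form $\|\nabla(r_{\mu_1},r_{\mu_2})\|_{L^2}\le C\|(r_{\mu_1},r_{\mu_2})\|_{H^{1/2}}+C\|(F_{qP},F_{qSV})\|$. When the travel times agree the right-most term vanishes, and a Poincar\'e-type inequality (valid because $r_{\nu}$ has small width of support) absorbs the $H^{1/2}$ term into the gradient term, forcing $r_{\mu_1}=r_{\mu_2}=0$ and hence $r_{\text{dep}}=0$ by the linear relation. The main obstacle I anticipate is verifying the uniform invertibility of the parabolic symbol of $\mathcal{A}$ in the first two cases, where $\alpha$ and $\beta$ are functions rather than constants and could in principle degenerate; controlling the determinant requires combining the explicit formulas for the $A^{\nu}_G$ with the signed hypotheses on $f$, and is where the specific sign/nondegeneracy assumptions in the theorem statement become essential.
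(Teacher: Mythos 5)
Your overall architecture (reduce to an effective $2\times 2$ system via the functional relationship, invert its symbol in the parabolic calculus, then combine the resulting stability estimate with the small-width Poincar\'e inequality) is the same as the paper's, which works with effective weights $E^{\nu}_{eff}=\int_0^1(\partial G/\partial\nu+\partial G/\partial\nu_0\,\partial f/\partial\nu)\,ds$ rather than substituting $r_{33}=\alpha r_{11}+\beta r_{E^2}$ (your substitution also works, at the cost of extra $(\nabla\alpha)r_{11}$-type terms that must be pushed into the $\tilde N$-part). However, the step you label ``key technical'' is exactly where your proposal has a genuine gap: the determinant of the effective symbol matrix is \emph{not} bounded away from zero in any naive sense, because in all three cases at least one column of the matrix has entries whose principal symbols vanish (quadratically) on $\Sigma$, so the determinant's principal part vanishes on $\Sigma$ as well. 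Invertibility must therefore be argued in the $S^{m,k}(T^*\mathbb{R}^3,\Sigma)$ calculus of Section \ref{invparsec}: near $\Sigma$ one needs the degenerate entries (or, in the third case, the determinant itself) to have nondegenerately quadratically vanishing principal part together with a nonvanishing imaginary subprincipal part, and away from $\Sigma$ one needs the quantitative principal-symbol formulas \eqref{a33func}, \eqref{e2func}, \eqref{a11func} with $\chi$ supported in $\{|\xi_T|<\epsilon|\xi|\}$, using $a_{+,T}\le\frac{\epsilon^2}{1-\epsilon^2}a_{+,I}$ and the sign hypotheses ($\tilde f_{11}\ge 0$, $\tilde f_{33}$ bounded away from zero) to keep the determinant nonzero for small $\epsilon$. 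Your stated reasons for the hypotheses (``entries combine constructively,'' ``the dependent term dominates'') do not capture this mechanism; e.g.\ in the case $E^2=f(a_{11},a_{33})$, the point of $|\partial f/\partial a_{33}|>0$ is that adding $\tilde f_{33}$ times the $E^2$-operator rescues the \emph{quartic} degeneracy of $N^{33}_-$ on $\Sigma$, giving $N^{33}_{eff,-}$ nondegenerate quadratic vanishing and a nonzero subprincipal symbol.

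The misdiagnosis is most serious in the third case $a_{11}=f(a_{33},E^2)$. There the structure differs from Theorem \ref{twoparam}: since multiples of the elliptic $N^{11}_+$ are added, \emph{both} top-row entries are elliptic near $\Sigma$ while both bottom-row entries degenerate there, so the ``one parabolic diagonal entry plus small off-diagonal ratio'' argument does not apply; instead one must invert through the adjugate and show $1/d\in S^{2,-2}(T^*\mathbb{R}^3,\Sigma)$ for $d$ the determinant. The constancy of $\partial f/\partial a_{33}$ and $\partial f/\partial E^2$ is not needed merely so that ``linear combinations are of the same parabolic type'' (variable smooth coefficients would preserve that); it is needed because the subprincipal part of $d$ on $\Sigma$ is
$\sigma_{-1}(N^{33}_{eff,+})\sigma_{-2}(N^{E^2}_{eff,-})-\sigma_{-1}(N^{E^2}_{eff,+})\sigma_{-2}(N^{33}_{eff,-})$, and only under the constancy assumption do the $\tilde f_{33}\tilde f_{E^2}$ cross terms cancel to leave $\tilde f_{33}\,\sigma_{-1}(N^{11}_+)\,\sigma_{-2}(N^{E^2}_-)$, which is nonzero precisely when $\partial f/\partial a_{33}\ne 0$. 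Without that cancellation the subprincipal part of $d$ could vanish on $\Sigma$ and the parametrix construction fails. As written, your proposal would not get past this point; you would need to carry out the case-by-case symbol computations (near $\Sigma$ via subprincipal symbols, away from $\Sigma$ via the $\epsilon$-quantitative estimates) to substantiate the invertibility claim.
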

An extended outline of the proofs of these three theorems is given in Section \ref{extended-outline-subsec}.

We now make a few remarks regarding the applicability of the above results:
\begin{remark}
\label{width-remark}
The main mathematical content of each of the theorems above is a stability estimate on $r_{\nu}$, roughly of the form \eqref{rough-stab-eq}, with the stability estimates upgrading to an injectivity statement (i.e. $r_{\nu}\equiv 0$) given an additional \emph{a priori} assumption on the support of $r_{\nu}$ via a modified form of Poincar\'e's inequality. Such \emph{a priori} assumptions are natural in time-lapse monitoring problems, where one wishes to keep track of elastic changes in a relatively small ``reservoir'' region, outside of which the elasticity can be assumed to remain constant. (Note that the transversely isotropic elasticity in the Earth does not technically satisfy our assumptions due to our simplified ``no conjugate points'' assumption above; however it turns out that the information we use in the inversion problem will only use trajectories whose velocity vectors are roughly orthogonal to the axis of isotropy; hence it may be possible to apply the above results near the boundary of the Earth, where the trajectories connecting nearby points do not have conjugate points.) Furthermore, \emph{a priori} assumptions regarding the \emph{width} of the support are natural in monitoring problems near fault lines, where changes in elasticity due to fault movement should be supported in a thin region near the fault (although in such cases the parameters are often discontinuous and thus require a modification of the argument presented here; see Remark \ref{global-remark} regarding a possible modification).
\end{remark}
\begin{remark}
In these results, there are no \emph{a priori} assumptions on the location of the support of $r_{\nu}$ (beyond having sufficiently small width), and that there is no ``diffeomorphism invariance'' ambiguity as is present in many boundary rigidity-related inverse problems. This is obscured by the fact that we have \emph{chosen to represent our ambient manifold as the Euclidean space $\mathbb{R}^3$ with the Euclidean metric}; note that any diffeomorphism fixing the boundary of a nonempty bounded open set and preserving the Euclidean metric must actually be the identity.
\end{remark}
\begin{remark}
\label{mult-rema}
We note that obtaining the travel time data in practice, say from observing the behavior of solutions to the elastic wave equation, may be difficult due to the non-constancy of the multiplicity of the eigenvalues of the elastic wave operator. Indeed, note that we can rewrite \eqref{qsh-formula} (without using a pointwise orthogonal change of coordinates to align the axis of isotropy with the $dx_3$ axis) as
\[G_{qSH} = a_{55}\xi_T^2 + a_{66}\xi_I^2,\]
where $\xi_T$ and $\xi_I$ are defined in \eqref{xit-eq} and \eqref{xii-eq}, and furthermore \eqref{qp-qsv-formula} can be rewritten as
\[G_{qSV} = a_{55}\xi_T^2 + C\xi_I^2\]
for some function $C$ which is smooth away from $\xi=0$; explicitly
\[C = a_{55} + \frac{2E^2\xi_T^2}{A+\sqrt{A^2-B}}\]
with
\[A = (a_{11}-a_{55})\xi_I^2+(a_{33}-a_{55})\xi_T^2, \quad B = 4E^2\xi_I^2\xi_T^2.\]
In particular, we have $G_{qSH}-G_{qSV} = O(\xi_I^2)$, and hence the $qSH$ and $qSV$ wave speeds will always coincide at $\xi$ satisfying $\xi_I = 0$, i.e. for $\xi$ parallel to the axis of isotropy. In fact, the $O(\xi_I^2)$ difference guarantees that the slowness surfaces, i.e. the level sets, of the $qSV$ and $qSH$ wave speeds will intersect tangentially at $\{\xi_I=0\}$. Moreover, for generic values of the material parameters (in particular away from the case of full isotropy), the two wave speeds will not coincide everywhere. It follows that the multiplicities of the eigenvalues need not be constant, and hence the elastic wave operator need not be of principal type, i.e. the standard propagation of singularities result need not apply. Physically, this corresponds to the elastic waves ``switching mode'' at points of non-constant multiplicity.

We nonetheless can take our eigenvalue functions to vary smoothly in $x$ and $\xi$, and hence we can still make sense of Hamiltonian trajectories associated to each wave speed. We emphasize that in this paper we will assume that we somehow have access to the travel time data of these trajectories (without concern for the behavior of the elastic waves in the interior) and aim to prove results assuming we somehow have access to this data. Practical methods of obtaining such data are not immediately clear due to the non-constant multiplicity and would require further investigation.
\end{remark}
The paper is organized as follows. In Section \ref{pseudolinear-sec}, we discuss the pseudolinearization procedure which turns the inverse problem of interest into a problem in microlocal analysis regarding certain matrix-valued pseudodifferential operators. The vast majority of the remainder of the paper is dedicated to setting up the analysis to studying the operators of interest. In Section \ref{comp-sec}, we compute the principal symbols of the operators of interest, to show that our operators are of ``parabolic type,'' in the sense that the principal symbols are scalar-valued, with the scalar quantity being non-negative but not elliptic, but with a subprincipal symbol which is purely imaginary and non-degenerate on the characteristic set of the principal symbol. In Section \ref{invparsec}, we discuss a symbol calculus, first studied by Boutet de Monvel in \cite{bdm}, developed in part to provide inverses to ``parabolic type'' operators such as our operators of interest. Finally, in Section \ref{recovsec}, we apply the theory discussed in Section \ref{invparsec} to prove Theorems \ref{oneparam}, \ref{twoparam}, and \ref{func}.

\begin{acknowledgments}
The author would like to thank Professors Maarten de Hoop, Gunther Uhlmann, and Andr\'as Vasy for their helpful comments in this work. The author would also like to thank the referee for useful comments in improving the work's background and exposition. The author gratefully acknowledges partial support from the National Science Foundation under grant number DMS-1664683.
\end{acknowledgments}

\section{The Pseudolinearization Argument}
\label{pseudolinear-sec}

\subsection{The pseudolinearization formula and associated operators}
\label{pseudolinear-subsec}
We will make use of the \emph{Stefanov-Uhlmann pseudolinearization formula} to convert our inverse problems of interest into problems in microlocal analysis, regarding the behavior of certain operators obtained from the pseudolinearization formula. This formula first appeared in \cite{su} and has been used to solve problems in boundary rigidity \cite{br1,br2}, which in turn has been used to solve the travel time tomography problem for fully isotropic elasticity \cite{isoe}. 

The formula says the following: given two vector fields $V$ and $\tilde{V}$ on some manifold, and given their corresponding flows $Z(t,z)$ and $\tilde{Z}(t,z)$, we have
\begin{equation}
\label{su-pseudolin-eq}
\tilde{Z}(t,z) - Z(t,z) = \int_0^t{\frac{\partial\tilde{Z}}{\partial z}(t-s,Z(s,z))\cdot(\tilde{V}-V)|_{Z(s,z)}\,ds}.
\end{equation}
(On manifolds the statement requires coordinates to make sense, but the result does not depend on the choice of coordinates.) The proof follows from an application of the fundamental theorem of calculus to the function $s\mapsto \tilde{Z}(t-s,Z(s,z))$. 

Suppose now that $\{a_{\nu}\}$ and $\{\tilde{a}_{\nu}\}$ are two sets of parameters giving the same travel time data. We apply the pseudolinearization formula \eqref{su-pseudolin-eq} to $V$ and $\tilde{V}$ corresponding to the Hamilton flow of $G$, where $G$ is one of the wave speeds above. (Thus in the pseudolinearization formula we replace $z$ and $Z$ by $(x,\xi)$ and $(X,\Xi)$ since the manifold of interest is the cotangent bundle.) If $(x,\xi)$ is an inward-pointing covector on the boundary of $\Omega$, and $\tau(x,\xi)$ denotes the travel time of the Hamiltonian trajectory starting at $(x,\xi)$ (for either collection of parameters), then $(\tilde{X}(\tau(x,\xi),(x,\xi)),\tilde{\Xi}(\tau(x,\xi),(x,\xi))) = (X(\tau(x,\xi),(x,\xi)),\Xi(\tau(x,\xi),(x,\xi)))$ since the lens relations agree, and hence
\begin{align*} \vec{0}_6 = \int_0^{\tau(x,\xi)}&\frac{\partial(\tilde{X},\tilde{\Xi})}{\partial(x,\xi)}(\tau(x,\xi)-s,(X(s,x,\xi),\Xi(s,x,\xi)))\\
&\cdot(\tilde{V}-V)|_{(X(s,x,\xi),\Xi(s,x,\xi))}\,ds.
\end{align*}
Note that we can extend the limits of the integral to infinity by extending the trajectories to never return to $\Omega$ since $\tilde{V}-V$ can be extended to zero outside $\Omega$. If for $(x,\xi)\in T^*\Omega$ (i.e. the interior) we also define $\tau(x,\xi)$ as the travel time of the Hamilton trajectory starting at $(x,\xi)$ before the trajectory leaves $\Omega$, then we have that
\[\tau(X(t,x,\xi),\Xi(t,x,\xi)) = \tau(x,\xi) - t\]
and hence the above equation can be written as
\begin{equation}
\begin{aligned}
\vec{0}_6 = \int_{\mathbb{R}}&\frac{\partial(\tilde{X},\tilde{\Xi})}{\partial(x,\xi)}(\tau(X(t,x,\xi),\Xi(t,x,\xi)),(X(t,x,\xi),\Xi(t,x,\xi)))\\
&\cdot(\tilde{V}-V)|_{(X(t,x,\xi),\Xi(t,x,\xi))}\,dt 
\end{aligned}
\label{pseudo1}
\end{equation}
for any $(x,\xi)\in T^*\Omega$ (not just for inward-pointing covectors on the boundary).

Let $\tilde{G}$ denote the wave speed with respect to the $\tilde{a}_{\nu}$. We then have
\[\tilde{V} - V = -\partial_x(\tilde{G}-G)\cdot\partial_{\xi} + \partial_{\xi}(\tilde{G}-G)\cdot\partial_x.\]
Now, if $r_{\nu} = 0$ for all $\nu$, then of course $\tilde{G} - G\equiv 0$; however, in general we can use the fundamental theorem of calculus to write
\begin{align*} \tilde{G} - G &= G(a_{11}+r_{11},a_{33}+r_{33},E^2+r_{E^2};\xi) - G(a_{11},a_{33},E^2;\xi) \\
&=\int_0^1{\frac{d}{ds}\left(G(a_{11}+sr_{11},a_{33}+sr_{33},E^2+sr_{E^2};\xi)\right)\,ds} \\
&=\sum_{\nu}{\int_0^1{\frac{\partial G}{\partial\nu}(a_{11}+sr_{11},a_{33}+sr_{33},E^2+sr_{E^2};\xi)r_{\nu}\,ds}}\\
&=\sum_{\nu}{E^{\nu}(x,\xi)r_{\nu}(x)}
\end{align*}
with $\nu$ ranging over $a_{11}$, $a_{33}$, and $E^2$ and
\begin{equation}
E^{\nu}(x,\xi) = \int_0^1{\frac{\partial G}{\partial\nu}(a_{11}(x)+sr_{11}(x),a_{33}(x)+sr_{33}(x),E^2(x)+sr_{E^2}(x);\xi)\,ds},\label{enu}
\end{equation}
and hence
\[\tilde{V}-V = -\left(\sum_{\nu}{E^{\nu}\nabla r_{\nu} + \partial_xE^{\nu}r_{\nu}}\right)\cdot\partial_{\xi} + \left(\sum_{\nu}{\partial_{\xi}E^{\nu}r_{\nu}}\right)\cdot\partial_x.\]
Substituting this into \eqref{pseudo1} and keeping the bottom three rows (i.e. the rows corresponding to $\frac{\partial\tilde{\Xi}}{\partial(x,\xi)}$) gives
$\vec{0}_3 = \sum_{\nu}{I^{\nu}[\nabla r_{\nu}](x,\xi) + \tilde{I}^{\nu}[r_{\nu}](x,\xi)}$ for all $(x,\xi)$, where
\begin{equation}
\begin{aligned}
&I^{\nu}[f_1,f_2,f_3](x,\xi) \\
&= -\int_{\mathbb{R}}{\left(E^{\nu}\frac{\partial\tilde\Xi}{\partial\xi}(\tau(\cdot),\cdot)\right)(X(t),\Xi(t))\cdot\begin{pmatrix} f_1 \\ f_2 \\ f_3 \end{pmatrix}(X(t))\,dt}
\end{aligned}
\label{inu}
\end{equation}
and
\begin{equation}
\begin{aligned}
&\tilde{I}^{\nu}[f](x,\xi) \\
&= \int_{\mathbb{R}}{\left(-\partial_xE^{\nu}\frac{\partial\tilde\Xi}{\partial\xi}(\tau(\cdot),\cdot) + \partial_{\xi}E^{\nu}\frac{\partial\tilde\Xi}{\partial x}(\tau(\cdot),\cdot)\right)(X(t),\Xi(t))f(X(t))\,dt}
\end{aligned}
\label{itildenu}
\end{equation}
with $(X(t),\Xi(t)) = (X(t,x,\xi),\Xi(t,x,\xi))$. In other words, we have constructed operators $I^{\nu}$ and $\tilde{I}^{\nu}$, \emph{which depend on the unknown parameters $a_{\nu}$ and $\tilde{a}_{\nu}$}, for which the differences $r_{\nu}$ satisfy a linear equation. (The terms in the integrand depend on the choice of dynamics used, i.e. whether we are considering the dynamics of the $qP$ or $qSV$ wave speed, and so we will denote these operators $I^{\nu}_{\pm}$ and $\tilde{I}^{\nu}_{\pm}$ depending on the choice of dynamics used.) Note that these operators map functions on $\mathbb{R}^3$ to functions on $T^*\mathbb{R}^3$, so we will compose with a ``formal adjoint'' operator to map back to functions on $\mathbb{R}^3$. Thus, for $v: T^*\mathbb{R}^3\rightarrow\mathbb{C}$, define
\begin{equation}
L_{\pm}[v](x) = \int_{\mathbb{S}^2}{\chi(x,\omega)\left(\frac{\partial\tilde\Xi}{\partial\xi}(\tau(x,\xi(\omega)),(x,\xi(\omega)))\right)^{-1}v(x,\xi(\omega))\,d\omega} \label{adjoint}
\end{equation}
with $\xi(\omega)$ defined in \eqref{xiomega}, $\chi$ real-valued and smooth (we will mostly consider $\chi$ which are identically $1$ in a neighborhood of the equatorial sphere $\{(x,\omega)\in\mathbb{R}^3\times\mathbb{S}^2\,:\,\overline{\xi}(x)\cdot\omega = 0\}$ perpendicular to $\overline{\xi}$), and the $\pm$ determining whether to consider the dynamics of the $qP$ or $qSV$ wave speed. We now let $N^{\nu}_{\pm} = L_{\pm}\circ I^{\nu}_{\pm}$ and $\tilde{N}^{\nu}_{\pm} = L_{\pm}\circ \tilde{I}^{\nu}_{\pm}$, so that we have the formulas
\begin{equation}
\vec{0}_3 = \sum_{\nu}{N^{\nu}_{\pm}[\nabla r_{\nu}] + \tilde{N}^{\nu}_{\pm}[r_{\nu}]}.\label{npseudo}
\end{equation}
We now analyze the operators $N^{\nu}_{\pm}$ and $\tilde{N}^{\nu}_{\pm}$, in hopes of turning the equation \eqref{npseudo} into a stability estimate, or even better, to conclude that $r_{\nu}\equiv 0$. In Section \ref{excomp}, we prove that these operators are (matrix-valued) pseudodifferential operators (abbreviated $\Psi$DO) and analyze the behavior of their symbols, as summarized in the following theorem:
\begin{theorem}
\label{symbolthm}
Let $\chi\equiv 1$ near the equatorial sphere. For $\nu = 11, 33, E^2$, we have that $N^{\nu}_{\pm}$ and $\tilde{N}^{\nu}_{\pm}$ are matrix-valued $\Psi$DOs of order $-1$, with $N^{\nu}_{\pm}$ having scalar-valued (i.e. multiples of the identity) principal symbols. In addition, $N^{11}_+$ is elliptic, while all other principal symbols $\sigma_{-1}(N^{\nu}_{\pm})$ vanish at least quadratically on
\[\Sigma = \text{span }\overline{\xi} = \{(x,\xi)\in T^*\mathbb{R}^3\,:\,\xi = s\overline{\xi}(x)\text{ for some }s\},\]
with all cases of quadratic vanishing being non-degenerate except for $N^{33}_-$, and also except for $N^{11}_-$ if $E^2$ is known to be identically zero. Moreover, $\sigma_{-1}(N^{33}_-)$ will vanish quartically on $\Sigma$. 

In addition, for each $N^{\nu}_{\pm}$ with vanishing principal symbol on $\Sigma$, the subprincipal symbol (of the left-reduced symbol) on $\Sigma$ is purely imaginary and is linear modulo an overall factor of $|\zeta|^{-3}$, and under suitable geological assumptions (see the remarks following \eqref{subsymb}) we have that the subprincipal symbols of $N^{11}_-$ (if it is known that $E^2>0$ everywhere), $N^{33}_+$, and $N^{E^2}_{\pm}$ are nonvanishing on $\Sigma$ away from the zero section. On the other hand, $N^{33}_-$ has vanishing subprincipal symbol at $\Sigma$, in addition to having quartically vanishing principal symbol on $\Sigma$.

Finally, all operators $\tilde{N}^{\nu}_{\pm}$ except $\tilde{N}^{11}_{\pm}$ have (matrix-valued) principal symbols which vanish on $\Sigma$.
\end{theorem}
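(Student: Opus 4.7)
The plan is to express $N^{\nu}_{\pm}$ and $\tilde N^{\nu}_{\pm}$ as oscillatory integrals and compute their symbols via stationary phase. Inserting $f(y) = (2\pi)^{-3}\int e^{iy\cdot\eta}\hat f(\eta)\,d\eta$ into the definitions \eqref{inu}, \eqref{itildenu}, \eqref{adjoint} writes $N^{\nu}_{\pm}[\nabla f](x)$ as
\[(2\pi)^{-3}\int_{\mathbb{S}^2}\int_{\mathbb{R}}\int_{\mathbb{R}^3}\chi(x,\omega)\,A(x,\omega)\,E^{\nu}(X(t),\Xi(t))\,B(X(t),\Xi(t))\cdot i\eta\,e^{i(X(t)-x)\cdot\eta}\hat f(\eta)\,d\eta\,dt\,d\omega,\]
with $A = (\partial_\xi\tilde\Xi(\tau,\cdot))^{-1}|_{(x,\xi(\omega))}$ and $B = \partial_\xi\tilde\Xi(\tau,\cdot)|_{(X(t),\Xi(t))}$, and an analogous formula for $\tilde N^{\nu}_{\pm}$ without the $i\eta$ factor. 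The phase $\phi = (X(t,x,\xi(\omega))-x)\cdot\eta$ satisfies $\partial_t\phi|_{t=0} = \omega\cdot\eta$ (since $\partial_\xi G(x,\xi(\omega))=\omega$) and $\partial_\omega\phi|_{t=0} = 0$, so the critical set is the great circle $\mathbb{S}^1_{\eta^\perp}\subset\mathbb{S}^2$ at $t=0$. Assumption \eqref{noconjpts} guarantees that the transverse Hessian is nondegenerate and rules out essential contributions from other critical points.

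At $t=0$ we have $AB = \mathrm{Id}$, so the leading stationary-phase contribution yields
\[\sigma_{-1}(N^{\nu}_{\pm})(x,\eta) = |\eta|^{-1}\int_{\mathbb{S}^1_{\eta^\perp}}\chi(x,\omega)\,E^{\nu}(x,\xi(\omega))\,J(x,\omega,\eta)\,d\omega\cdot\mathrm{Id},\]
with $J$ the transverse Hessian factor; in particular $N^{\nu}_{\pm}$ is a matrix-valued $\Psi$DO of order $-1$ with scalar principal symbol. To evaluate on $\Sigma$, note that $\eta = s\overline{\xi}$ together with the axial symmetry of $G_{\pm}$ forces $\omega\mapsto\xi(\omega)$ to send the critical great circle into $\{\xi_3 = 0\}$. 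Taylor-expanding the square root in the defining formulas for $G_{\pm}$ gives
\[G_+ = 2a_{11}|\xi'|^2 + 2a_{33}\xi_3^2 - \tfrac{2E^2}{a_{11}-a_{55}}\xi_3^2 + O(\xi_3^4),\quad G_- = 2a_{55}(|\xi'|^2+\xi_3^2) + \tfrac{2E^2}{a_{11}-a_{55}}\xi_3^2 + O(\xi_3^4)\]
(taking $a_{11}>a_{55}$), from which $E^{11}_+|_{\xi_3=0} = 2|\xi'|^2\neq 0$ (giving ellipticity of $N^{11}_+$), while $E^{33}_+$, $E^{E^2}_{\pm}$, and generically $E^{11}_-$ vanish to quadratic order in $\xi_3$ with nondegenerate leading coefficients, $E^{33}_-$ vanishes to quartic order, and $E^{11}_-$ degenerates to quartic if $E^2\equiv 0$. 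Since $\xi_3(\omega)$ is proportional to the distance from $\eta$ to $\Sigma$ as $\omega$ moves along the critical great circle near the equator, these orders in $\xi_3$ translate directly into the claimed orders of vanishing of $\sigma_{-1}(N^{\nu}_{\pm})$ on $\Sigma$.

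The subprincipal symbol comes from the next term of the stationary-phase expansion, which contributes an extra factor of $i$ and a linear expression in first derivatives of the amplitude at the critical set, yielding a purely imaginary symbol linear in $\eta$ modulo an overall $|\eta|^{-3}$. The nonvanishing of the subprincipal symbols for $N^{11}_-$ (when $E^2 > 0$), $N^{33}_+$, and $N^{E^2}_{\pm}$ reduces to showing that the relevant first derivatives of $E^{\nu}$ do not cancel when integrated along the equator, which the geological assumptions ensure; the vanishing of the subprincipal of $N^{33}_-$ reflects the extra order of vanishing of $E^{33}_-$ beyond quadratic. For $\tilde N^{\nu}_{\pm}$, the same stationary phase yields a principal symbol that is an equatorial integral of $\partial_x E^{\nu}$ and $\partial_\xi E^{\nu}$ contracted with Jacobian factors; since $E^{\nu}$ vanishes identically on the $\xi_3 = 0$ locus for all $\nu\neq 11$, its tangential derivatives there also vanish, and the normal derivatives inherit the higher-order vanishing in $\xi_3$, so the principal symbol of $\tilde N^{\nu}_{\pm}$ vanishes on $\Sigma$ for $\nu\neq 11$.

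The main obstacle is the subprincipal computation: one must propagate the Taylor expansion of $E^{\nu}$ in $\xi_3$ together with the expansion of $AB - \mathrm{Id}$ off $t=0$ through the Morse-lemma asymptotic series while keeping track of the matrix structure, and verify the precise cancellation for $N^{33}_-$ against the nondegeneracy in the other cases.
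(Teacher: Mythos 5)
Your overall route is the paper's: represent the Schwartz kernel of $L\circ I$ as an oscillatory integral, apply stationary phase in the variables $(t,\omega_{\parallel})$ transverse to the critical great circle $\zeta^{\perp}\cap\mathbb{S}^2$ at $t=0$, read off order $-1$ and the scalar principal symbol from the fact that the matrix weight is the identity at $t=0$ (this is \eqref{psymgenfinal}--\eqref{psymb} in the paper), and then deduce the vanishing orders on $\Sigma$ from the structure of $\partial G_{\pm}/\partial\nu$: your Taylor expansions in $\xi_3$ reproduce the paper's exact factorizations (e.g. $\partial G_-/\partial a_{11}$ a negative multiple of $E^2\xi_I^4\xi_T^2$ and $\partial G_-/\partial a_{33}$ of $E^2\xi_I^2\xi_T^4$), so the ellipticity of $N^{11}_+$, the quadratic vanishing of the others, the quartic vanishing for $N^{33}_-$, the degeneration of $N^{11}_-$ when $E^2\equiv 0$, and the vanishing on $\Sigma$ of $\sigma_{-1}(\tilde N^{\nu}_{\pm})$ for $\nu\neq 11$ all come out as in the paper (nondegeneracy of the quadratic vanishing, which you assert "translates directly", in fact needs the weight $\chi f^{\nu}_{\pm}$ to be bounded away from zero on the equatorial circle, which is where the hypothesis $\chi\equiv 1$ near the equator is used).

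The genuine gap is the subprincipal part, which is half of the theorem and which you explicitly defer as "the main obstacle". Two things are missing. First, the structural step: the phase is not quadratic and the amplitude is matrix-valued away from $t=0$, so a priori the $|\zeta|^{-2}$ coefficient of the stationary-phase expansion contains contributions from higher derivatives of the phase and from derivatives of the matrix factor, and your claim that the subprincipal is "a linear expression in first derivatives of the amplitude at the critical set" is exactly what has to be proved. The paper does this by deforming the phase to its quadratic model ($\phi_s$), Taylor-expanding in $s$, and exploiting that the amplitude has the form $Fg^2$ with $g=\xi_T$ vanishing on the equator while each application of $\partial_t\partial_{\omega_{\parallel}}-\frac{\zeta}{|\zeta|}\cdot\alpha\,\partial^2_{\omega_{\parallel}}$ spends at most one $t$-derivative; this is what shows that on $\Sigma$ only the values of $F$ and the first derivatives of $g$ survive, giving the scalar, purely imaginary formula \eqref{subsymgenfinal}. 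Second, the evaluation: one still needs the Hamilton-dynamics computations $\partial_t\xi_T$, $\partial_{\omega_{\parallel}}\xi_T=\mathrm{sgn}(s)/h_{\pm}$ and $\overline{\xi}\cdot\alpha(\omega)$ leading to \eqref{subsymintegrand} and \eqref{subsymb}; the geological assumptions then enter by forcing the bracket (second-fundamental-form term plus parameter-gradient term) to have a fixed sign on the equator, so the nonvanishing for $N^{11}_-$ (when $E^2>0$), $N^{33}_+$, $N^{E^2}_{\pm}$ comes from sign-definiteness of an explicit integrand together with sign-definiteness of $f^{\nu}_{\pm}$ there, not from showing that "first derivatives of $E^{\nu}$ do not cancel"; likewise the vanishing for $N^{33}_-$ is not a cancellation but the vanishing of $f^{33}_-$ on the equatorial sphere. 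As written, your proposal establishes the order, the scalar principal symbol, and the vanishing orders, but not the subprincipal statements.
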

A formula for the subprincipal symbol is given by \eqref{subsymb}. In particular, for $N = N^{11}_-$ (if $E^2>0$), $N^{33}_+$, and $N^{E^2}_{\pm}$, we can write their (left) full symbols in the form
\[\sigma_L(N)(x,\zeta) = (p_m(x,\zeta)+ip_{m-1}(x,\zeta))\text{Id} + P_{m-1}(x,\zeta) + P_{m-2}(x,\zeta)\]
with $m = -1$, where $p_i\in S^i(T^*\mathbb{R}^3;\mathbb{R})$, $p_m$ is nonnegative and vanishes only at $\Sigma$, where it vanishes nondegenerately quadratically, $p_{m-1}$ satisfies a uniform nonzero bound on $\Sigma\backslash\{0\}$, and $P_i\in S^i(T^*\mathbb{R}^3;\text{Mat}_{3\times 3}(\mathbb{C}))$, with $P_{m-1}$ vanishing on $\Sigma$. 
Note that if the principal symbol of an operator vanishes quadratically on its characteristic set, then its subprincipal symbol is well-defined there, and hence it makes invariant sense to discuss the non-vanishing of these operators' subprincipal symbols.

We note that the form of the operators obtained above depended heavily on the explicit formulas for the eigenvalues of the elastic wave operator and their dependence on the material parameters. For elasticity with different kinds of symmetries, it is \emph{a priori} unclear what form the corresponding operators should take without looking at explicit expressions for the corresponding eigenvalues.

\subsection{Extended outline of the proofs of the main theorems}
\label{extended-outline-subsec}
With Theorem \ref{symbolthm} establishing the qualitative behavior of the operators $N^{\nu}_{\pm}$ and $\tilde{N}^{\nu}_{\pm}$, we now ask if there are ways of studying these operators, with a particular goal to obtain parametrices for these operators. We note that the theorem gives that the symbols of some of the operators $N^{\nu}_{\pm}$ are of ``parabolic type,'' i.e. are of the form $p_m+ip_{m-1}$, where $p_m\in S^m$ is nonnegative and vanishes nondegenerately on $\Sigma$ and $p_{m-1}\in S^{m-1}$ is real-valued and nondegenerate on $\Sigma$. A prototypical example of such an operator where $m=2$ is $|\xi|^2+i\tau$ on $T^*(\mathbb{R}^{n-1}_x\times\mathbb{R}_t)$, $\Sigma = \{(x,t,\xi,\tau)\,:\,\xi = 0\}$, which is the symbol of the heat operator on $\mathbb{R}^{n-1}$. Note that the heat operator, while not elliptic, still satisfy certain desirable properties; in particular it is hypoelliptic.

In Section \ref{invparsec}, we analyze symbols of ``inverse parabolic type'', i.e. of the form $q = 1/(p_m+ip_{m-1})$. Despite the lack of (order $m$) ellipticity of the parabolic symbol on $\Sigma$, its inverse does belong to a symbol calculus first studied by Boutet de Monvel in \cite{bdm}, consisting of symbols satisfying estimates of the form
\[|W^{\alpha}V^{\beta}q(x,\zeta)|\le C_{\alpha,\beta}|\zeta|^md_{\Sigma}^{k-|\alpha|}\quad\text{for }|\zeta|\gg 1\]
whenever $V^{\beta}$ is a product of homogeneous vector fields of degree $0$ on $T^*\mathbb{R}^n$ tangent to $\Sigma$ (e.g. a derivative in $x$ under appropriate coordinates) and $W^{\alpha}$ is product of homogeneous vector fields of degree $0$ not necessarily tangent to $\Sigma$ (e.g. a derivative in $\zeta$ times a power of $\zeta$), and $d_{\Sigma}^2 = |p|^2 + 1/|\zeta|$ where $p = (p_1,\dots,p_{\nu})$ are boundary defining functions for $\Sigma$ which are homogeneous of degree $0$ (so e.g. for $\Sigma = \{\zeta'=0\}$ we can take $d_{\Sigma}^2 = \frac{|\zeta'|^2}{|\zeta|^2}+\frac{1}{|\zeta|}$). For example, the inverse heat symbol satisfies the above estimates for $m=-2$ and $k=-2$. Symbols satisfying such estimates turn out to be invariantly defined regardless of coordinates, and the corresponding calculus based on such symbols enjoys properties similar to that of the $(1/2,0)$-type H\"ormander symbol calculus. In our case, recalling our assumption that $\overline{\xi}$ has integrable kernel, and hence $\Sigma$ is a line subbundle with integrable kernel, we can obtain even better properties about the calculus (such as a well-defined notion of principal symbol which is compatible with composition), which we develop in Section \ref{invparsec}. This is due to the fact that near any point we can take ``foliated'' local coordinates where $\overline{\xi}$ is locally a multiple of $dx_n$, and hence improved properties can follow if we quantize our symbol class only with respect to the foliated coordinates.

We use the symbol calculus developed in Section \ref{invparsec} to make recovery arguments in Section \ref{recovsec}, where we prove Theorems \ref{oneparam}, \ref{twoparam}, and \ref{func}. The main technical results presented in this section are the ``stability estimates'' mentioned earlier: in essence, in each situation we are trying to recover one or two parameters, with the other parameters either known or reducing to the parameters of interest via a functional relationship. The pseudolinearization formulas from \eqref{npseudo} provide operators (representable as a matrix-valued operator $N$) upon which applying the differences of the parameters of interest $r_{\nu}$ gives identically zero, assuming the coefficients give the same travel time data. We thus aim to obtain an estimate for general functions $u$ in terms of $N[u]$ for $u$ suitably supported, and in most cases we are able to obtain a stability estimate of the form
\begin{equation}
\label{stabest}
\|\nabla u\|_{L^2}\le C\left(\|N[u]\|_{H^2} + \|u\|_{H^{1/2}}\right).
\end{equation}
Thus, if the travel times agree, so that we have $N[r_{\nu}]\equiv 0$, then we obtain the estimate $\|\nabla r_{\nu}\|_{L^2}\le C\|r_{\nu}\|_{H^{1/2}}$, which combined with width assumptions discussed above gives injectivity. To obtain this stability estimate, we use Theorem \ref{symbolthm} to show that the corresponding operator $N$, while not elliptic in the classical sense, are elliptic in the modified calculus developed in Section \ref{invparsec}, and hence admit a parametrix in this class, which eventually leads to the desired stability estimate, thus leading to the main results.

\begin{remark}
\label{global-remark}
We remark that the argument is being made globally, i.e. unlike \cite{ti} we are not considering localizing via an artificial boundary. This is because attempting the analogous argument in this case, where the operators are now put in the framework of the Melrose scattering calculus to deal with the boundary, will result in operators whose subprincipal parts degenerate near the boundary, thus precluding the use of a symbol calculus analogous to that developed in Section \ref{invparsec}. A sketch of the relevant calculation is given in Section \ref{scat-op-sec}, specifically Proposition \ref{scatsub}. Further work in this direction would be desirable in order to fully apply the results in this work to the setting of \cite{ti}. Moreover, if a local result is obtained, then one may be able to obtain a similar result in the case of discontinuous parameters, for example by using the methods in \cite{faults} where the authors used a scattering control method together with results in the smooth case to make an argument for the piecewise smooth case, thus allowing an application to the fault monitoring problem where the parameters may be discontinuous.
\end{remark}

\section{Symbol computations}
\label{comp-sec}
In this section, we compute several quantities related to the symbols of the operators $N^{\nu}_{\pm}$ and $\tilde{N}^{\nu}_{\pm}$, with the purpose to prove Theorem \ref{symbolthm} regarding the structure of the symbols of these operators. In Section \ref{matrixweight} we analyze the symbol of operators arising from general matrix-weighted ray transforms. In Section \ref{hamdyn} we compute several quantities related to the Hamiltonian dynamics with respect to the $qP$ and $qSV$ wave speeds, as well as a justification of why the travel time data determines the lens relation data. In Section \ref{excomp}, we apply the general formulas obtained in Section \ref{matrixweight}, together with quantities computed in Section \ref{intro-sec} and Section \ref{hamdyn}, to prove Theorem \ref{symbolthm}, as well as to perform a more quantitative analysis of the symbols which will be useful in Section \ref{invparsec}.

\subsection{Symbol of operators associated to matrix-weighted ray transforms}
\label{matrixweight}

We analyze the symbol of operators arising from matrix-weighted ray transforms for trajectories arising from Hamiltonian dynamics. So let $p$ be a Hamiltonian function, i.e. function on $T^*\mathbb{R}^n$. Denote $(X(t,x_0,\xi_0),\Xi(t,x_0,\xi_0))$ the Hamiltonian flow with respect to $p$ starting at $(x_0,\xi_0)\in T^*\mathbb{R}^n$. Suppose that
\[I[u](x_0,\xi_0) = \int_{\mathbb{R}}{A(X(t,x_0,\xi_0),\Xi(t,x_0,\xi_0))u(X(t,x_0,\xi_0))\,dt}\]
and
\[L[v](x) = \int_{\mathbb{S}^{n-1}}{B(x,\omega)v(x,\xi(\omega;x))\,d\mathbb{S}^{n-1}(\omega)}\]
where $\xi(\omega;x)$ satisfies $\frac{\partial p}{\partial \xi}(x,\xi(\omega;x)) = \omega$. Then for $N = L\circ I$ we have
\begin{align*} N[u](x) &= \int_{\mathbb{S}^{n-1}\times\mathbb{R}}{B(x,\omega)A(X(t,x,\xi(\omega)),\Xi(t,x,\xi(\omega)))u(X(t,x,\xi(\omega)))\,dt\,d\omega} \\
&= \int_{\mathbb{S}^{n-1}\times\mathbb{R}\times\mathbb{R}^n}B(x,\omega)A(X(t,x,\xi(\omega)),\Xi(t,x,\xi(\omega)))\delta(X(t,x,\xi(\omega))-y)u(y)\,dy\,dt\,d\omega \\
&= (2\pi)^{-n}\int_{\mathbb{S}^{n-1}\times\mathbb{R}\times\mathbb{R}^n\times\mathbb{R}^n}B(x,\omega)A(X(t,x,\xi(\omega)),\Xi(t,x,\xi(\omega)))  e^{i(X(t,x,\xi(\omega))-y)\cdot\zeta}u(y)\,d\zeta\,dy\,dt\,d\omega
\end{align*}
and so the Schwartz kernel of $N$ is given by
\begin{align*} K(x,y) &= (2\pi)^{-n}\int_{\mathbb{S}^{n-1}\times\mathbb{R}\times\mathbb{R}^n}e^{i(X(t,x,\xi(\omega))-y)\cdot\zeta} B(x,\omega)A(X(t,x,\xi(\omega)),\Xi(t,x,\xi(\omega)))\,d\zeta\,dt\,d\omega \\
&= (2\pi)^{-n}\int_{\mathbb{R}^n}{e^{i(x-y)\cdot\zeta}\sigma_L(N)(x,\zeta)d\zeta}
\end{align*}
where
\[ \sigma_L(N)(x,\zeta) = \int_{\mathbb{S}^{n-1}\times\mathbb{R}}{e^{i(X(t,x,\xi(\omega))-x)\cdot\zeta}C(x,t,\omega)\,dt\,d\omega}, \quad
C(x,t,\omega) = B(x,\omega)A(X(t,x,\xi(\omega)),\Xi(t,x,\xi(\omega))).
\]
It follows that $N$ is a $\Psi$DO corresponding to the symbol $\sigma_L(N)$, provided that this is indeed a symbol. To show this is a symbol (and analyze its properties), we make a stationary phase argument.

Fixing $(x,\zeta)$, and letting $\gamma_{x,\omega}(t) = X(t,x,\xi(\omega;x))$, we have that
\[\gamma_{x,\omega}(t) - x = \omega t + \alpha(\omega;x)t^2 + O(t^3)\]
where we can calculate $\alpha(\omega;x)$ from Hamilton dynamics as
\begin{equation}
\begin{aligned} \alpha(\omega;x) &= \frac{1}{2}\frac{d^2}{dt^2}\Big|_{t=0}X(t,x,\xi(\omega)) \\
&= \frac{1}{2}\frac{d}{dt}\Big|_{t=0}\partial_{\xi}p(X(t,x,\xi(\omega)),\Xi(t,x,\xi(\omega))) \\
&=\frac{1}{2}(\dot{X}(0)\cdot\partial_x\partial_{\xi}p(x,\xi(\omega)) + \dot{\Xi}(0)\cdot\partial_{\xi}\partial_{\xi}p(x,\xi(\omega)))\\
&=\frac{1}{2}(\omega\cdot\partial_x\partial_{\xi}p(x,\xi(\omega)) - \partial_xp(x,\xi(\omega))\cdot\partial_{\xi}\partial_{\xi}p(x,\xi(\omega))).
\end{aligned}
\label{alpha}
\end{equation}
For $\zeta\ne 0$, if we decompose $\omega$ with respect to $\zeta$ as $\omega = \omega_{\parallel}\frac{\zeta}{|\zeta|} + \sqrt{1-\omega_{\parallel}^2}\omega'$, $\omega_{\parallel}\in[-1,1]$, $\omega'\in\zeta^{\perp}\cap \mathbb{S}^{n-1}$, then $(\omega_{\parallel},\omega')$ provide valid coordinates on $\mathbb{S}^{n-1}$ away from $\omega = \pm\frac{\zeta}{|\zeta|}$, and we have
\[\phi(t,\omega;x) := \frac{\zeta\cdot(\gamma_{x,\omega}(t)-x)}{|\zeta|} = \omega_{\parallel}t + \frac{\zeta}{|\zeta|}\cdot\alpha(\omega;x)t^2+ O(t^3).\]
We can also view $\phi$ as a function of $t$ and $\omega_{\parallel}$ (with the remaining coordinate $\omega'$ a parameter). Since
\begin{align*} 
\partial_t\phi(t,\omega;x) &= \omega_{\parallel} + 2\frac{\zeta}{|\zeta|}\cdot\alpha(\omega;x)t+O(t^2)\\
\partial_{\omega_{\parallel}}\phi(t,\omega;x) &= t + O(t^2),
\end{align*}
it follows that (for sufficiently small $t$) the only critical points of $\phi$ are at $\{t=0,\omega_{\parallel} = 0\}$, and those critical points are non-degenerate. 

We thus write
\[\phi(t,\omega;x) = \omega_{\parallel}t + \frac{\zeta}{|\zeta|}\cdot\alpha(0,\omega';x)t^2 + R(t,\omega;x)\]
where $R(t,\omega;x) = O(\omega_{\parallel}t^2) + O(t^3)$, so that $\phi$ is written as a quadratic form in $(t,\omega_{\parallel})$ (with coefficients depending on $\omega'$) plus a remainder. Let $\phi_s(t,\omega;x) = \omega_{\parallel}t+\frac{\zeta}{|\zeta|}\cdot\alpha(0,\omega';x)t^2+sR(t,\omega;x)$. We now let
\begin{equation}
I(x,\zeta;s) = \int_{\mathbb{S}^{n-1}}\int_{\mathbb{R}}{e^{i|\zeta|\phi_s(t,\omega;x)}C(x,t,\omega)\,dt\,d\mathbb{S}^{n-1}(\omega)}.\label{I(s)}
\end{equation}
Then $\sigma_L(N)(x,\zeta) = I(x,\zeta;1)$, and for any $N$ we can write $I(x,\zeta;1) = \sum_{j=0}^{N-1}{\frac{I^{(j)}(x,\zeta;0)}{j!}} + \frac{I^{(N)}(x,\zeta;s)}{N!}$ for some $s\in(0,1)$ by Taylor's theorem. We can thus study the asymptotics of terms of the form $I^{(j)}(x,\zeta;0)$ to analyze the asymptotics of $\sigma_L(N)(x,\zeta)$. (Note that we can also insert a cutoff in $t$ which is identically $1$ in a neighborhood of $0$ without affecting asymptotics, since on the difference we can integrate by parts using the ``no conjugate points'' assumption of \eqref{noconjpts}. In particular, we can insert a cutoff in $t$ such that $\phi_s$ has no critical points in $(t,\omega_{\parallel})$ besides $(t,\omega_{\parallel}) = (0,0)$ for all $s\in[0,1]$ for all $t$ in the support of the cutoff, so that in effect we are free to assume that $\phi_s$ really has no critical points aside from $(t,\omega_{\parallel}) = (0,0)$.)

We first compute the asymptotics of
\[ I(x,\zeta;0) = \int_{\mathbb{S}^{n-1}}\int_{\mathbb{R}}{e^{i|\zeta|\omega_{\parallel}t+\frac{\zeta}{|\zeta|}\cdot\alpha(0,\omega';x)t^2}C(x,t,\omega)\,dt\,d\omega}.\]
We change to variables $\omega = (\omega_{\parallel},\omega')$, with $d\mathbb{S}^{n-1}(\omega) = {(1-\omega_{\parallel}^2)^{(n-3)/2}}\,d\omega_{\parallel}\,d\mathbb{S}^{n-2}(\omega')$. Letting $\tilde{C}(x,t,\omega_{\parallel},\omega') = {(1-\omega_{\parallel}^2)^{(n-3)/2}C(t,x,\omega)}$, we can rewrite the above equation as
\begin{align*}&I(x,\zeta;0) = \\
&\int_{\zeta^{\perp}\cap\mathbb{S}^{n-1}}{\left(\int_{-1}^1\int_{\mathbb{R}}{e^{i|\zeta|\omega_{\parallel}t+\frac{\zeta}{|\zeta|}\cdot\alpha(0,\omega';x)t^2}\tilde{C}(x,t,\omega_{\parallel},\omega')\,dt\,d\omega_{\parallel}}\right)\,d\mathbb{S}^{n-2}(\omega')}.
\end{align*}
The phase can thus be written as $|\zeta|\langle Q(\omega',x)(t,\omega_{\parallel}),(t,\omega_{\parallel})\rangle$ where, with respect to the coordinates $(t,\omega_{\parallel})$, $Q(\omega',x)$ is the matrix
\[Q(\omega',x) = \begin{pmatrix} 2\frac{\zeta}{|\zeta|}\cdot\alpha(0,\omega';x) & 1 \\ 1 & 0 \end{pmatrix}.\]
Thus $\det Q(\omega',x) = -1$, $\text{sgn }Q(\omega',x) = 0$, and
\[\quad Q(\omega',x)^{-1} = \begin{pmatrix} 0 & 1 \\ 1 & -2\frac{\zeta}{|\zeta|}\cdot\alpha(0,\omega';x) \end{pmatrix}\]
and hence by the method of stationary phase (cf. \cite{alpdo1}) we have
\begin{align*}&\int_{\zeta^{\perp}\cap\mathbb{S}^{n-1}}{\left(\int_{-1}^1\int_{\mathbb{R}}{e^{i|\zeta|\omega_{\parallel}t+\frac{\zeta}{|\zeta|}\cdot\alpha(0,\omega')t^2}\tilde{C}(x,t,\omega_{\parallel},\omega')\,dt\,d\omega_{\parallel}}\right)\,d\mathbb{S}^{n-2}(\omega')} \\
&= a_{-1}|\zeta|^{-1} + a_{-2}|\zeta|^{-2} + O(|\zeta|^{-3})
\end{align*}
where
\[a_{-1} = 2\pi\int_{\zeta^{\perp}\cap\mathbb{S}^{n-1}}{\tilde{C}|_{t=0,\omega_{\parallel}=0}\,d\mathbb{S}^{n-2}(\omega')} = 2\pi\int_{\zeta^{\perp}\cap\mathbb{S}^{n-1}}{C(x,0,\omega')\,d\mathbb{S}^{n-2}(\omega')}\]
and
\begin{align*} a_{-2} &= 2\pi i \int_{\zeta^{\perp}\cap\mathbb{S}^{n-1}}{\left.\left[\left(\partial_t\partial_{\omega_{\parallel}} - \frac{\zeta}{|\zeta|}\cdot\alpha(0,\omega')\partial^2_{\omega_{\parallel}}\right)\tilde{C}\right]\middle|\right._{t=0,\omega_{\parallel} = 0}\,d\mathbb{S}^{n-2}(\omega')}\\
&=2\pi i\int_{\zeta^{\perp}\cap\mathbb{S}^{n-1}}\left(\partial_t\partial_{\omega_{\parallel}} - \frac{\zeta}{|\zeta|}\cdot\alpha(0,\omega')\partial^2_{\omega_{\parallel}}\right)C(x,0,\omega')\\
&\phantom{=2\pi i}- (n-3)\frac{\zeta}{|\zeta|}\cdot\alpha(0,\omega';x)C(x,0,\omega')\,d\mathbb{S}^{n-2}(\omega').
\end{align*}
(Note that the two quantities above depend only on $\frac{\zeta}{|\zeta|}$.) In particular, this shows that $\sigma_L(N)$ is a symbol of order (at most) $-1$.

Now suppose that $\zeta$ has the property that $\omega\in\zeta^{\perp}\implies C(x,0,\omega) = 0$. Then for such $\zeta$, we have $a_{-1} = 0$, and $a_{-2}$ has the simpler formula
\[a_{-2} = 2\pi i\int_{\zeta^{\perp}\cap\mathbb{S}^{n-1}}{\left(\partial_t\partial_{\omega_{\parallel}} - \frac{\zeta}{|\zeta|}\cdot\alpha(0,\omega';x)\partial^2_{\omega_{\parallel}}\right)C(x,0,\omega')\,d\mathbb{S}^{n-2}(\omega')}.\]
We now consider $C$ such that $C(x,t,\omega) = F(x,t,\omega)g^2(x,t,\omega)$, where $F$ is matrix-valued but $F|_{t=0}$ is scalar-valued, and $g$ is scalar-valued and has the property that there exists a nowhere vanishing 1-form $\xi_0(x)$ such that for every $x$ we have
\[\{\omega\in\mathbb{S}^{n-1}\,:\,g(x,0,\omega) = 0\} = \ker\xi_0(x)\cap\mathbb{S}^{n-1}.\]
Then $a_{-1}$ is scalar-valued, and furthermore $a_{-1}$ vanishes when $\zeta$ is a multiple of $\xi_0(x)$. (If we furthermore assume $F|_{t=0}$ is bounded away from zero, i.e. uniformly positive or negative, then $a_{-1}$ is always nonnegative/nonpositive and vanishes only on the span of $\xi_0$.) Furthermore, the expression for the subprincipal symbol at a multiple of $\xi_0(x)$ can be rewritten as well: indeed for $\omega'$ annihilated by $\xi_0$ we have
\[\partial_t\partial_{\omega_{\parallel}}C(x,0,\omega') = \partial_t\partial_{\omega_{\parallel}}[Fg^2](x,0,\omega') = [2F\partial_tg\partial_{\omega_{\parallel}}](x,0,\omega')\]
since all other terms in Leibniz's rule would contain a factor of $g$ which vanishes when $t=0$ and $\omega'$ is annihilated by $\xi_0$. Similarly
\[\partial^2_{\omega_{\parallel}}C(x,0,\omega') = 2F(x,0,\omega')(\partial_{\omega_{\parallel}}g(x,0,\omega'))^2\]
and hence for $\zeta$ parallel to $\xi_0$ we have
\begin{align*}
a_{-2} &= 2\pi i\int_{\xi_0^{\perp}\cap\mathbb{S}^{n-1}}2F(x,0,\omega')\left(\partial_tg(x,0,\omega')\partial_{\omega_{\parallel}}g(x,0,\omega') \right.\\
&\phantom{=2\pi i}\left.- \frac{\zeta}{|\zeta|}\cdot\alpha(\omega')(\partial_{\omega_{\parallel}}g(x,0,\omega'))^2\right)\,d\mathbb{S}^{n-2}(\omega').
\end{align*}
In particular, the expression for the subprincipal part depends only on the values of the prefactor $F$ and not its derivatives.

To recap, we have analyzed the asymptotic expansion of the term $I(x,\zeta;0)$ with the coefficients of $|\zeta|^{-1}$ and $|\zeta|^{-2}$ given by the above expressions. We now show the remaining terms in the Taylor expansion $I(x,\zeta;1) = \sum_{j=0}^{N-1}{\frac{I^{(j)}(x,\zeta;0)}{j!}} + \frac{I^{(N)}(x,\zeta;s)}{N!}$ do not contribute to the asymptotic expansion. From \eqref{I(s)}, we have
\[I^{(j)}(x,\zeta;s) = (i|\zeta|)^j\int_{\mathbb{S}^{n-1}}\int_{\mathbb{R}}{e^{i|\zeta|\phi_s(t,\omega)}R(t,\omega;x)^jC(x,t,\omega)\,dt\,d\mathbb{S}^{n-1}(\omega)}\]
with $R(t,\omega;x) = O(\omega_{\parallel}t^2)+O(t^3)$. Since this vanishes cubically at the set of critical points $\{t=0,\omega_{\parallel} = 0\}$, we have that $I^{(2j)}(x,\zeta;s) = O(|\zeta|^{-j})$ for all $s\in(0,1)$ (cf. \cite{alpdo1}, Theorem 7.7.1 or 7.7.5). So we set $N = 6$ so that $I^{(6)}(x,\zeta;s)/6! = O(|\zeta|^{-3})$ and analyze $I^{(j)}(x,\zeta;0)$ for $1\le j\le 5$. We have
\[I^{(j)}(x,\zeta;0) = (i|\zeta|)^{j}\int_{\mathbb{S}^{n-1}}\int_{\mathbb{R}}{e^{i|\zeta|\phi_0(t,\omega)}R(t,\omega;x)^jC(x,t,\omega)\,dt\,d\mathbb{S}^{n-1}(\omega)}.\]
The stationary phase formula gives an asymptotic expansion where the coefficients are obtained by integrating appropriate powers of the differential operator $-i\langle Q^{-1}D,D\rangle/2$ applied to the amplitude $R^jC$, where $-\langle Q^{-1}D,D\rangle/2 = {\partial_t\partial_{\omega_{\parallel}} - \frac{\zeta}{|\zeta|}\cdot\alpha(0,\omega')\partial^2_{\omega_{\parallel}}}$. In particular, this differential operator is a sum of terms with at most one $t$ derivative, and thus $(\langle Q^{-1}D,D\rangle)^k$ is a sum of terms each with at most $k$ derivatives in $t$. This matters since $R(t,\omega;x) = O(t^2)$ (i.e. $t^2$ times smooth), so applying differential operators with at most one derivative in $t$ will only reduce the power of $t$ by one (and thus the term vanishes to higher order than initially expected).  The coefficients of $|\zeta|^{-1}$ and $|\zeta|^{-2}$ in the asymptotic expansion of $I^{(j)}$ are the coefficients of $|\zeta|^{-1-j}$ and $|\zeta|^{-2-j}$ obtained in the stationary phase expansion of the integral $\int_{\mathbb{S}^{n-1}}\int_{\mathbb{R}}{e^{i|\zeta|\phi_0(t,\omega)}R(t,\omega;x)^jC(x,t,\omega)\,dt\,d\mathbb{S}^{n-1}(\omega)}$, which in turn is a multiple of
\[\int_{\{t=0,\omega_{\parallel}=0\}}{\left(\langle Q^{-1}D,D\rangle\right)^k[R^jC](0,\omega')\,d\mathbb{S}^{n-2}(\omega')},\quad k=j\text{ or }j+1.\]
Since $[R^jC](t,\omega) = O(t^{2j})$, we have that $\left(\langle Q^{-1}D,D\rangle\right)^k[R^jC](t,\omega) = O(t^{2j-k})$ since at most $k$ derivatives in $t$ are applied, and all other derivatives fall on the smooth prefactor which does not affect decay. In particular, if $k = j$ and $j\ge 1$, we have that $2j-k>0$, and hence $\left(\langle Q^{-1}D,D\rangle\right)^k[R^jC]$ vanishes at the critical set, i.e. the above integrals and the corresponding coefficients are zero. Thus the derivatives do not contribute to the principal symbol at all. If $k = j+1$ and $j\ge 2$, we also have that $2j-k>0$, and thus the derivatives of order $2$ and higher do not contribute to the subprincipal symbol at all. Thus to recap:
\[
\text{the principal symbol of }N\text{ is given by the }|\zeta|^{-1}\text{ term in the asymptotic expansion of }I(x,\zeta;0)
\]
and
\begin{gather*}
\text{the subprincipal symbol of }N\text{ is given by the }|\zeta|^{-2}\text{ term}\\
\text{in the asymptotic expansion of }I(x,\zeta;0)+I^{(1)}(x,\zeta;0).
\end{gather*}
Finally, if $C$ is of the form $Fg^2$ where $F$ and $g$ are as above, then the subprincipal contribution of $I^{(1)}(0)$ will in fact vanish when $\zeta$ is a multiple of $\xi_0$. Indeed, in this case we have that $CR = Fg^2R$ vanishes quintically at the critical set when $\zeta$ is a multiple of $\xi_0$; hence applying the fourth order differential operator $(\langle Q^{-1}D,D\rangle)^2$ will still have it vanish at the critical set. Thus $I^{(1)}(0)$ does not contribute to the subprincipal symbol at the characteristic set where $\zeta$ is a multiple of $\xi_0$.

Thus, to recap, the only contribution to the principal symbol of $N$ is from $I(0)$, while the only contribution to the subprincipal symbol on $\Sigma$ is also from $I(0)$. Hence, we have $\sigma_L(N)(x,\zeta) = \sigma_{-1}(N)(x,\zeta) + \sigma_{-2}(N)(x,\zeta) + O(|\zeta|^{-3})$, where
\begin{equation}
\sigma_{-1}(N)(x,\zeta) = |\zeta|^{-1}\cdot 2\pi\int_{\zeta^{\perp}\cap\mathbb{S}^{n-1}}{C(x,0,\omega')\,d\mathbb{S}^{n-2}(\omega')} \label{psymgenfinal}
\end{equation}
and, when $C = Fg^2$ as above, for $\zeta$ a multiple of $\xi_0$ we have
\begin{equation}
\begin{aligned}
\sigma_{-2}(N)(x,\zeta) &= |\zeta|^{-2}\cdot 2\pi i\int_{\xi_0^{\perp}\cap\mathbb{S}^{n-1}}2F(x,0,\omega')\cdot\left[\partial_tg(x,0,\omega')\partial_{\omega_{\parallel}}g(x,0,\omega') \right.\\
&\phantom{=|\zeta|^{-2}\cdot 2\pi i}\left.- \frac{\zeta}{|\zeta|}\cdot\alpha(\omega')(\partial_{\omega_{\parallel}}g(x,0,\omega'))^2\right]\,d\mathbb{S}^{n-2}(\omega'). 
\end{aligned}
\label{subsymgenfinal}
\end{equation}

\subsection{Hamiltonian dynamics}
\label{hamdyn}

We now compute several quantities related to the Hamiltonian dynamics of the wave speeds which arise in computing the principal and subprincipal symbols of $N^{\nu}_{\pm}$ and $\tilde{N}^{\nu}_{\pm}$ using \eqref{psymgenfinal} and \eqref{subsymgenfinal} derived above.

For a fixed point $x\in\mathbb{R}^3$, if we perform an orthogonal change of coordinates so that $\overline{\xi}(x) = dx_3|_{x}$, then at the point $x$ we have
\begin{align*} G_{\pm}(x,\xi) &= (a_{11}(x)+a_{55}(x))|\xi'|^2+(a_{33}(x)+a_{55}(x))\xi_3^2 \\
&\pm\sqrt{((a_{11}(x)-a_{55}(x))|\xi'|^2+(a_{33}(x)-a_{55}(x))\xi_3^2)^2-4E^2(x)|\xi'|^2\xi_3^2}
\end{align*}
where $+$ denotes the $qP$ speed, $-$ denotes the $qSV$ speed, and $|\xi'|^2 = \xi_1^2+\xi_2^2$. From this, we have that
\[\partial_{\xi}G_{\pm}(x,\xi) = 2\begin{pmatrix}(a_{11}+a_{55})\xi_1 \pm \frac{((a_{11}-a_{55})|\xi'|^2+(a_{33}-a_{55})\xi_3^2)(a_{11}-a_{55})\xi_1-4E^2|\xi_3|^2\xi_1}{\sqrt{((a_{11}-a_{55})|\xi'|^2+(a_{33}-a_{55})\xi_3^2)^2-4E^2|\xi'|^2|\xi_3|^2}} \\ 
(a_{11}+a_{55})\xi_2 \pm \frac{((a_{11}-a_{55})|\xi'|^2+(a_{33}-a_{55})\xi_3^2)(a_{11}-a_{55})\xi_2-4E^2|\xi_3|^2\xi_2}{\sqrt{((a_{11}-a_{55})|\xi'|^2+(a_{33}-a_{55})\xi_3^2)^2-4E^2|\xi'|^2|\xi_3|^2}}\\
(a_{33}+a_{55})\xi_3 \pm \frac{((a_{11}-a_{55})|\xi'|^2+(a_{33}-a_{55})\xi_3^2)(a_{33}-a_{55})\xi_3-4E^2|\xi'|^2\xi_3}{\sqrt{((a_{11}-a_{55})|\xi'|^2+(a_{33}-a_{55})\xi_3^2)^2-4E^2|\xi'|^2|\xi_3|^2}}
\end{pmatrix}.\]
In particular if $\xi_3 = 0$, i.e. $\xi$ is orthogonal to $\overline{\xi}$, then
\begin{equation}
\label{wavespdorth}
G_{\pm}(x,\xi) = 2a_{\pm}(x)|\xi|^2
\end{equation}
where $a_+ = a_{11}$ and $a_- = a_{55}$, and
\begin{equation}
\label{wavespdgradorth}
\partial_{\xi}G_{\pm}(x,\xi) = 4a_{\pm}(x)\begin{pmatrix} \xi_1 \\ \xi_2 \\ 0 \end{pmatrix} = 4a_{\pm}(x)\xi.
\end{equation}
This shows that $\partial^2_{\xi_i\xi_j}G_{\pm}(x,\xi) = 0$ for $i\ne j$ and $\xi_3 = 0$, i.e. the Hessian $\partial^2_{\xi}G_{\pm}$ is diagonal with respect to the orthogonal coordinates at $\xi$ with $\xi_3 = 0$. In addition, we have $\omega_3 = 0\iff\xi_3(\omega) = 0$, in which case we have we have $\xi(\omega) = \frac{\omega}{4a_{\pm}}$. In other words, $\omega$ is annihilated by $\overline{\xi}$ (=$dx_3$ at $x$) if and only if $\xi(\omega)\cdot\overline{\xi} = 0$, in which case $\xi(\omega)$ is a multiple of $\omega$. Furthermore, taking the $\xi_3$ derivative of $\partial_{\xi_3}G_{\pm}$ and evaluating at $\xi_3 = 0$ yields
\[\partial^2_{\xi_3\xi_3}G_{\pm}(x,\xi) = 2\left[(a_{33}(x)+a_{55}(x))\pm\left(a_{33}(x)-a_{55}(x) - \frac{4E^2(x)}{a_{11}(x)-a_{55}(x)}\right)\right].\]
Thus, in general the value $(\partial^2_{\xi}G_{\pm}(x,\xi))\cdot(\overline{\xi},\overline{\xi})$ will equal the above value for any $\xi$ orthogonal to $\overline{\xi}(x)$; note that the value is independent of $\xi$, as long as it is orthogonal to $\overline{\xi}(x)$. We thus let\footnote{$h_{\pm}$ standing for the Hessian of $G_{\pm}$} $h_{\pm}(x)$ denote this value. In other words,
\[h_{\pm}(x) =\left\{\begin{matrix} 4\left(a_{33}(x) - \frac{E^2(x)}{a_{11}(x)-a_{55}(x)}\right) &\text{ for }qP\,(+) \\ 4\left(a_{55}(x) + \frac{E^2(x)}{a_{11}(x)-a_{55}(x)}\right) &\text{ for }qSV\,(-)\end{matrix}\right..\]
Notice that if the elasticity is actually isotropic (i.e. $a_{11} = a_{33} = \lambda+2\mu$, $a_{55} = \mu$, $E^2 = 0$), then $h_+ = 4(\lambda+2\mu) = 4a_{11}$ and $h_{-} = 4\mu = 4a_{55}$, i.e. $h_{\pm} = 4a_{\pm}$.

In computing the subprincipal symbol, we will need to calculate several quantities related to these dynamics. The subprincipal symbol will end up only being relevant when $\zeta$ is a multiple of $\overline{\xi}(x)$, and in such cases we integrate over $\omega\in\mathbb{S}^2$ which are annihilated by $\overline{\xi}(x)$. Thus, for the \emph{rest of this section}, we assume that $\omega\in\overline{\xi}(x)^{\perp}\cap\mathbb{S}^2$, and all subsequent results in this section are valid for such $\omega$.

From \eqref{subsymgenfinal}, we see that we should calculate $\overline{\xi}\cdot\alpha(\omega)$, as well as $\partial_t\xi_T(0,\omega)$ and $\partial_{\omega_{\parallel}}\xi_T(0,\omega)$, with $\xi_T$ taking the role of $g$ in \eqref{subsymgenfinal}. We start with computing $\partial_t\xi_T$. Recall that
\[\xi_T(x,t,\omega) := \overline{\xi}(X(t,x,\xi(\omega)))\cdot\Xi(t,x,\xi(\omega)).\]
Thus we have
\begin{align*} &\partial_t\xi_T(x,0,\omega) \\
&= \partial_t|_{t=0}[\overline{\xi}(X(t,x,\xi(\omega)))]\cdot\Xi(0,x,\xi(\omega)) + \overline{\xi}(X(0,x,\xi(\omega)))\cdot\partial_t\Xi(0,x,\xi(\omega))  \\
&= [(\partial_tX(0,x,\xi(\omega))\cdot\partial_x)\overline{\xi}(x)]\cdot\xi(\omega) - \overline{\xi}(x)\cdot\partial_xG_{\pm}(x,\xi(\omega)) \\
&=[(\partial_{\xi}G_{\pm}(x,\xi(\omega)))\cdot\partial_x\overline{\xi}(x)]\cdot\xi(\omega) - \overline{\xi}(x)\cdot\partial_xG_{\pm}(x,\xi(\omega)) \\
&=\frac{[(\omega\cdot\partial_x)\overline{\xi}(x)]\cdot\omega}{4a_{\pm}(x)} - \overline{\xi}(x)\cdot\partial_xG_{\pm}(x,\xi(\omega)).
\end{align*}
The last line follows by noting that $\partial_{\xi}G_{\pm}(x,\xi(\omega)) = \omega$ by definition and $\xi(\omega) = \omega/(4a_{\pm}(x))$ for $\omega$ annihilated by $\overline{\xi}(x)$. To compute $\overline{\xi}(x)\cdot\partial_xG_{\pm}(x,\xi(\omega))$, we consider a path $(x(t),\xi(t))$ satisfying $x(0) = x$, $\dot{x}(0) = \overline{\xi}(x)$, $\xi(0) = \xi(\omega)$, and $\xi(t)\cdot\overline{\xi}(x(t)) = 0$. Differentiating the last equation and evaluating at $t=0$ yields
\[\dot\xi(0)\cdot\overline{\xi}(x) + \xi(0)\cdot(\overline{\xi}(x)\cdot\partial_x\overline{\xi}(x)) = 0;\]
notice that actually $\overline{\xi}(x)\cdot\partial_x\overline{\xi}(x) = 0$ since $\overline{\xi}$ has constant norm, and hence $\dot\xi(0)\cdot\overline{\xi}(x) = 0$. Moreover, since $\xi(t)$ is orthogonal to $\overline{\xi}(x(t))$ along this path, it follows from \eqref{wavespdorth} that
\[G_{\pm}(x(t),\xi(t)) = 2a_{\pm}(x(t))|\xi(t)|^2.\]
Differentiating this equation at $t=0$ yields
\[\overline{\xi}\cdot\partial_xG_{\pm} + \dot\xi(0)\cdot\partial_{\xi}G_{\pm} = 2\overline{\xi}\cdot\partial_xa_{\pm}|\xi|^2 + 4a_{\pm}\dot\xi(0)\cdot\xi;\]
the terms $\dot\xi(0)\cdot\partial_{\xi}G_{\pm}$ and $4a_{\pm}\dot\xi(0)\cdot\xi$ cancel since $\partial_{\xi}G_{\pm} = 4a_{\pm}\xi$ for $\xi$ orthogonal to $\overline{\xi}$. Thus we have
\[\overline{\xi}(x)\cdot\partial_xG_{\pm}(x,\xi) = 2(\overline{\xi}(x)\cdot\partial_x)a_{\pm}(x)|\xi|^2\]
when $\xi$ is orthogonal to $\overline{\xi}$. In particular,
\[\overline{\xi}(x)\cdot\partial_xG_{\pm}(x,\xi(\omega)) = 2(\overline{\xi}(x)\cdot\partial_x)a_{\pm}(x)|\xi(\omega)|^2 = \frac{\overline{\xi}(x)\cdot\partial_xa_{\pm}(x)}{8a_{\pm}(x)^2}\]
since $|\xi(\omega)|^2 = |\omega/(4a_{\pm}(x))|^2 = 1/(16a_{\pm}(x)^2)$ for $\omega\in\overline{\xi}(x)^{\perp}\cap\mathbb{S}^2$. Thus, we have
\begin{equation}
\partial_t\xi_T(x,0,\omega) = \frac{[(\omega\cdot\partial_x)\overline{\xi}(x)]\cdot\omega}{4a_{\pm}(x)} - \frac{\overline{\xi}(x)\cdot\partial_xa_{\pm}(x)}{8a_{\pm}(x)^2}. \label{partialtxit}
\end{equation}
Note that the term $(\omega\cdot\partial_x)\overline{\xi}(x)\cdot\omega$ is a curvature term: in fact, if we assume that $\overline{\xi}(x) = \frac{df}{|df|}$ for some $f$ so that $f$ labels the ``layers'' of the transverse isotropy, then this term is the second fundamental form of the layer (viewed as a surface in $\mathbb{R}^3$) applied to $(\omega,\omega)$.

We now consider $\partial_{\omega_{\parallel}}\xi_T$, recalling that we have $\zeta$ parallel to $\overline{\xi}$. Writing $\zeta = s\overline{\xi}$, we have $\partial_{\omega_{\parallel}} = \text{sgn}(s)\overline{\xi}\cdot\partial_{\omega}$. From the definition $\partial_{\xi}G_{\pm}(x,\xi(\omega)) = \omega$, taking a directional $\omega$ derivative in the direction of $\overline{\xi}$ gives
\[[(\overline{\xi}(x)\cdot\partial_{\omega}\xi)\cdot\partial_{\xi}]\partial_{\xi}G_{\pm}(x,\xi(\omega)) = \overline{\xi}\implies \partial^2_{\xi}G_{\pm}(x,\xi(\omega))\cdot((\overline{\xi}(x)\cdot\partial_{\omega}\xi),\overline{\xi}) = 1.\]
By the diagonalization of the Hessian $\partial^2_{\xi}G_{\pm}$, we have that
\begin{align*}\partial^2_{\xi}G_{\pm}(x,\xi(\omega))\cdot((\overline{\xi}(x)\cdot\partial_{\omega}\xi),\overline{\xi}) &= \partial^2_{\xi}G_{\pm}(x,\xi(\omega))\cdot(\overline{\xi},\overline{\xi})\cdot((\overline{\xi}(x)\cdot\partial_{\omega}\xi)\cdot\overline{\xi}) \\
&= h_{\pm}(x)[(\overline{\xi}\cdot\partial_{\omega})\xi\cdot\overline{\xi}].
\end{align*}
Since
\[(\overline{\xi}(x)\cdot\partial_{\omega})\xi(\omega)\cdot\overline{\xi}(x) = \overline{\xi}(x)\cdot\partial_{\omega}[\xi(\omega)\cdot\overline{\xi}(x)] = \overline{\xi}(x)\cdot\partial_{\omega}\xi_T(\omega)\]
it follows that $\overline{\xi}\cdot\partial_{\omega}\xi_T(\omega) = 1/h_{\pm}(x)$. Hence
\begin{equation}
\partial_{\omega_{\parallel}}\xi_T(\omega) = \frac{\text{sgn}(s)}{h_{\pm}(x)}\quad\text{for }\zeta = s\overline{\xi}(x). \label{partialparallel}
\end{equation}
We now compute the terms $\overline{\xi}\cdot[(\omega\cdot\partial_x)\partial_{\xi}G_{\pm}]$ and $\overline{\xi}\cdot(\partial_xG_{\pm}\cdot\partial^2_{\xi}G_{\pm})$. To calculate the first term, we proceed similarly as above, and we now consider a path $(x(t),\xi(t))$ with $x(0) = x$, $\dot{x}(0) = \omega$, $\xi(0) = \xi(\omega)$, and $\xi(t)\cdot\overline{\xi}(x(t)) = 0$. Taking the derivative of the last equation at $t=0$ yields
\[\dot\xi(0)\cdot\overline{\xi}(x) + (\omega\cdot\partial_x\overline{\xi})(x)\cdot\xi(\omega) = 0.\]
Furthermore, from \eqref{wavespdgradorth} we have
\[\partial_{\xi}G_{\pm}(x(t),\xi(t)) = 4a_{\pm}(x(t))\xi(t),\]
since along the path we have that $\xi(t)$ is orthogonal to $\overline{\xi}(x(t))$, and hence taking the derivative at $0$ yields
\[\omega\cdot\partial_x\partial_{\xi}G_{\pm} + \dot\xi(0)\cdot\partial_{\xi}\partial_{\xi}G_{\pm} = 4(\omega\cdot\partial_xa_{\pm})\xi + 4a_{\pm}\dot\xi(0).\]
Thus we have
\[\overline{\xi}(x)\cdot(\omega\cdot\partial_x\partial_{\xi}G_{\pm}) = -(\dot\xi(0)\cdot\partial_{\xi}\partial_{\xi}G_{\pm}\cdot\overline{\xi} - 4(\omega\cdot\partial_xa_{\pm})\xi\cdot\overline{\xi} - 4a_{\pm}\dot\xi(0)\cdot\overline{\xi}).\]
By the diagonalization of $\partial^2_{\xi}G_{\pm}$, we have 
\[\dot\xi(0)\cdot\partial_{\xi}\partial_{\xi}G_{\pm}\cdot\overline{\xi} = {h_{\pm}(x)(\dot\xi(0)\cdot\overline{\xi})}.\]
Substituting $\xi\cdot\overline{\xi} = 0$ and $\dot\xi(0)\cdot\overline{\xi} = -(\omega\cdot\partial_x\overline{\xi})\cdot\xi$ gives
\begin{align*} \overline{\xi}(x)\cdot(\omega\cdot\partial_x\partial_{\xi}G_{\pm}(x,\xi(\omega))) &= ((\omega\cdot\partial_x)\overline{\xi}(x)\cdot\xi(\omega))(h_{\pm}(x)-4a_{\pm}(x)) \\
&= [(\omega\cdot\partial_x)\overline{\xi}(x)\cdot\omega]\left(\frac{h_{\pm}(x)}{4a_{\pm}(x)}-1\right).
\end{align*}
(Note that this quantity vanishes in the case of isotropic elasticity.) For the term $\overline{\xi}\cdot(\partial_xG_{\pm}\cdot\partial^2_{\xi}G_{\pm})$, we note that the diagonalization of the Hessian $\partial^2_{\xi}G_{\pm}$ implies that
\begin{align*} \overline{\xi}(x)\cdot(\partial_xG_{\pm}\cdot\partial^2_{\xi}G_{\pm})(x,\xi(\omega)) &= h_{\pm}(x)(\overline{\xi}(x)\cdot\partial_x)G_{\pm}(x,\xi(\omega)) \\
&= \frac{h_{\pm}(x)(\overline{\xi}(x)\cdot\partial_x)a_{\pm}(x)}{8a_{\pm}(x)^2}.
\end{align*}
Thus, we have
\begin{align*} \overline{\xi}(x)\cdot\alpha(\omega) &= \frac{1}{2}\overline{\xi}(x)\cdot\left(\omega\cdot\partial_x\partial_{\xi}G_{\pm} - \partial_xG_{\pm}\cdot\partial^2_{\xi}G_{\pm})(x,\xi(\omega)\right) \\
&=\frac{1}{2}\left([(\omega\cdot\partial_x)\overline{\xi}(x)\cdot\omega]\left(\frac{h_{\pm}(x)}{4a_{\pm}(x)}-1\right)-\frac{h_{\pm}(x)(\overline{\xi}(x)\cdot\partial_x)a_{\pm}(x)}{8a_{\pm}(x)^2}\right).
\end{align*}
Combining the above calculations yields, for $\zeta = s\overline{\xi}(x)$,
\begin{equation}
\begin{aligned}
&\partial_t\xi_T(x,0,\omega)\partial_{\omega_{\parallel}}\xi_T(x,0,\omega) - \frac{\zeta}{|\zeta|}\cdot\alpha(\omega)(\partial_{\omega_{\parallel}}\xi_T(x,0,\omega))^2\\
&=\partial_{\omega_{\parallel}}\xi_T(x,\omega)\left(\partial_t\xi_T(x,0,\omega) - \text{sgn}(s)\overline{\xi}\cdot\alpha(\omega)\partial_{\parallel}\xi_T(x,\omega)\right) \\
&=\frac{\text{sgn}(s)}{h_{\pm}(x)}\left[\frac{[(\omega\cdot\partial_x)\overline{\xi}(x)]\cdot\omega}{4a_{\pm}(x)} - \frac{\overline{\xi}(x)\cdot\partial_xa_{\pm}(x)}{8a_{\pm}(x)^2} \right.\\
&\hphantom{=}\left.- \text{sgn}(s)\cdot\frac{1}{2}\left([(\omega\cdot\partial_x)\overline{\xi}(x)\cdot\omega]\left(\frac{h_{\pm}(x)}{4a_{\pm}(x)}-1\right)-\frac{h_{\pm}(x)(\overline{\xi}(x)\cdot\partial_x)a_{\pm}(x)}{8a_{\pm}(x)^2}\right)\cdot\frac{\text{sgn}(s)}{h_{\pm}(x)}\right]\\
&=\frac{\text{sgn}(s)}{h_{\pm}(x)}\left(\frac{[(\omega\cdot\partial_x)\overline{\xi}(x)]\cdot\omega}{4a_{\pm}(x)}\left(1 - \frac{1}{2} + \frac{1}{2}\cdot\frac{4a_{\pm}(x)}{h_{\pm}(x)}\right) - \frac{\overline{\xi}(x)\cdot\partial_xa_{\pm}(x)}{8a_{\pm}(x)^2}\left(1-\frac{1}{2}\right)\right)\\
&=\frac{\text{sgn}(s)}{2h_{\pm}(x)}\left(\frac{[(\omega\cdot\partial_x)\overline{\xi}(x)]\cdot\omega}{4a_{\pm}(x)}\left(1 + \frac{4a_{\pm}(x)}{h_{\pm}(x)}\right) - \frac{\overline{\xi}(x)\cdot\partial_xa_{\pm}(x)}{8a_{\pm}(x)^2}\right).
\end{aligned}
\label{subsymintegrand}
\end{equation}
Finally, we conclude the Hamiltonian dynamics section by showing that the travel time knowledge in fact determines the lens relation. This argument is a generalization of the argument first presented as Proposition 2.2 and Corollary 2.3 in \cite{michel}, now applied to any Hamiltonian system with a strictly convex Hamiltonian homogeneous of degree $2$:
\begin{lemma}
\label{dtau}
Consider Hamiltonian dynamics on a manifold $M$ with respect to a Hamiltonian $G(x,\xi)$ which is homogeneous of degree $2$ and strictly convex in the fiber variable, and fix $x_0\in M$. Let $U$ be a neighborhood of $x_0$ such that the Hamilton trajectories with base point starting at $x_0$ cover $U$. For $x\in U$, define
\begin{align*}\tau_{x_0}(x) &= \inf\left\{t>0\,:\,x = X(t)\text{ for some Hamilton trajectory }\right. \\
&\left.(X(t),\Xi(t))\text{ with }X(0) = x_0\text{ and }G(x_0,\Xi(0)) = \frac{1}{2}\right\}.
\end{align*}
Suppose $x_1\in U$ has the property that
\begin{equation}
\begin{aligned}&\text{for every }x\text{ in a neighborhood of }x_1\text{, there exists a unique }\xi\text{ such}\\
&\text{that }G(x_0,\xi) = 1/2\text{ and }x = X(\tau_{x_0}(x))\text{ with }(X(0),\Xi(0)) = (x_0,\xi).
\end{aligned}\label{utraj}
\end{equation}
Then $\tau_{x_0}$ is differentiable at $x_1$, and if $(X,\Xi)$ satisfies $(X(0),\Xi(0)) = (x_0,\xi_0)$ with $G(x_0,\xi_0) = 1/2$ and $x_1 = X(\tau_{x_0}(x_1))$, then
\[\Xi(\tau(x_1))\cdot dx = d\tau_{x_0}|_{x_1}.\]
\end{lemma}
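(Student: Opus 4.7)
My approach is to first establish differentiability of $\tau_{x_0}$ at $x_1$ via an inverse function theorem argument, and then compute its differential explicitly using the Lagrangian structure of the Hamilton flowout. Assumption \eqref{utraj} provides a (set-theoretic) inverse to the ``spherical exponential map''
\[ \mathrm{Exp}_{x_0}: (0,\infty) \times S_{x_0} \to M, \qquad (t,\xi) \mapsto X(t, x_0, \xi), \]
where $S_{x_0} = \{\xi \in T^*_{x_0} M : G(x_0, \xi) = 1/2\}$. Source and target both have dimension $n = \dim M$. I would show that the differential of $\mathrm{Exp}_{x_0}$ at the preimage of $x_1$ is nonsingular (possibly invoking strict fiber convexity of $G$ to rule out degeneracy at positive times), so that the inverse function theorem upgrades the continuous inverse $x \mapsto (\tau_{x_0}(x),\xi(x))$ to a smooth one near $x_1$.

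Given smoothness of this inverse, for $v \in T_{x_1} M$ I would differentiate the identity $X(\tau_{x_0}(\gamma(s)), x_0, \xi(\gamma(s))) = \gamma(s)$ along a curve $\gamma$ with $\gamma(0) = x_1$, $\dot\gamma(0) = v$, and pair the result with $\Xi(\tau) := \Xi(\tau_{x_0}(x_1), x_0, \xi(x_1))$. This produces
\[ \Xi(\tau) \cdot v = \bigl(\Xi(\tau) \cdot \dot{X}(\tau)\bigr)\bigl(d\tau_{x_0}|_{x_1} \cdot v\bigr) + \Xi(\tau) \cdot \partial_\xi X(\tau, x_0, \xi(x_1)) \cdot \bigl(d\xi|_{x_1} \cdot v\bigr). \]
For the first parenthetical, Euler's identity for the degree-2 homogeneous $G$ combined with energy conservation along the flow yields $\Xi(\tau) \cdot \dot{X}(\tau) = \Xi \cdot \partial_\xi G|_{(X(\tau),\Xi(\tau))} = 2G(X(\tau),\Xi(\tau)) = 2G(x_0,\xi(x_1)) = 1$.

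The key step is showing the second term vanishes. Since $d\xi|_{x_1} \cdot v \in T_{\xi(x_1)} S_{x_0}$, it suffices to prove $\Xi(t, x_0, \xi_0) \cdot \partial_\xi X(t, x_0, \xi_0) \cdot \eta = 0$ for all $t$, all $\xi_0 \in S_{x_0}$, and all $\eta \in T_{\xi_0} S_{x_0}$. For this I would consider the flowout
\[ \Lambda := \{(X(t, x_0, \xi), \Xi(t, x_0, \xi)) : t \in \mathbb{R},\ \xi \in S_{x_0}\} \subset T^*M, \]
which is Lagrangian: it is the flowout under the Hamiltonian vector field $H_G$ (characteristic for the energy shell $\{G=1/2\}$) of the $(n{-}1)$-dimensional isotropic submanifold $\{x_0\} \times S_{x_0}$ sitting inside the Lagrangian cotangent fiber at $x_0$. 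Consequently the Liouville form $\lambda = \xi \cdot dx$ pulls back to a closed 1-form on $\Lambda$; parametrizing $\Lambda$ by $(t,\xi')$ and applying Euler/conservation as above yields
\[ \lambda|_\Lambda = dt + \bigl(\Xi(t, x_0, \xi') \cdot \partial_{\xi'} X(t, x_0, \xi')\bigr)\,d\xi'. \]
Closedness then forces $\partial_t[\Xi(t, x_0, \xi') \cdot \partial_{\xi'} X(t, x_0, \xi')] = 0$; since this quantity vanishes at $t=0$ (as $X(0, x_0, \xi') = x_0$ is independent of $\xi'$), it vanishes identically. Combining gives $\Xi(\tau) \cdot v = d\tau_{x_0}|_{x_1} \cdot v$ for all $v$, which is the stated identity.

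The main obstacle is the differentiability step: inferring invertibility of $d\mathrm{Exp}_{x_0}$ at the preimage of $x_1$ purely from local uniqueness is not entirely automatic (a smooth injection need not be a diffeomorphism, as $x \mapsto x^3$ shows), so some additional input---either a tacit nondegeneracy of the spherical exponential map or a non-conjugate-points-type hypothesis---must enter. The remainder is a clean packaging of classical Hamilton--Jacobi theory for degree-2 homogeneous Hamiltonians via the Lagrangian flowout, and I expect it to go through essentially formally.
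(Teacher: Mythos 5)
Your core identity computation is correct, but it follows a genuinely different route from the paper's. The paper argues through the action function: setting $S_{x_0}(x,t)=\inf\int_0^t L$, degree-two homogeneity gives $L=G$ along extremals, and rescaling the minimizing trajectory yields $S_{x_0}(x,t)=\tfrac12\,\tau_{x_0}(x)^2/t$; comparing $dS=\frac{\tau}{t}\,d\tau-\tfrac12\frac{\tau^2}{t^2}\,dt$ with the classical Hamilton--Jacobi formula $dS=\xi\cdot dx-G\,dt$ (Arnold, Section 46C) at $t=\tau$ gives $d\tau_{x_0}|_{x_1}=\Xi(\tau)$. You instead differentiate the identity $X(\tau_{x_0}(x),x_0,\xi(x))=x$, use Euler's identity plus energy conservation to get $\Xi\cdot\dot X=2G=1$, and kill the fiber term by the Gauss-lemma-type orthogonality $\Xi(t)\cdot\partial_{\xi'}X(t)\cdot\eta=0$ for $\eta$ tangent to $S_{x_0}$, obtained from closedness of the Liouville form on the flowout Lagrangian. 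That orthogonality is correct (and can even be verified more elementarily: $\partial_t(\Xi\cdot\partial_{\xi'}X)=\partial_{\xi'}[G(X,\Xi)]=0$ by Hamilton's equations and the Euler identities for $\partial_xG$ and $\partial_\xi G$, with vanishing at $t=0$, so the flowout machinery is optional). Your route isolates the geometric first-variation statement; the paper's route leans on Hamilton--Jacobi theory and extracts the differentiability of $\tau_{x_0}$ from that of $S$.

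On the differentiability step you flag: the concern is legitimate, since hypothesis \eqref{utraj} by itself only provides a set-theoretic inverse of the spherical exponential map, but it is not fatal in context. The paper's own proof is equally informal here---the formula $dS=\xi\cdot dx-G\,dt$ cited from Arnold presupposes a nondegenerate (central) field of extremals---and in the paper's application the standing assumption \eqref{noconjpts} (full rank of $\partial_{(t,\omega)}X(t,x,\xi(\omega))$ for $t\neq0$, equivalent by homogeneity to nondegeneracy of your map $(t,\xi)\mapsto X(t,x_0,\xi)$ on $(0,\infty)\times S_{x_0}$) supplies exactly the input your inverse function theorem argument needs; one then identifies the local smooth inverse with $(\tau_{x_0},\xi(\cdot))$ using attainment of the infimum (compactness of $S_{x_0}$) and the uniqueness in \eqref{utraj}. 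With that point made explicit, your proof is complete and correct.
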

Notice that the function $\tau_{x_0}$ is just the travel time from the point $x_0$ on the level set $\{G=1/2\}$ (this normalization is chosen for consistency with geodesic flow in the case that $G$ is a dual metric.) 

Assuming this lemma for now, consider $\Omega\subset M$ an open subset whose boundary is strictly convex with respect to the Hamilton flow of $G$, i.e if $\gamma$ is a Hamilton trajectory with $\gamma(0)\in\Omega$ and $\gamma(t)\in\partial\Omega$, then $\gamma'(t)\in T_{\gamma(t)}M\backslash T_{\gamma(t)}\partial\Omega$, and in fact must point outwards away from $\Omega$. Suppose $x_0\in\overline\Omega$ and every point in $\partial\Omega\backslash\{x_0\}$ satisfies property \eqref{utraj}. For $x_1\in\partial\Omega\backslash\{x_0\}$ and $\xi\cdot dx\in T_{x_1}^*M$, we have that $\xi\cdot dx = d\tau_{x_0}|_{x_1}$ if and only if the following three properties hold:
\begin{enumerate}
\item $\xi\cdot dx|_{T_{x_1}\partial\Omega} = d\tau_{x_0}|_{T_{x_1}\partial\Omega}$.
\item $G(x_1,\xi) = 1/2$.
\item If $\xi^{\perp}\cdot dx$ is an outward conormal to $\partial\Omega$ at $x_1$, then ${\xi^{\perp}\cdot (\partial_{\xi}G(x_1,\xi))} > 0$ (i.e. the corresponding vector $\partial_{\xi}G(x_1,\xi)$ is outward-pointing).
\end{enumerate}
The necessity is obvious. Conversely, if $\xi$ satisfies the first property, then $\xi\cdot dx$ is determined up to a multiple of the conormal to the boundary (i.e. there is a certain line $\xi$ must lie on), while the second property further reduces the possibilities for $\xi$ to at most two points since $G$ is strictly convex. If there are two possibilities, say $\xi_+$ and $\xi_-$ with $\xi_+\cdot dx$ differing from $\xi_-\cdot dx$ by a positive multiple of an outward conormal $\xi^{\perp}\cdot dx$, then in fact we will have $\pm\xi^{\perp}\cdot(\partial_{\xi}G(x_1,\xi_{\pm}))>0$, i.e. the two possibilities correspond to inward/outward pointing vectors (so that $d\tau$ is then uniquely specified as the covector corresponding to the outward pointing vector). Indeed, the function $g(s) = {G(x_1,\xi_-+s(\xi_+-\xi_-))}$ is strictly convex with $g(0) = g(1)$, and hence $g'(0)<0$ while $g'(1)>0$; the two derivatives are precisely $(\xi_+-\xi_-)\cdot(\partial_{\xi}G(x_1,\xi_{\pm}))$, which shows the claim by noting that $(\xi_+-\xi_-)\cdot dx$ is a positive multiple of $\xi^{\perp}\cdot dx$.

The benefit of these three properties is that they can be checked using just the knowledge of the travel times between points on the boundary, as well as the Hamilton $G$ restricted to the boundary, so as an immediate consequence we have:
\begin{corollary}
\label{exit}
Suppose $\Omega\subset M$ has strictly convex boundary, and for every $x_0\in\partial\Omega$ we have that every point in $\partial\Omega\backslash\{x_0\}$ satisfies property \eqref{utraj}. Then for any distinct pair of points $x_0,x_1\in\partial\Omega$ the exiting covector on the Hamilton trajectory connecting $x_0$ and $x_1$ is determined by the knowledge of the Hamiltonian $G$ on the boundary $\partial\Omega$ and the travel time function $\tau$.
\end{corollary}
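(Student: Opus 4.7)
The plan is to reduce the problem to Lemma \ref{dtau} and then verify that each of the three characterizing properties listed in the paragraph preceding the corollary can be read off from the given boundary data. For distinct $x_0, x_1 \in \partial\Omega$, the Hamilton trajectory from $x_0$ to $x_1$ exits $\partial\Omega$ at $x_1$ (this is where strict convexity of the boundary is used, to ensure the trajectory really does exit rather than graze). By Lemma \ref{dtau}, the exit covector equals $d\tau_{x_0}|_{x_1}$, so it suffices to show that this covector is determined by $G|_{\partial\Omega}$ (more precisely by $G$ on $T^*M$ over boundary points) together with the travel time function $\tau$ between boundary points.

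The strategy is then to show properties (1)--(3) are computable from this data. For (1): the tangential restriction $d\tau_{x_0}|_{T_{x_1}\partial\Omega}$ is simply the differential at $x_1$ of the function $x \mapsto \tau(x_0,x)$ on $\partial\Omega$, which is manifestly boundary travel time data. For (2): the condition $G(x_1,\xi) = 1/2$ uses only the restriction of $G$ to $T^*_{x_1}M$. For (3): distinguishing inward from outward covectors requires only evaluating $\partial_\xi G(x_1,\xi)$ against an outward conormal to $\partial\Omega$, and such a conormal is determined intrinsically by $\partial\Omega$ itself; again, only $G$ at the boundary point is used.

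The final step is to invoke the uniqueness argument already spelled out in the paragraph preceding the corollary: condition (1) pins $\xi$ down to an affine line (differing from any fixed solution by scalar multiples of the conormal $\xi^\perp$), the strict convexity of $G$ in the fiber variable forces the level set $\{G=1/2\}$ to meet this line in at most two points, and the convexity computation with $g(s) = G(x_1, \xi_- + s(\xi_+ - \xi_-))$ shows that these two candidates correspond to oppositely oriented $\partial_\xi G$ vectors, so that condition (3) selects exactly one. The corollary therefore follows by combining this characterization with the identification provided by Lemma \ref{dtau}.

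There is no substantial new obstacle beyond organizing the preceding discussion: the main content has already been established, and the proof is essentially a bookkeeping assertion that each ingredient in the characterization of $d\tau_{x_0}|_{x_1}$ depends only on $G$ at boundary points and on the boundary travel time function $\tau$. The only subtlety is making sure that the hypotheses of Lemma \ref{dtau}, in particular \eqref{utraj}, are in force at $x_1$ --- which is exactly what is assumed in the statement of the corollary.
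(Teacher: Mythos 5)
Your proposal is correct and follows essentially the same route as the paper, which treats the corollary as an immediate consequence of the three-property characterization of $d\tau_{x_0}|_{x_1}$ established just before it (via Lemma \ref{dtau} and the convexity argument with $g(s)$), noting that each property is checkable from $G$ at boundary points and the boundary travel times. Your write-up just makes that bookkeeping explicit.
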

Since our Hamiltonian $G$ is even in the fiber variable, it follows that all trajectories are reversible, and hence the starting and ending covector for any trajectory connecting two points on the boundary is determined by the travel time function (in particular there is a unique trajectory for every pair of points). So in fact the travel time data also determines if there are any trapped trajectories; assuming there are none, it follows that the travel time data determines the lens relation data. Thus we are free to study the lens rigidity problem.

It thus suffices to prove Lemma \ref{dtau}.

\begin{proof}
For $x$ in a neighborhood of $x_0$ and $t>0$, define the action $S_{x_0}(x,t)$ as
\[S_{x_0}(x,t) = \inf_{\substack{\gamma(0) = x_0 \\ \gamma(t) = x}}\int_0^t{L(\gamma(s),\dot{\gamma}(s))\,ds}\]
where $L$ is the Lagrangian associated to $G$, i.e.
\[L(x,v) = \inf_{\xi}{[\xi\cdot v - G(x,\xi)]}.\]
Note that by strict convexity the infimum in the right-hand side is indeed attained, and furthermore it is attained at $\xi$ satisfying $v = \partial_{\xi}G(x,\xi)$, in which case
\[L(x,v) = \xi\cdot\partial_{\xi}G(x,\xi) - G(x,\xi) = G(x,\xi),\]
using that $G$ is homogeneous of degree $2$. Furthermore, the least action principle gives that, for fixed $t$, the curve $\gamma$ minimizing the integral in the definition of $S$ is a projection of a Hamilton trajectory. If $(X(s),\Xi(s))$ is a Hamilton trajectory with $X(0) = x_0$ and $X(\tau) = x$ for $\tau = \tau_{x_0}(x)$, then $(X_t(s),\Xi_t(s)) = \left(X\left(\frac{\tau}{t}s\right),\frac{\tau}{t}\Xi\left(\frac{\tau}{t}s\right)\right)$ is also a Hamilton trajectory, now with the property that $X_t(0) = x_0$ and $X_t(t) = x$. Since $G(X(s),\Xi(s)) = 1/2$ for all $0\le s\le\tau$, it follows that $G(X_t(s),\Xi_t(s)) = \frac{\tau^2}{2t^2}$ by homogeneity. It follows that
\[ S_{x_0}(x,t) = \int_0^t{L(X_t(s),\dot{X}_t(s))\,ds} =\int_0^t{G(X_t(s),\Xi_t(s))\,ds}= t\cdot \frac{\tau^2}{2t^2} = \frac{1}{2}\frac{\tau^2}{t}.\]
Differentiating the above equation thus gives
\[dS_{x_0}|_{(x,t)} = \frac{\tau}{t}\,d\tau|_x - \frac{1}{2}\frac{\tau^2}{t^2}\,dt|_t.\]
On the other hand, we also have (cf. \cite{arnold} Section 46C)
\[dS_{x_0}|_{(x,t)} = \xi\cdot dx|_x - G\,dt|_t\]
where $\xi\cdot dx = \Xi_t(t)\cdot dx$ is the corresponding exiting covector. Equating the coefficients at $t = \tau$ thus gives
\[d\tau|_x = \Xi_{\tau}(\tau)\cdot dx = \Xi(\tau(x))\cdot dx,\]
as desired. (Note that $G = \frac{\tau^2}{2t^2}$ at the exiting covector, so the coefficients of $dt$ also match, as expected.)
\end{proof}

\subsection{Computing the symbols of the operators $N^{\nu}_{\pm}$ and $\tilde{N}^{\nu}_{\pm}$}
\label{excomp}

We now apply the calculations of Sections \ref{matrixweight} and \ref{hamdyn} to our situation. In the notation of Section \ref{matrixweight}, and recalling formulas \eqref{inu} and \eqref{adjoint}, for the operators $I^{\nu}_{\pm}$, we have 
\[A(x,\xi) = -E^{\nu}(x,\xi)\frac{\partial\tilde\Xi}{\partial\xi}(\tau(x,\xi),(x,\xi)),\]
and for the generalized adjoint we have 
\[B(x,\omega) = \chi(x,\omega)\left(-\frac{\partial\tilde\Xi}{\partial\xi}(\tau(x,\xi(\omega)),(x,\xi(\omega)))\right)^{-1},\]
so
\begin{align*}
C(x,t,\omega) &= \chi(x,\omega)E^{\nu}(X(t),\Xi(t))\cdot \left(\frac{\partial\tilde\Xi}{\partial\xi}(\tau(x,\xi(\omega)),(x,\xi(\omega)))\right)^{-1}\\
&\cdot\frac{\partial\tilde\Xi}{\partial\xi}(\tau(X(t),\Xi(t)),(X(t),\Xi(t)))
\end{align*}
with $(X(t),\Xi(t)) = (X(t,x,\xi(\omega)),\Xi(t,x,\xi(\omega)))$. In particular,
\[C(x,0,\omega) = \chi(x,\omega)E^{\nu}(x,\xi(\omega))\]
(note that this is scalar-valued), so $\sigma_{-1}(N^{\nu})(x,\zeta) = a_{-1}|\zeta|^{-1}$, with
\begin{equation}
a_{-1}(x,\zeta) = 2\pi\int_{\zeta^{\perp}\cap\mathbb{S}^2}{\chi(x,\omega)E^{\nu}(x,\xi(\omega))\,d\mathbb{S}^1(\omega)}.\label{psymb}
\end{equation}
Thus the principal symbol is scalar-valued.

Furthermore, by computing
\begin{align*} \frac{\partial G_{\pm}}{\partial a_{11}} &= \xi_I^2\left(1\pm\frac{(a_{11}-a_{55})\xi_I^2+(a_{33}-a_{55})\xi_T^2}{\sqrt{((a_{11}-a_{55})\xi_I^2+(a_{33}-a_{55})\xi_T^2)^2-4E^2\xi_I^2\xi_T^2}}\right) \\
\frac{\partial G_{\pm}}{\partial a_{33}} &= \xi_T^2\left(1\pm\frac{(a_{11}-a_{55})\xi_I^2+(a_{33}-a_{55})\xi_T^2}{\sqrt{((a_{11}-a_{55})\xi_I^2+(a_{33}-a_{55})\xi_T^2)^2-4E^2\xi_I^2\xi_T^2}}\right)\\
\frac{\partial G_{\pm}}{\partial E^2} &= \xi_T^2\left(\frac{\mp 2\xi_I^2}{\sqrt{((a_{11}-a_{55})\xi_I^2+(a_{33}-a_{55})\xi_T^2)^2-4E^2\xi_I^2\xi_T^2}}\right)
\end{align*}
and noting that we can write
\begin{align*} \frac{\partial G_-}{\partial a_{11}} &= -\frac{4E^2\xi_I^4\xi_T^2}{\sqrt{A^2-B}(\sqrt{A^2-B}+A)}\\
\text{and}\quad\frac{\partial G_-}{\partial a_{33}} &= -\frac{4E^2\xi_I^2\xi_T^4}{\sqrt{A^2-B}(\sqrt{A^2-B}+A)}
\end{align*}
where $A = {(a_{11}-a_{55})\xi_I^2+(a_{33}-a_{55})\xi_T^2}$ and $B = 4E^2\xi_I^2\xi_T^2$, from the algebraic identity $\frac{A}{\sqrt{A^2-B}} = 1 + \frac{B}{\sqrt{A^2-B}(\sqrt{A^2-B}+A)}$,
we see the following:
\begin{itemize}
\item $\frac{\partial G_+}{\partial a_{11}}$ is a positive smooth multiple of $\xi_I^2$. Since $E^{11}_+$ is obtained by integrating $\frac{\partial G_+}{\partial a_{11}}$ over a range of parameter values, it follows that $E^{11}_+$ is also a positive smooth multiple of $\xi_I^2$.
\item $\frac{\partial G_+}{\partial a_{33}}$, $\frac{\partial G_+}{\partial E^2}$, $\frac{\partial G_-}{\partial a_{11}}$, $\frac{\partial G_-}{\partial a_{33}}$, and $\frac{\partial G_-}{\partial E^2}$, are all smooth multiples of $\xi_T^2$, and this multiple is everywhere nonnegative (resp. nonpositive, nonpositive, nonpositive, nonnegative). In other words, for these cases we can write
\[\frac{\partial G_{\pm}}{\partial\nu}(x,\xi) = g^{\nu}_{\pm}(x,\xi)\xi_T^2.\]
Thus, the same is true for $E^{33}_+$, $E^{E^2}_+$, $E^{11}_-$, $E^{33}_-$, and $E^{E^2}_-$, since we can write $E^{\nu}_{\pm}(x,\xi) = f^{\nu}_{\pm}(x,\xi)\xi_T^2$, where
\[f^{\nu}_{\pm}(x,\xi) = \int_0^1{g^{\nu}_{\pm}(a_{11}+sr_{11},a_{33}+sr_{33},E^2+sr_{E^2};x,\xi)\,ds}.\]
Moreover, the $f^{\nu}$ are smooth, and $f^{33}_+$ and $f^{E^2}_-$ are nonnegative while $f^{E^2}_+$, $f^{11}_-$, and $f^{33}_-$ are nonpositive. Moreover, $f^{33}_+$ is everywhere positive, while $f^{E^2}_{\pm}$ is a negative (resp. positive) multiple of $\xi_I^2$ and is thus nonzero everywhere except when $\xi_I = 0$. Since $g^{11}_-$ is a negative multiple of $E^2\xi_I^4$, it follows that $f^{11}_-$ is also a negative multiple of $\xi_I^4$ and is thus nonzero away from $\xi_I=0$, \emph{provided} that we assume $E^2$ is nonzero either in the background elasticity or the perturbed elasticity, although it can be a very small multiple if we assume instead that $E^2$ is known to be small. Finally, since $g^{33}_{-}$ is a multiple of $E^2\xi_I^2\xi_T^2$, it follows that $f^{33}_-$ will vanish when $\xi_I = 0$ or $\xi_T = 0$, and like $f^{11}_-$ it can be very small if $E^2$ is assumed to be small.
\end{itemize}

For the $qP$ wave speed and $\nu = a_{11}$, we have $E^{11}_+(x,\xi(\omega))>0$ unless $\xi_I(\omega) = 0$. In particular, if we choose $\chi$ to be identically one in a neighborhood of the equatorial sphere $\{\xi_T(\omega) = 0\}$ (where $|\xi_I|$ is bounded away from zero), then the integral over any $S^1$ circle will contain points where $E^{11}_+$ is positive. Hence for such $\chi$ we recover the fact that $N_+^{11}$ is elliptic. 

Now for $\nu$ for which we can write $E^{\nu}_{\pm} = f^{\nu}_{\pm}\xi_T^2$ we have that
\[a_{-1} = 2\pi\int_{\zeta^{\perp}\cap\mathbb{S}^2}{\chi(x,\omega)f^{\nu}_{\pm}(x,\xi(\omega))\xi_T^2(\omega)\,d\mathbb{S}^1(\omega)}.\]
Since $\xi_T^2$ is nonnegative and $f^{\nu}_{\pm}$ is nonnegative/nonpositive, it follows that $a_{-1}$ is a nonnegative/nonpositive scalar multiple of the identity. Moreover, in order for $a_{-1}$ to vanish, we must have $\xi_T(0,(0,\omega')) = 0$ for all $\omega'$, i.e. $\xi_T(\omega) = 0$ for all $\omega$ perpendicular to $\zeta$. This happens precisely when $\zeta$ is a multiple of $\overline{\xi}(x)$. Moreover, $a_{-1}$ will actually vanish quadratically on $\Sigma$ due to nonnegativity, and as long as $\chi(x,\omega)f^{\nu}_{\pm}(x,\xi(\omega))$ does not vanish on the equatorial sphere $\overline{\xi}(x)^{\perp}\cap\mathbb{S}^2$, the quadratic vanishing is nondegenerate (essentially due to the fact that the quadratic vanishing of $\xi_T^2$ is nondegenerate; cf. Lemma 3.5 of \cite{ti}). For $\chi\equiv 1$ near the equatorial sphere, this will be the case for all $f^{\nu}_{\pm}$ except $f^{33}_-$. Moreover, since $f^{33}_-$ is nonpositive and also vanishes on the equatorial sphere $\overline{\xi}(x)^{\perp}\cap\mathbb{S}^2$, it will in fact vanish quadratically, and so overall $E^{33}_-$ will vanish quartically on the equatorial sphere. This implies that the principal symbol of $N^{33}_-$ will actually vanish quartically on $\Sigma$ as well. 

We now analyze the behavior of the subprincipal term when $\zeta$ is a multiple of $\overline{\xi}(x)$. 
From Section \ref{matrixweight}, specifically \eqref{subsymgenfinal}, we have that the subprincipal term is $a_{-2}|\zeta|^{-2}$, with
\begin{align*}
a_{-2} &= 2\pi i\int_{\overline{\xi}^{\perp}\cap\mathbb{S}^2}\chi(x,\omega)f^{\nu}_{\pm}(x,\xi(\omega))\left[\partial_t\xi_T(x,0,\omega')\partial_{\omega_{\parallel}}\xi_T(x,0,\omega) \right.\\
&\phantom{=2\pi i}\left.- \frac{\zeta}{|\zeta|}\cdot\alpha(\omega)(\partial_{\omega_{\parallel}}\xi_T(x,0,\omega))^2\right]\,d\mathbb{S}^1(\omega).
\end{align*}
From \eqref{subsymintegrand}, we thus have
\begin{equation}
\begin{aligned}
a_{-2}(x,s\overline{\xi}(x)) &= \text{sgn}(s)\frac{2\pi i}{h_{\pm}(x)}\int_{\overline{\xi}^{\perp}\cap\mathbb{S}^2}\left[\chi(x,\omega)f^{\nu}_{\pm}(x,\xi(\omega))\right.\\
&\left.\left(\frac{[(\omega\cdot\partial_x)\overline{\xi}(x)]\cdot\omega}{4a_{\pm}(x)}\left(1 + \frac{4a_{\pm}(x)}{h_{\pm}(x)}\right) - \frac{\overline{\xi}(x)\cdot\partial_xa_{\pm}(x)}{8a_{\pm}(x)^2}\right)\right]\,d\mathbb{S}^1(\omega).
\end{aligned}
\label{subsymb}
\end{equation}
We make the following remarks\footnote{The author wishes to thank Maarten de Hoop for helpful discussions regarding these remarks.}:
\begin{itemize}
\item In the case of isotropic elasticity we have that the ratio $\left(1+\frac{4a_{\pm}(x)}{h_{\pm}(x)}\right)$ equals $2$ since $h_{\pm} = 4a_{\pm}$; in any case the ratio is positive as it is greater than $1$.

\item For the Earth, if we take the axis of isotropy $\overline{\xi}(x)$ to point roughly out of the earth, then the curvature term $(\omega\cdot\partial_x\overline{\xi}(x))\cdot\omega$ will be positive if the layers curve inwardly and negative if the layers curve outwardly. On a macroscopic scale the layers represent varying depths of the interior of the Earth and hence are roughly spherical, so this term would be positive.

\item Similarly, again taking $\overline{\xi}(x)$ to point out of the earth, the parameter gradient term $-\overline{\xi}(x)\cdot\partial_xa_{\pm}(x)$ will be positive if the material parameter $a_{11}$ increases with depth (since the axis points away from deeper regions) and negative if it decreases. It is geologically reasonable to assume that the material parameters increase with depth, and hence this term would also be positive.

\item Reversing the axis of isotropy will make both of the above terms negative, but in either case the signs agree.
\end{itemize}
Thus, under appropriate assumptions, the factor $\frac{[(\omega\cdot\partial_x)\overline{\xi}(x)]\cdot\omega}{4a_{\pm}(x)}\left(1 + \frac{4a_{\pm}(x)}{h_{\pm}(x)}\right) - \frac{\overline{\xi}(x)\cdot\partial_xa_{\pm}(x)}{8a_{\pm}(x)^2}$ will have a definite (nonzero) sign over all $\omega\in\overline{\xi}^{\perp}\cap\mathbb{S}^2$. For the $f^{\nu}$ which do not vanish on the equatorial sphere, it follows that if we take $\chi\equiv 1$ near the equatorial sphere (in which case $\chi$ drops out from the formula since we are integrating on the equatorial sphere), then the integrand in \eqref{subsymb} will always be nonnegative/nonpositive sign, and since $f^{\nu}$ is nonzero somewhere, it follows that the entire integral will be nonzero. For those $\nu$, it follows that the corresponding operator has a nonvanishing subprincipal term.

We now analyze the operators $\tilde{N}^{\nu}_{\pm}$. Recall from \eqref{itildenu} that the terms $\partial_xE^{\nu}$ and $\partial_{\xi}E^{\nu}$ appear in the matrix weight defining $\tilde{I}^{\nu}_{\pm}$. For the wave speeds and parameters such that $E^{\nu} = f^{\nu}\xi_T^2$ (i.e. all except the $qP$ speed for $a_{11}$), we have that $\partial_xE^{\nu}$ and $\partial_{\xi}E^{\nu}$ can be written as smooth multiples of $\xi_T$. Thus in these cases we have
\[C(x,t,\omega) = F^{\nu}_{\pm}(x,t,\omega)\xi_T(x,t,\omega)\]
for some smooth (matrix-valued) $F^{\nu}$. In such cases, we have that the principal symbol
\[\sigma_{-1}(\tilde{N}^{\nu}_{\pm})(x,\zeta) = |\zeta|^{-1}\cdot 2\pi\int_{\zeta^{\perp}\cap\mathbb{S}^2}{F^{\nu}_{\pm}(x,0,\omega)\xi_T(\omega;x)\,d\mathbb{S}^1(\omega)}\]
vanishes on $\Sigma$ since $\xi_T(\omega) = 0$ for all $\omega$ annihilated by $\overline{\xi}$. In other words, if the principal symbol of $N^{\nu}$ vanishes on $\Sigma$, then so does the principal symbol of $\tilde{N}^{\nu}$ (although \emph{a priori} we cannot say it vanishes quadratically). Since these operators are associated to an ``error'' term that will be controlled by a Poincar\'e inequality argument, we will not investigate further properties of these operators, beyond that they (aside from $\tilde{N}^{11}_+$) have vanishing principal symbol on $\Sigma$.
\begin{proof}[This thus proves Theorem \ref{symbolthm}]\end{proof}

\begin{remark}
We can in fact explicitly calculate $f^{\nu}(x,\xi(\omega))$ for $\omega$ annihilated by $\overline{\xi}$: indeed, for those $\omega$ we have that $\xi(\omega)$ is also orthogonal to $\overline{\xi}$, i.e. $\xi_T(\omega) = 0$, and since
\[g^{11}_-(x,\xi)|_{\xi_T=0} = \frac{-4E^2\xi_I^4}{\sqrt{A^2-B}(\sqrt{A^2-B}+A)}\Big|_{\xi_T=0}=-\frac{2E^2(x)}{(a_{11}(x)-a_{55}(x))^2}\]
where $A = (a_{11}-a_{55})\xi_I^2+(a_{33}-a_{55})\xi_T^2, B = 4E^2\xi_I^2\xi_T^2$,
\begin{align*}
g^{33}_{\pm}(x,\xi)|_{\xi_T=0} &= \left(1\pm\frac{(a_{11}-a_{55})\xi_I^2+(a_{33}-a_{55})\xi_T^2}{\sqrt{((a_{11}-a_{55})\xi_I^2+(a_{33}-a_{55})\xi_T^2)^2-4E^2\xi_I^2\xi_T^2}}\right)\Big|_{\xi_T=0} \\
&= 1\pm 1 \\
g^{E^2}_{\pm}(x,\xi)|_{\xi_T=0} &= \left(\frac{\mp 2\xi_I^2}{\sqrt{((a_{11}-a_{55})\xi_I^2+(a_{33}-a_{55})\xi_T^2)^2-4E^2\xi_I^2\xi_T^2}}\right)\Big|_{\xi_T = 0} \\
&= \mp\frac{2}{a_{11}(x)-a_{55}(x)}
\end{align*}
(note that the expressions above do not depend on $\xi$ so long as it is orthogonal to $\overline{\xi}$), it follows that for $\omega$ annihilated by $\overline{\xi}$ we can write $f^{\nu}_{\pm}(x,\xi(\omega)) = f^{\nu}_{\pm}(x)$, with
\begin{equation}
\begin{aligned}
f^{11}_-(x) &= \int_0^1{-\frac{2(E^2(x)+sr_{E^2}(x))}{(a_{11}(x)+sr_{11}(x)-a_{55}(x))^2}\,ds} \\
f^{33}_{\pm}(x) &= 1\pm 1\\
f^{E^2}_{\pm}(x) &= \int_0^1{\mp\frac{2}{a_{11}(x)+sr_{11}(x)-a_{55}(x)}\,ds}.
\end{aligned}
\label{f(x)}
\end{equation}
Thus $f^{\nu}_{\pm}(x,\xi(\omega)) = f^{\nu}_{\pm}(x)$ can be factored out of the integral in \eqref{subsymb}. In particular, if $\chi\equiv 1$ near the equatorial sphere, then the integral in \eqref{subsymb} can be explicitly evaluated to yield
\[a_{-2}(x,s\overline{\xi}(x)) = \text{sgn}(s)i\cdot\frac{\pi^2 f^{\nu}_{\pm}(x)}{h_{\pm}(x)}\left(\frac{H(x)}{a_{\pm}(x)}\left(1+\frac{4a_{\pm}(x)}{h_{\pm}(x)}\right)-\frac{\overline{\xi}(x)\cdot\partial_xa_{\pm}(x)}{2a_{\pm}(x)^2}\right)\]
where $H(x)$ is the mean curvature\footnote{This is obtained by using the fact that $\int_{\mathbb{S}^{n-1}}{A(\omega,\omega)\,d\omega}$ for a quadratic form $A$ is precisely the volume of $\mathbb{S}^{n-1}$ times the average of its eigenvalues.} of the layer at $x$.
\end{remark}

We now make a more quantitative estimate of the principal symbols, to be used in the inversion arguments. Note that the subprincipal behavior of the operators only depend on the behavior of the integrand near the equatorial sphere. Thus, let $\chi$ be a cutoff such that $\chi$ is identically $1$ in a neighborhood of the equatorial sphere $\{(x,\omega)\,:\,\xi_T(\omega;x) = 0\}$, and suppose it is supported in a region of the form $\{|\xi_T|<\epsilon|\xi|\}$. Note that on $\{|\xi_T|<\epsilon|\xi|\}$ we have $|\xi_I|^2 = |\xi|^2(1+O(\epsilon^2))$, and
\[\frac{(a_{11}-a_{55})\xi_I^2+(a_{33}-a_{55})\xi_T^2}{\sqrt{((a_{11}-a_{55})\xi_I^2+(a_{33}-a_{55})\xi_T^2)^2-4E^2\xi_I^2\xi_T^2}} = 1 + O(\epsilon^2)\]
and
\[\frac{\xi_I^2}{\sqrt{((a_{11}-a_{55})\xi_I^2+(a_{33}-a_{55})\xi_T^2)^2-4E^2\xi_I^2\xi_T^2}} = \frac{1}{a_{11}-a_{55}} + O(\epsilon^2)\]
where we can make the $O(\epsilon^2)$ estimate uniformly assuming \emph{a priori} uniform bounds on $a_{11}-a_{55}$ and $a_{33}-a_{55}$ (in particular from below by a positive constant), as well as on $E^2$. Thus, in the region where $|\xi_T|<\epsilon|\xi|$, we have
\begin{align*} \frac{\partial G_{\pm}}{\partial a_{11}} &= \xi_I^2\left(1\pm 1 + O(\epsilon^2)\right) \\
\frac{\partial G_{\pm}}{\partial a_{33}} &= \xi_T^2\left(1\pm 1 + O(\epsilon^2)\right)\\
\frac{\partial G_{\pm}}{\partial E^2} &= \xi_T^2\left(\frac{\mp 2}{a_{11}-a_{55}}+O(\epsilon^2)\right)
\end{align*}
This then implies that
\begin{align*} E^{11}_{\pm} &= \xi_I^2\left(1\pm 1 + O(\epsilon^2)\right) \\
E^{33}_{\pm} &= \xi_T^2\left(1\pm 1 + O(\epsilon^2)\right)\\
E^{E^2}_{\pm} &= \xi_T^2\left(\frac{\mp 2}{(a_{11}-a_{55})_l}+O(\epsilon^2)\right)
\end{align*}
where $(a_{11}-a_{55})_l$ is the logarithmic mean of $a_{11}-a_{55}$ and $\tilde{a}_{11} - a_{55}$ satisfying $\frac{1}{(a_{11}-a_{55})_l} = \int_0^1{\frac{1}{a_{11}+sr_{11}-a_{55}}\,ds}$. Plugging this into \eqref{psymb}, we see that if we let
\begin{equation}
a_{\pm,I/T}(x,\zeta) = \int_{\zeta^{\perp}\cap\mathbb{S}^2}{4\pi\chi(x,\xi_{\pm}(\omega))\xi_{I/T,\pm}^2(\omega)\,d\mathbb{S}^1(\omega)}\label{aint}
\end{equation}
then we have
\begin{equation}
\begin{aligned}
\sigma_{-1}(N^{11}_+) &= (1+O(\epsilon^2))a_{+,I},& \sigma_{-1}(N^{11}_-) &= O(\epsilon^2)a_{-,I},\\
\sigma_{-1}(N^{33}_+) &= (1+O(\epsilon^2))a_{+,T},& \sigma_{-1}(N^{33}_-) &= O(\epsilon^2)a_{-,T},\\
\sigma_{-1}(N^{E^2}_{\pm}) &= \left(\mp\frac{1}{(a_{11}-a_{55})_l}+O(\epsilon^2)\right)a_{\pm,T}.
\end{aligned} \label{cutoffest}
\end{equation}
Furthermore, we have that $a_{\pm,I}$ is everywhere positive, while $a_{\pm,T}$ is everywhere nonnegative and vanishes precisely on $\Sigma$, where the vanishing is nondegenerately quadratic.

We also consider the problem of when there is a functional relationship and calculate the relevant symbols in this situation. The heuristic in this case is the following rough idea: if for some parameter $\nu_0$ we know that $\nu_0 = f(\nu_1,\dots,\hat{\nu_0},\dots,\nu_n)$ (i.e. $\nu_0$ is a knkown function of the other parameters $\nu\ne\nu_0$), then we can write $\tilde{G} - G = \sum_{\nu\ne \nu_0}{E^{\nu}_{eff}r_{\nu}}$, where
\begin{align*} E^{\nu}_{eff}(x,\xi) &= \int_0^1{\frac{\partial}{\partial\nu}[G(\nu_1,\dots,f(\nu_1,\dots,\hat{\nu_0},\dots,\nu_n),\dots,\nu_n)](\nu + sr_{\nu};x,\xi)\,ds} \\
&= \int_0^1{\left(\frac{\partial G}{\partial\nu} + \frac{\partial G}{\partial\nu_0}\frac{\partial f}{\partial\nu}\right)(\nu+sr_{\nu};x,\xi)\,ds}
\end{align*}
The behavior of the associated operator $N^{\nu}_{eff,\pm}$ depends heavily on the behavior of the integrand $\frac{\partial G}{\partial\nu} + \frac{\partial G}{\partial\nu_0}\frac{\partial f}{\partial\nu}$ (note that for $N^{\nu}_{\pm}$, i.e. without the functional relationship, that this term is just $\frac{\partial G}{\partial\nu}$). By abuse of notation, we set
\[E^{\nu}_{\pm}(x,\xi) = \int_0^1{\frac{\partial G}{\partial\nu}(\nu + sr_{\nu};x,\xi)\,ds}\]
and $N^{\nu}_{\pm}$ denote the operator constructed with the above functions $E^{\nu}_{\pm}$ as if we were considering the non-functional case; the qualitative behavior of these objects is still the same as in the non-functional case.
For such cases, let
\[\tilde{f}_{\nu} = \int_0^1{\frac{\partial f}{\partial\nu}(\nu + sr_{\nu})\,ds}\]
(so this depends on $x$ via the parameters' values at $x$, but not $\xi$). Then $N^{\nu}_{eff,\pm}$ is the sum of $N^{\nu}_{\pm}$ times a smooth multiple of $N^{\nu_0}_{\pm}$, where this multiple is close to $\tilde{f}_{\nu}$.

Thus, suppose $a_{33} = f(a_{11},E^2)$. Then in essence we are adding a multiple times $N^{33}_{\pm}$ to the unmodified operators $N^{\nu}_{\pm}$ to obtain $N^{\nu}_{eff,\pm}$. Since $\sigma_{-1}(N^{33}_+)$ vanishes quadratically on $\Sigma$, it follows that $\sigma(N^{11}_{eff,+})$ is still elliptic near $\Sigma$, while since $\sigma\left(N^{33}_-\right)$ vanishes quartically near $\Sigma$, it follows that $\sigma(N^{E^2}_{eff,-})$ still has nondegenerately quadratically vanishing principal symbol near $\Sigma$, with the subprincipal behavior unchanged. Finally, the (at least quadratic) vanishing of $\sigma_{-}(N^{33}_{\pm})$ guarantees that $\sigma(N^{E^2}_{eff,+})$ and $\sigma(N^{11}_{eff,-})$ still have quadratically vanishing principal symbols. Thus in the effective matrix symbol $\begin{pmatrix} \sigma(N^{11}_{eff,+}) & \sigma(N^{E^2}_{eff,+}) \\ \sigma(N^{11}_{eff,-}) & \sigma(N^{E^2}_{eff,-})\end{pmatrix}$ we have that the qualitative behavior near $\Sigma$ of the diagonal terms are the same as in the non-functional problem, and that the qualitative off-diagonal behavior is also the same, aside from possible increased vanishing at $\Sigma$. Furthermore, away from $\Sigma$ we can estimate the terms by their principal symbols, and making the same approximations as above we have
\begin{equation}
\begin{aligned}
&\begin{pmatrix} \sigma_{-1}(N^{11}_{eff,+}) & \sigma_{-1}(N^{E^2}_{eff,+}) \\ \sigma_{-1}(N^{11}_{eff,-}) & \sigma_{-1}(N^{E^2}_{eff,-})\end{pmatrix} \\
&= \begin{pmatrix} (1 + O(\epsilon^2))(a_{+,I} + \tilde{f}_{11}a_{+,T}) & \left(\frac{-1}{(a_{11}-a_{55})_l}+\tilde{f}_{E^2}+O(\epsilon^2)\right)a_{+,T}\\ O(\epsilon^2)(a_{-,I}+a_{-,T})& \left(\frac{1}{(a_{11}-a_{55})_l}+O(\epsilon^2)\right)a_{-,T} \end{pmatrix}.
\end{aligned}
\label{a33func}
\end{equation}
Suppose instead that $E^2 = f(a_{11},a_{33})$. Then as before we have that $\sigma(N^{11}_{eff,+})$ is still elliptic near $\Sigma$ since $\sigma_{-1}(N^{E^2}_+)$ vanishes quadratically on $\Sigma$. Furthermore, $\sigma(N^{33}_{eff,+})$ and $\sigma(N^{11}_{eff,-})$ still have quadratically vanishing principal symbols. Finally, since $\sigma_{-1}(N^{E^2}_-)$ vanishes nondegenerately quadratically on $\Sigma$, it follows that if $\frac{\partial f}{\partial a_{33}}$ is always nonzero, then $\sigma(N^{33}_{eff,-})$ will have nondegenerately quadratically vanishing principal symbol (compared with quartic vanishing of $\sigma_{-1}(N^{33}_-)$ in the non-functional case), with nonvanishing subprincipal symbol as well. Away from $\Sigma$ we can estimate
\begin{equation}
\begin{aligned}
&\begin{pmatrix} \sigma_{-1}(N^{11}_{eff,+}) & \sigma_{-1}(N^{33}_{eff,+}) \\ \sigma_{-1}(N^{11}_{eff,-}) & \sigma_{-1}(N^{33}_{eff,-})\end{pmatrix} =\\
& \begin{pmatrix} (1 + O(\epsilon^2))a_{+,I} - \left(\frac{1}{(a_{11}-a_{55})_l}\tilde{f}_{11}+O(\epsilon^2)\right)a_{+,T} & \left(1 - \frac{1}{(a_{11}-a_{55})_l}\tilde{f}_{33}+O(\epsilon^2)\right)a_{+,T}\\ O(\epsilon^2)a_{-,I}+\left(\frac{1}{(a_{11}-a_{55})_l}\tilde{f}_{11}+O(\epsilon^2)\right) a_{-,T}& \left(\frac{1}{(a_{11}-a_{55})_l}\tilde{f}_{33}+O(\epsilon^2)\right)a_{-,T} \end{pmatrix}.
\end{aligned} \label{e2func}
\end{equation}
Finally, suppose $a_{11} = f(a_{33},E^2)$. In this case we add multiples of $N^{11}_{\pm}$, noting that $N^{11}_+$ is actually elliptic, and hence the argument must be made more carefully. We note, for example, that $\sigma(N^{33}_{eff,+})$ will be elliptic near $\Sigma$ if $\frac{\partial f}{\partial a_{33}}$ is bounded away from zero, and furthermore that since $\frac{\partial G_+}{\partial a_{11}}|_{\xi_T = 0} = 2|\xi|^2 = E^{11}_+|_{\xi_T=0}$ (independent of the parameter values), it follows that we have
\begin{align*} E^{33}_{eff,+}|_{\xi_T = 0} &= \int_0^1{\left(\frac{\partial G_+}{\partial a_{33}}|_{\xi_T = 0} + \frac{\partial G_+}{\partial a_{11}}|_{\xi_T = 0}\frac{\partial f}{\partial a_{33}}\right)(\nu+sr_{\nu})\,ds} \\
&= 2|\xi|^2\int_0^1{\frac{\partial f}{\partial a_{33}}(\nu+sr_{\nu})\,ds} = \tilde{f}_{33}E^{11}_+|_{\xi_T=0}
\end{align*}
and hence
\begin{equation}
\label{n33+1}
\sigma_{-1}(N^{33}_{eff,+})|_{\Sigma} = \tilde{f}_{33}\sigma_{-1}(N^{11}_+)|_{\Sigma}.
\end{equation}
Similarly, we have
\begin{equation}
\label{ne2+1}
\sigma_{-1}(N^{E^2}_{eff,+})|_{\Sigma} = \tilde{f}_{E^2}\sigma_{-1}(N^{11}_+)|_{\Sigma}.
\end{equation}
Furthermore, since $\sigma_{-1}(N^{11}_{-})$ still vanishes quadratically on $\Sigma$, it follows that $\sigma_{-1}(N^{33}_{eff,-})$ and $\sigma_{-1}(N^{E^2}_{eff,-})$ will also vanish quadratically on $\Sigma$. In the case where $\frac{\partial f}{\partial a_{33}}$ and $\frac{\partial f}{\partial E^2}$ are constant, the above arguments give that the subprincipal parts can be written as
\begin{equation}
\label{n33-2}
\sigma_{-2}(N^{33}_{eff,-})|_{\Sigma} = \tilde{f}_{33}\sigma_{-2}(N^{11}_-)|_{\Sigma}
\end{equation}
and
\begin{equation}
\label{ne2-2}
\sigma_{-2}(N^{E^2}_{eff,-})|_{\Sigma} = \sigma_{-2}(N^{E^2}_-)|_{\Sigma} + \tilde{f}_{E^2}\sigma_{-2}(N^{11}_-)|_{\Sigma}.
\end{equation}
In general the expressions will be the same except with the $\tilde{f}_{33}$ and $\tilde{f}_{E^2}$ prefactors replaced by a weighted average of the derivative values evaluated at $\nu+sr_{\nu}$ for $s\in(0,1)$ (so if the differences are known to be small then the expressions for the subprincipal symbol will not differ much from the above expressions). To estimate away from $\Sigma$, we rewrite
\[\frac{\partial G_-}{\partial a_{11}} = \xi_T^2\left(-\frac{2E^2}{a_{11}-a_{55}}+O(\epsilon^2)\right)\]
so that if $\left(\frac{E^2}{a_{11}-a_{55}}\right)_l = \int_0^1{\frac{E^2+sr_{E^2}}{a_{11}+sr_{11}-a_{55}}\,ds}$ then
\[\sigma\left(N^{11}_-\right) = \left(-\left(\frac{E^2}{a_{11}-a_{55}}\right)_l+O(\epsilon^2)\right)a_{-,T}.\]
Then
\begin{equation}
\begin{aligned}
&\begin{pmatrix} \sigma_{-1}(N^{33}_{eff,+}) & \sigma_{-1}(N^{E^2}_{eff,+}) \\ \sigma_{-1}(N^{33}_{eff,-}) & \sigma_{-1}(N^{E^2}_{eff,-})\end{pmatrix} \\
&= \begin{pmatrix} (1 + O(\epsilon^2))(a_{+,T} + \tilde{f}_{33}a_{+,I}) & \left(\frac{-1}{(a_{11}-a_{55})_l}+O(\epsilon^2)\right)a_{+,T}+\left(\tilde{f}_{E^2}+O(\epsilon^2)\right)a_{+,I}\\\left(-\left(\frac{E^2}{a_{11}-a_{55}}\right)_l\tilde{f}_{33}+O(\epsilon^2)\right)a_{-,T}& \left(\frac{1}{(a_{11}-a_{55})_l}-\left(\frac{E^2}{a_{11}-a_{55}}\right)_l\tilde{f}_{E^2}+O(\epsilon^2)\right)a_{-,T}\end{pmatrix}.
\end{aligned}\label{a11func}
\end{equation}
These more quantitative forms of the symbols will be used in Section \ref{recovsec}.

\subsection{Behavior of the operators as scattering operators}
\label{scat-op-sec}

We conclude by analyzing the behavior of the operators associated to matrix-weighted ray transforms viewed as operators in the \emph{Melrose scattering calculus}, as was done in \cite{ti}. We refer the reader to \cite{sslaes,convex,br1,br2,ti} for discussions regarding the properties of the scattering calculus and how to compute the symbol of a scattering operator. The purpose of this computation is to demonstrate that an \emph{additional} complication arises in attempting to follow the artificial boundary approach of \cite{ti} (which in turn follows the approach originally introduced in \cite{convex}), which justifies taking the alternative ``global'' approach in this paper.

We thus take $z=(x,y_1,y_2)$ as our coordinates, with $x$ denoting the boundary-defining function for our boundary $\{x=0\}$ and also strictly convex with respect to the relevant Hamiltonian dynamics, so that our manifold is now $X = \{(x,y)\in\mathbb{R}^3\,|\,x\ge 0\}$. In the formula for the formal adjoint $L$, we replace $\mathbb{S}^2$ with $\mathbb{R}_{\lambda}\times\mathbb{S}^1_{\omega}$, identifying the latter with a subset of the tangent bundle $T_zX$ by the identification $(\lambda,\omega)\mapsto\lambda\partial_x+\omega\cdot\partial_y$. To make the corresponding operator $N$ a scattering operator, we take our cutoff $B(z,\lambda,\omega)$ in the formula defining $L$ to be of the form $x^{-2}\chi_s(\lambda/x)\tilde{B}(z,\lambda,\omega)$ (in the notation of Section \ref{matrixweight}) where $\chi_s\in C_c^{\infty}(\mathbb{R})$, $\chi_s\ge 0$, and $\chi_s(0)>0$. We also conjugate by a factor of $e^{\digamma/x}$, which is equivalent to replacing the weight $A(Z(t,z_0,\xi_0),\Xi(t,z_0,\xi_0))$ by $e^{\digamma/X(t,z_0,\xi_0)}A(Z(t,z_0,\xi_0),\Xi(t,z_0,\xi_0))$ and the factor $\tilde{B}(z,\lambda,\omega)$ by $e^{-\digamma/x}\tilde{B}(z,\lambda,\omega)$. This means that $C(z,t,\lambda,\omega)$ will be of the form
\begin{align*} C(z,t,\lambda,\omega) &= x(z)^{-2}e^{\digamma\left(\frac{1}{X(t,z,\xi(\lambda,\omega))} - \frac{1}{x(z)}\right)}\chi_s\left(\frac{\lambda}{x(z)}\right)\\
&\cdot\chi(z,\lambda,\omega)F(z,t,\lambda,\omega)E^{\nu}(Z(t,z_0,\xi_0),\Xi(t,z_0,\xi_0))
\end{align*}
where $F$ is matrix-valued, but $F|_{t=0}$ is the identity matrix. As was shown in \cite{ti}, the operator associated to the $C$ above is a scattering operator of order $(-1,0)$, whose principal symbol is elliptic away from $\Sigma = \text{span }\overline{\xi}$ in the interior $\{x>0\}$ (though in the scattering cotangent bundle this means away from $(\xi,\eta)$ which satisfy $\xi\frac{dx}{x^2} + \eta\cdot\frac{dy}{x}\in\text{span }\frac{\overline{\xi}}{x}$, i.e. $(\xi,\eta)$ parallel to $(x\overline{\xi}_x,\overline{\xi}_y)$ if we write $\overline{\xi} = \overline{\xi}_x\,dx+\overline{\xi}_y\cdot dy$), as well\footnote{Elliptic in the sense of being nonzero; it does not satisfy the uniform elliptic estimate on $\{x=0\}$ as $(\xi,\eta)\rightarrow\infty$ since it vanishes on $\Sigma$ at fiber infinity.} as at finite points on the boundary $\{x=0\}$. Away from $\{x=0\}$ this can be shown by considering an oscillatory integral of the form
\[\int{e^{i(X(t,z,\lambda,\omega)-x,Y(t,z,\lambda,\omega)-y)\cdot\left(\frac{\xi}{x^2},\frac{\eta}{x}\right)}C(z,t,\lambda,\omega)\,dt\,d\lambda\,d\omega}\]
and analyzing the expression using stationary phase as $(\xi,\eta)\rightarrow\infty$. 
We now show the following:
\begin{proposition}
\label{scatsub}
The subprincipal symbol degenerates near the boundary as a power of $x$ relative to the principal symbol. Thus the arguments in the rest of this paper cannot be directly applied to the scattering situation in \cite{ti}.
\end{proposition}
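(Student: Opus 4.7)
The plan is to repeat the stationary-phase computation of Section \ref{matrixweight}, but in the scattering framework, and track carefully how factors of $x$ (the boundary defining function) enter the principal and subprincipal contributions. Once the computation is written in suitably rescaled variables, both the principal and the subprincipal symbols of the scattering operator can be read off from the asymptotic expansion, and the hoped-for degeneration follows from comparing how $\partial_t$-derivatives of the amplitude and $O(t^2)$ corrections to the trajectory rescale.

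First I would write the Schwartz kernel of $N$ as an oscillatory integral in the scattering phase $(X(t,z,\xi(\lambda,\omega))-x)\cdot\xi/x^{2}+(Y(t,z,\xi(\lambda,\omega))-y)\cdot\eta/x$, with amplitude including the factor $x^{-2}\chi_s(\lambda/x)e^{\digamma(1/X-1/x)}\chi(z,\lambda,\omega)F\cdot E^{\nu}$. The natural change of variables is $\lambda=x\mu$, $t=xs$: the Jacobian $x^{2}\,d\mu\,ds$ exactly cancels the $x^{-2}$ in the amplitude, and Taylor-expanding the trajectory as $X-x = \lambda t + \alpha_{x}(z,\lambda,\omega)t^{2}+O(t^{3})$ and $Y-y = \omega t + \alpha_{y}(z,\lambda,\omega)t^{2}+O(t^{3})$ turns the phase into
\[
\phi_{sc}(s,\mu,\omega;z,\xi,\eta) = \mu s\xi + s\omega\cdot\eta + s^{2}\bigl(\alpha_{x}(z,x\mu,\omega)\xi + x\alpha_{y}(z,x\mu,\omega)\cdot\eta\bigr) + O(s^{3}).
\]
Stationary phase in $(\mu,s)$ with large parameter $|(\xi,\eta)|$ then gives, exactly as in Section \ref{matrixweight}, a nondegenerate critical point $(\mu_c,0)=(-\omega\cdot\eta/\xi,0)$ with Hessian of determinant $-\xi^{2}$ and signature $0$, so the principal symbol is of order $(-1,0)$ and is given (up to a $2\pi/|\xi|$ factor) by the amplitude evaluated at $s=0$, integrated over $\omega\in\mathbb S^{1}$; in particular the cutoff $\chi_{s}(\mu_{c})$ makes this nontrivial at the boundary $x=0$, reproducing the ellipticity claims of \cite{ti}.

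The subprincipal contribution comes from two sources: the order-$1$ Taylor expansion of the amplitude at the critical point via $\langle Q^{-1}D,D\rangle$, and the cubic remainder $R$ in the phase. Here is where the $x$-degeneracy appears. Since the amplitude depends on $s$ only through $X(xs,\cdot)$, $\Xi(xs,\cdot)$, and $F(z,xs,\cdot)$, a factor of $\partial_{s}|_{s=0}$ acting on the amplitude produces, by the chain rule, a factor of $x$ times the corresponding $\partial_{t}|_{t=0}$ derivative that appeared in the interior calculation. Similarly the weight $e^{\digamma(1/X-1/x)}$ has vanishing $\partial_{s}$-derivative of order $x^{0}$ only via $\digamma\partial_{s}(1/X)|_{s=0}$, which again carries an $x$-factor from $\partial_{s}X = x\dot X$. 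The Hessian-based operator $\langle Q^{-1}D,D\rangle$ is $(2/\xi)\partial_{\mu}\partial_{s} - (2/\xi^{2})(\alpha_{x}\xi+x\alpha_{y}\cdot\eta)\partial_{\mu}^{2}$; when this is applied to the amplitude the $\partial_{s}$ in the mixed term again contributes an $x$, while the pure $\partial_{\mu}^{2}$ term hits only $\chi_s(\mu)\chi(z,x\mu,\omega)\xi(x\mu,\omega)$ factors, with the nontrivial parts of the latter two again carrying $x$. Gathering these contributions shows that the subprincipal of the stationary phase expansion has an overall extra factor of $x$ relative to the principal symbol, i.e. the order is $(-2,1)$ rather than $(-2,0)$.

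The hard part will be making the bookkeeping rigorous: one needs to verify that \emph{every} non-$\chi_s$-differentiation term really does produce an $x$, and that the contribution of the cubic phase remainder $R$ also carries at least one factor of $x$ (which it does, since $R$ is either $O(\alpha s^{3})$ in the $Y$-part, with the extra $\eta/x$ producing a net $x$, or the higher Taylor terms of $\alpha$ in the $\mu$ variable which again bring $x$). Once this is checked, the conclusion is precisely that in the parabolic symbol calculus of Section \ref{invparsec} — which demanded a subprincipal term comparable in order to the principal — the scattering version fails uniformly as $x\to 0$, so the stability estimate of Section \ref{recovsec} cannot be run directly, justifying the claim of the proposition. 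A full resolution would require a refined boundary-sensitive variant of the inverse parabolic calculus of Section \ref{invparsec}.
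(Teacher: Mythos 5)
Your overall strategy (rescale $\lambda = x\hat\lambda$, $t = x\hat t$, do stationary phase, and track powers of $x$) is the same as the paper's, but the execution has a genuine gap in the choice of stationary-phase variables and in the $x$-bookkeeping. You perform stationary phase in $(\mu,s)=(\hat\lambda,\hat t)$ with $\omega$ as a parameter, which gives a Hessian of determinant $-\xi^2$; relative to your large parameter $|(\xi,\eta)|$ this is \emph{not} uniformly nondegenerate, because on the characteristic set $(\xi,\eta)$ is parallel to $(x\overline{\xi}_x,\overline{\xi}_y)$, so $\xi/|\eta| = O(x)\to 0$ exactly in the regime the proposition concerns. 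Concretely, the inverse-Hessian correction operator is $\frac1\xi\partial_s\partial_\mu - \frac{\alpha_x\xi + x\alpha_y\cdot\eta}{\xi^2}\partial_\mu^2$, and the prefactors $1/\xi\sim (x|\eta|)^{-1}$ and $1/\xi^2\sim(x|\eta|)^{-2}$ (together with the $2\pi/|\xi|$ in your leading term and the fact that the critical points $\mu_c=-\omega\cdot\eta/\xi$ only lie in $\operatorname{supp}\chi_s$ for an $O(x)$-arc of $\omega$) are nowhere accounted for in your claim that ``each non-$\chi_s$ differentiation produces an $x$''; without tracking them your count of the net power of $x$ does not close, and the expansion itself is not uniform as $x\to0$. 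The paper avoids this by using the geometric input $\xi/|\eta|=O(x)$ on the characteristic set to justify the further substitution $\mu=\omega_\parallel+\frac{\xi}{|\eta|}\hat\lambda$ and then doing stationary phase in $(\hat t,\mu)$ with large parameter $|\eta|$, where the Hessian has determinant $-1$ uniformly; $\hat\lambda$ and $\omega_\perp$ remain integration variables.

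With that setup the two sources of the factor $x$ are different from the ones you name: (i) $\partial_{\hat t}=x\partial_t$ in the mixed term (this part you have), and (ii) the coefficient of $\partial_\mu^2$ is the Hessian entry $\frac{\xi}{|\eta|}\alpha_x + x\frac{\eta}{|\eta|}\cdot\alpha_y$, which is $O(x)$ on the characteristic set precisely because $\xi/|\eta|=O(x)$ there — not, as you assert, because the $\mu$-derivatives hit $\chi(z,x\mu,\omega)$ or $\xi(x\mu,\omega)$ and pick up chain-rule factors of $x$. Moreover, your dichotomy ignores the terms where a derivative lands on $\chi_s(\mu)$, which produce no $x$ at all from differentiation; these are harmless only because on the characteristic set the amplitude contains $E^{\nu}=f^{\nu}\xi_T^2$, which vanishes quadratically on the critical manifold, so every second-order term except the one where both derivatives hit $\xi_T$ vanishes there (this is the content of \eqref{x^2Csub}). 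You never invoke this quadratic vanishing, nor the characteristic-set relation $(\xi,\eta)\parallel(x\overline{\xi}_x,\overline{\xi}_y)$, and both are essential: without them several terms in your expansion would appear to carry no net factor of $x$, so the degeneration you are asked to prove does not follow from the argument as written.
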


We give a sketch of the calculation here. Note from the approximations
\begin{align*} (X(t,z,\lambda,\omega)-x,Y(t,z,\lambda,\omega)-y)
&= (\lambda t + \alpha_xt^2 + O(t^3), \omega t + \alpha_yt^2 + O(t^3)) \\
&= (x^2(\hat\lambda\hat{t}+\alpha_x\hat{t}^2 + O(x\hat{t}^3)),x(\omega\hat{t}+x\alpha_y\hat{t}^2+O(x^2\hat{t}^3)))
\end{align*}
(where $\lambda = x\hat{\lambda}$ and $t = x\hat{t}$) that it suffices to consider the oscillatory integral
\[\int{e^{i(\hat\lambda\hat{t}+\alpha_x\hat{t}^2,\omega\hat{t}+x\alpha_y\hat{t}^2)\cdot(\xi,\eta)}C(x,y,x\hat{t},x\hat{\lambda},\omega)x^2\,d\hat{t}\,d\hat{\lambda}\,d\omega}\]
(note that the $x^2$ from the change of variables cancels with the $x^{-2}$ factor in $C$). Since on the characteristic set we have that $(\xi,\eta)$ is parallel to $(x\overline{\xi}_x,\overline{\xi}_y)$, if we assume that the axis of isotropy does not coincide with $dx$ near the boundary, as was assumed in \cite{ti}, then we have $|\overline{\xi}_y|>\epsilon$ uniformly for some $\epsilon>0$, and hence for $(\xi,\eta)$ in the characteristic set, we have $\frac{\xi}{|\eta|} = x\frac{\overline{\xi}_x}{|\overline{\xi}_y|}\rightarrow 0$ as $x\rightarrow 0$. Thus we may take $\xi$ to be small compared to $\eta$. In that case, decompose $\omega = (\omega_{\parallel},\omega_{\perp})$ where $\omega_{\parallel}$ is parallel to $\eta$, i.e. write $\omega = \omega_{\parallel}\frac{\eta}{|\eta|} + \sqrt{1-\omega_{\parallel}^2}\omega_{\perp}$, $\omega_{\perp}\in\eta^{\perp}\cap\mathbb{S}^1$ (so the set of possible $\omega^{\perp}$ can be identified with $\mathbb{S}^0$, i.e. two points). Then $\eta\cdot\omega = |\eta|\omega_{\parallel}$, $d\omega = (1-\omega_{\parallel}^2)^{-1/2}$, and overall the phase becomes
\[|\eta|\left(\left(\omega_{\parallel} +\frac{\xi}{|\eta|}\hat{\lambda}\right)\hat{t} + \left(\frac{\xi}{|\eta|}\alpha_x+\frac{\eta}{|\eta|}\cdot x\alpha_y\right)\hat{t}^2\right).\]
Thus the integral becomes
\[\int_{\mathbb{R}\times\mathbb{S}^0}{\left(\int_{[-1,1]\times\mathbb{R}}{e^{i|\eta|\left(\left(\omega_{\parallel} +\frac{\xi}{|\eta|}\hat{\lambda}\right)\hat{t} + \left(\frac{\xi}{|\eta|}\alpha_x+\frac{\eta}{|\eta|}\cdot x\alpha_y\right)\hat{t}^2\right)}\tilde{C}(x,y,x\hat{t},x\hat{\lambda},\omega)\,d\mu_{\parallel}\,d\hat{t}}\right)\,d\hat{\lambda}\,d\omega_{\perp}}\]
where
\[\tilde{C}(x,y,\lambda,\omega,t)  = x^2C(x,y,\lambda,\omega,t)(1-\omega_{\parallel}^2)^{-1/2}.\]
The phase is then $Q(\hat{t},\omega_{\parallel}+\frac{\xi}{|\eta|}\hat{\lambda})/2$ where $Q$ is the quadratic form associated to the matrix
\[A = \begin{pmatrix} 2\left(\frac{\xi}{|\eta|}\alpha_x+\frac{\eta}{|\eta|}\cdot x\alpha_y\right) & 1 \\ 1 & 0 \end{pmatrix}\quad\left(\text{so }A^{-1} = \begin{pmatrix} 0 & 1 \\ 1 & -2\left(\frac{\xi}{|\eta|}\alpha_x+\frac{\eta}{|\eta|}\cdot x\alpha_y\right)\end{pmatrix}\right).\]
It follows that the stationary point is at $\{\omega_{\parallel} = -\frac{\xi}{|\eta|}\hat\lambda,\hat{t} = 0\}$ (note the first condition is equivalent to $\xi\hat{\lambda} + \omega\cdot\eta = 0$), and thus by stationary phase this is $a_{-1}|\eta|^{-1} + a_{-2}|\eta|^{-2} + O(|\eta|^{-3})$ where
\[ a_{-1} = 2\pi\int_{\mathbb{R}\times \mathbb{S}^0}{\tilde{C}(x,y,0,x\hat{\lambda},\omega)|_{\omega_{\parallel} = -\frac{\xi}{|\eta|}\hat\lambda}\,d\hat{\lambda}\,d\omega_{\perp}} = 2\pi\int_{\{\xi\hat\lambda + \eta\cdot\omega = 0\}}{\tilde{C}(x,y,0,x\hat{\lambda},\omega)\,d\mathbb{S}^0(\omega)\,d\hat{\lambda}}.
\]
and
\begin{align*} a_{-2} &= 2\pi i\int_{\mathbb{R}\times \mathbb{S}^0}\left(\partial_{\hat{t}}\partial_{\omega_{\parallel}} - \left(\frac{\xi}{|\eta|}\alpha_x+\frac{\eta}{|\eta|}\cdot x\alpha_y\right)\partial^2_{\omega_{\parallel}}\right)\left.\left(\tilde{C}(x,y,x\hat{t},x\hat{\lambda},\omega)\right)\middle|\right._{\omega_{\parallel} = -\frac{\xi}{|\eta|}\hat\lambda,\hat{t}=0}\,d\hat{\lambda}\,d\omega_{\perp}\\
&=2\pi i\int_{\{\xi\hat\lambda+\eta\cdot\omega = 0\}}\left(x\partial_t\partial_{\omega_{\parallel}} - \left(\frac{\xi}{|\eta|}\alpha_x+\frac{\eta}{|\eta|}\cdot x\alpha_y\right)\partial^2_{\omega_{\parallel}}\right)\left.\left(\tilde{C}(x,y,0,x\hat{\lambda},\omega)\right)\middle|\right._{\omega_{\parallel}=-\frac{\xi}{|\eta|}\hat\lambda}\,d\mathbb{S}^0(\omega)\,d\hat{\lambda}.
\end{align*}
Note that
\[
\tilde{C}(x,y,0,x\hat{\lambda},\omega) = \chi_s(\hat{\lambda})\chi(x,y,x\hat{\lambda},\omega)E^{\nu}(x,y,\xi(x\hat{\lambda},\omega))(1-\omega_{\parallel}^2)^{-1/2},
\]
from which it follows that $a_{-1}$ will not vanish away from the characteristic set $\{(\xi,\eta)\,|\,\xi\frac{dx}{x^2}+\eta\cdot\frac{dy}{x}\in\text{span }\frac{\overline{\xi}}{x}\}$ as $x\rightarrow 0$ (though it will otherwise be $O(1)$). On the other hand, for $(\xi,\eta)$ in the characteristic set, i.e. parallel to $(x\overline{\xi}_x,\overline{\xi}_y)$, we have that the subprincipal coefficient is
\begin{align*} a_{-2} &= 2\pi i\int_{\left\{\frac{\overline{\xi}}{x}\cdot(\lambda\partial_x+\omega\cdot\partial_y) = 0\right\}}\left(x\partial_t\partial_{\mu} - \left(\frac{x\overline{\xi}_x}{\left|\overline{\xi}_y\right|}\alpha_x+\frac{\overline{\xi}_y}{\left|\overline{\xi}_y\right|}\cdot x\alpha_y\right)\partial^2_{\mu}\right)\left.\left(\tilde{C}(x,y,0,x\hat{\lambda},\omega)\right)\middle|\right._{\omega_{\parallel}=-\frac{\xi}{|\eta|}\hat\lambda}\,d\mathbb{S}^0(\omega)\,d\hat{\lambda} \\
&= 2\pi i x\int_{\left\{\frac{\overline{\xi}}{x}\cdot(\lambda\partial_x+\omega\cdot\partial_y) = 0\right\}}\left(\partial_t\partial_{\mu} - \left(\frac{\overline{\xi}_x}{\left|\overline{\xi}_y\right|}\alpha_x+\frac{\overline{\xi}_y}{\left|\overline{\xi}_y\right|}\cdot \alpha_y\right)\partial^2_{\mu}\right)\left.\left(\tilde{C}(x,y,0,x\hat{\lambda},\omega)\right)\middle|\right._{\omega_{\parallel}=-\frac{\xi}{|\eta|}\hat\lambda}\,d\mathbb{S}^0(\omega)\,d\hat{\lambda}.
\end{align*}
Noting that the terms inside the integral are $O(1)$ as $x$ goes to $0$, it follows that $a_{-2}$ restricted to the characteristic set will vanish at a rate of $x$ as $x$ goes to $0$. It follows that the subprincipal symbol, while not vanishing away from $x=0$, will vanish at a rate of $x$ relative to the principal symbol as $x\rightarrow 0$.

\section{Symbols of Inverse Parabolic Type}
\label{invparsec}
For the operators $N^{11}_-$ (if $E^2>0$), $N^{33}_+$, and $N^{E^2}_{\pm}$, the scalar part of the symbol (modulo a factor of $|\zeta|^{-3}$) is thus ``parabolic'': it is second order elliptic except on a fiber-dimension 1 subset, where it has a nondegenerate purely imaginary order 1 subprincipal term. The prototypical example of such a symbol is $|\xi|^2+i\tau$ on $T^*(\mathbb{R}^{n-1}_x\times\mathbb{R}_t)$, the symbol of the heat operator $\partial_t-\Delta$. It is easy to show that the inverse of the heat symbol is a $(1/2,0)$ symbol of order $-1$, i.e. that $\frac{1}{|\xi|^2+i\tau}$ satisfies the estimates
\[\left|D^{\beta}_{(\xi,\tau)}D^{\alpha}_x\left(\frac{1}{|\xi|^2+i\tau}\right)\right|\le C|(\xi,\tau)|^{-1-|\beta|/2}.\]
Such symbolic estimates allows one to construct a parametrix for the heat operator which belongs to $\Psi^{-1}_{1/2,0}(\mathbb{R}^{n-1}_x\times\mathbb{R}_t)$, which in turn is one way to obtain standard parabolic regularity estimates. Boutet de Monvel \cite{bdm} generalized this idea by developing a symbol and pseudodifferential calculus to construct parametrices for certain hypoelliptic operators with double characteristics (i.e. the principal symbol vanishes to second order on the characteristic set), which contains the parametrix for the heat operator above. We will use this calculus to construct parametrices for our operators, which are of ``parabolic'' type.

We first review the calculus constructed by Boutet de Monvel; the full proofs of all statements in this section can be found in the original paper \cite{bdm}. Thus, consider a conic subset $\Sigma$ of $T^*M\backslash o$, say of codimension $\nu$, where $M$ is an $n$-dimensional manifold and $o$ is the zero section. Locally we can choose coordinates\footnote{The coordinate $p$ was called $x$ in \cite{bdm}; we will not use $x$ here in order to reserve its use for a base variable.} $(p,y,r) = (p_1,\dots,p_{\nu},y_1,\dots,y_{2n-1-\nu},r)$ where $p_i$, $y_i$ are homogeneous of degree $0$ and $r$ is homogeneous of degree $1$ such that $\Sigma = \{p_i = 0\}$. We then let
\[d_{\Sigma}^2 = |p|^2 + \frac{1}{r}.\]
Note that if different coordinates were chosen, then $d_{\Sigma}$ would change by a positive smooth multiple.

For example, if $\Sigma = \text{span }dx_n = \{\zeta' = 0\}\subset T^*\mathbb{R}^n$ where $\zeta' = (\zeta_1,\dots,\zeta_{n-1})$, then we can choose $p_i = \frac{\zeta_i}{|\zeta|}$ for $1\le i\le n-1$, $y_i = x_i$, and $r = |\zeta|$. In this case, we have
\[d_{\Sigma}^2 = \frac{|\zeta'|^2}{|\zeta|} + \frac{1}{|\zeta|} = \frac{|\zeta|'^2+|\zeta|}{|\zeta|^2}.\]
For the $\Sigma$ relevant in our problem, since it has an integrable kernel, it follows that every point in $\mathbb{R}^3$ admits local coordinates on a neighborhood where we can write $\Sigma$ in the above form, and hence we can take $d_{\Sigma}$ to be defined as above.

\begin{remark}
If we consider the fiber-compactified cotangent bundle $\overline{T^*M}$ and consider $\partial\Sigma = \overline{\Sigma}\cap\partial\overline{T^*M}\subset\overline{T^*M}$ (i.e. ``$\Sigma$ at fiber infinity''), then $d_{\Sigma}$ is a boundary-defining function for the front face of the parabolic blow-up of $\partial\Sigma$ in $\overline{T^*M}$. Indeed, the standard boundary-defining functions for $\partial\Sigma$ are given by $p = (p_1,\dots,p_{\nu})$ and $1/r$, so if we blow up $\partial\Sigma$ with respect to the coordinates $p$ and $1/r^{1/2}$, then $d_{\Sigma}^2 = |p|^2 + (1/r^{1/2})^2$, i.e. a boundary-defining function for the front face.
\end{remark}

Recall that a vector field $V$ on $T^*M\backslash o$ is \emph{homogeneous of degree $\nu$} if $\tau^*(Vf) = \tau^{\nu}V(\tau^*f)$ for all $f\in C^{\infty}(V)$ and $\tau\in\mathbb{R}_+$, where we identify elements of $\mathbb{R}_+$ with their dilation action on $T^*M$. Such vector fields can locally be written as $a(x,\xi)\cdot\partial_x+b(x,\xi)\cdot\partial_{\xi}$ where $a$ is homogeneous of degree $\nu$ and $b$ is homogeneous of degree $\nu+1$. Note that the commutator of two vector fields which are homogeneous of degrees $\nu_1$ and $\nu_2$ is homogeneous of degree $\nu_1+\nu_2$. Moreover, if for $\Sigma\subset T^*M$ we let $\mathcal{V}(\Sigma)$ denote the vector fields which are homogeneous of degree $0$ which are also tangent to $\Sigma$, then we have that the commutator of two vector fields in $\mathcal{V}(\Sigma)$ is also in $\mathcal{V}(\Sigma)$, i.e. $\mathcal{V}(\Sigma)$ forms a Lie algebra.
We can now define the symbol class, as follows:
\begin{definition}
Let $m,k\in\mathbb{R}$. The space $S^{m,k}(T^*M,\Sigma)$ is the set of all $a\in C^{\infty}(T^*M;\mathbb{C})$ satisfying the property that whenever $W^{\alpha} = W^{\alpha_1}\dots W^{\alpha_{|\alpha|}}$ is a product of vector fields on $T^*M\backslash o$ homogeneous of degree $0$, and $V^{\beta} = V^{\beta_1}\dots V^{\beta_{|\beta|}}$ is a product of vector fields in $\mathcal{V}(\Sigma)$, that (recalling the local coordinates $(p,y,r)$ described above) we have the local estimate
\[|W^{\alpha}V^{\beta}a|\le Cr^md_{\Sigma}^{k-|\alpha|}.\]
\end{definition}
Roughly speaking $S^{m,k}$ are symbols of order $m$ whose principal part vanishes to order $k$ on $\Sigma$, with the subprincipal symbols of order less than $k/2$ lower also vanishing as well.
We list several properties of this symbol class:
\begin{itemize}
\item It is an algebra, and in particular $a\in S^{m,k}(T^*M,\Sigma)$, $b\in S^{m',k'}(T^*M,\Sigma)\implies ab\in S^{m+m',k+k'}(T^*M,\Sigma)$. (This just follows from the Leibniz rule.)
\item We have that 
\[S^{m,k}(T^*M,\Sigma)\subset S^{m+k_-/2}_{1/2}(T^*M),\]
where $k_- = \max(0,-k)$ and $S_{1/2}$ is the $(1/2,1/2)$ symbol class of H\"ormander. (This is a correction to the statement in \cite{bdm} before Example 1.4, where the sign is flipped.) Indeed, notice that $d_{\Sigma}\ge r^{-1/2}$; on the other hand, away from a neighborhood of the zero section we have $r>\epsilon$, while we are free to take the defining functions $p$ for $\Sigma$ to be bounded as well since $p$ is homogeneous of degree $0$, so $d_{\Sigma}\le C$, say locally in the base away from the zero section. This implies $d_{\Sigma}^s\le r^{-s/2}$ if $s\le 0$ and $d_{\Sigma}^s\le C$ if $s>0$; thus for any $k$ and any $\alpha$ we have\footnote{If $k-|\alpha|\le 0$ then $d_{\Sigma}^{k-|\alpha|}\le r^{-(k-|\alpha|)/2}\le r^{(k_-+|\alpha|)/2}$ for $r\ge 1$, since $-k\le k_-$ by definition. If $k-|\alpha|>0$, then $d_{\Sigma}^{k-|\alpha|}\le C$ and $k>0\implies k_-=0\implies r^{(k_-+|\alpha|)/2} = r^{|\alpha|/2}\ge 1\ge C^{-1}d_{\Sigma}^{k-|\alpha|}$.} $d_{\Sigma}^{k-|\alpha|}\le Cr^{(k_-+|\alpha|)/2}$. 
\item More generally, by the same logic above we have 
\begin{equation}
\label{smkinclude}
\begin{aligned}
&S^{m,k}(T^*M,\Sigma)\subset S^{m',k'}(T^*M,\Sigma)\\
&\text{iff }m\le m'\text{ and }m-k/2\le m'-k'/2.
\end{aligned}
\end{equation}
\item If $V\in\mathcal{V}(\Sigma)$ and $a\in S^{m,k}(T^*M,\Sigma)$, then $Va\in S^{m,k}(T^*M,\Sigma)$. If $W$ is homogeneous of degree $0$ (but not necessarily tangent to $\Sigma$), then $Wa\in S^{m,k-1}(T^*M,\Sigma)$. In particular, if $\tilde{W}$ is homogeneous of degree $-1$ (e.g. the standard $\xi$ derivatives) then $\tilde{W}a\in S^{m-1,k-1}(T^*M,\Sigma)\subset S^{m-1/2,k}(T^*M,\Sigma)$ by the above comment.
\item The standard $(1,0)$ symbol class $S_{1,0}^m$ is contained in $S^{m,0}(T^*M,\Sigma)$. On the other hand, if the symbol vanishes appropriately on $\Sigma$, then we can say more. In fact, if $a\sim\sum{a_{m-j/2}}$ with $a_{m-j/2}$ homogeneous of degree $m-j/2$, and $a_{m-j/2}$ vanishes of order at least $k-j$ on $\Sigma$ for all $0\le j<k$, then $a\in S^{m,k}(T^*M,\Sigma)$. In particular, if $k=1$ or $2$, and $a$ is a classical symbol (so $a\sim a_m + a_{m-1} + \dots$), then $a_m$ vanishing to order $k$ on $\Sigma$ implies that $a\in S^{m,k}(T^*M,\Sigma)$.
\end{itemize}
Note that the symbol class is invariant under diffeomorphisms. Since $S^{m,k}(T^*M,\Sigma)\subset S^{m-k_-/2}_{1/2}(T^*M)$, these symbols can be quantized to $\Psi$DOs which are bounded from $H^s$ to $H^{s-m-k_-/2}$. For $a\in S^{m,k}(T^*M,\Sigma)$, let $a(x,D)$ denote (a) corresponding quantization, and denote $\Psi^{m,k}(M,\Sigma)$ the collection of all such operators. Then $\Psi^{m,k}(M,\Sigma)$ is defined independently of coordinates as well, in the sense that for any $A\in\Psi^{m,k}(M,\Sigma)$ and any local coordinates $(U,x)$ and cutoff $\chi\in C_c^{\infty}(U)$ we have $x^*\chi A\chi(x^{-1})^* = a(x,D)$ for some $a\in S^{m,k}(T^*x(U),\Sigma^x)$, where $\Sigma^x$ is the image $\Sigma$ under the symplectomorphism obtained by lifting the coordinate map $x$ to the cotangent bundle (so explicitly $\Sigma^x = \{(x_0,\xi)\in x(U)\times\mathbb{R}^n\,:\,\sum{\xi_i\,dx_i}|_{x_0}\in\Sigma\}$). Furthermore, if $A = a(x,D) = \tilde{a}(\tilde{x},D)$, i.e. we have two symbols quantizing the same operator under different coordinates, then (viewed as functions on $T^*M$) we have $a = \tilde{a}$ modulo a symbol in $S^{m-1/2,k-1}(T^*M,\Sigma)$. Note however that while this error is $1/2$ order better away from $\Sigma$, near $\Sigma$ the error is still of the same size as the original symbol.

This can be fixed in our case, where $\Sigma$ is a line subbundle of $T^*M$ with an integrable kernel. This means that for every $x_0$ there is a function $f$ such that $df|_{x_0} \ne 0$ and, for $x$ near $x_0$, we have
\[\ker\Sigma_x = T_xf^{-1}(\{f(x)\}),\]
i.e. $f$ labels the leaves of a foliation where, for every $x$, we have that the kernel $\ker\Sigma_x$ of the fiber of $\Sigma$ at $x$ coincides with the tangent space of the leaf at $x$. If $\Sigma$ is given as the span of a covector field $\overline{\xi}$, then this implies that $\overline{\xi}$ is a local smooth multiple of $df$, so if $\overline\xi$ is normalized in an appropriate manner, then $\overline{\xi} = df/|df|$. Conversely, if $\overline\xi$ is a smooth multiple of a closed 1-form, then its kernel is integrable, by Poincar\'e's Lemma.

In this case, we can consider charts $(U,x)$ where the last coordinate labels the leaves of the foliation, i.e. $x_n = f$ where $f$ satisfies the properties above, so that $\Sigma = \text{span }dx_n$ on $U$; such charts will be called \emph{foliated charts}. We will show below that symbols quantizing the same operator by foliated charts will differ by an element of $S^{m-1,k-1}$, a $1/2$ order improvement over the general case. Since this does not appear to be discussed in \cite{bdm}, we explain the details below.

Suppose that $(U,x)$ and $(U,y)$ are both foliated charts, and let $\varphi = y\circ x^{-1}$. Since $dy_n = \sum_{j=1}^n{\frac{\partial\varphi_n}{\partial x_j}dx_j}$, it follows from $\text{span }dx_n = \text{span }dy_n$ that $\frac{\partial\varphi_n}{\partial x_j} = 0$ for $j\ne n$. Let $\Sigma^x$ and $\Sigma^y$ denote the images of $\Sigma$ under the symplectomorphisms obtained by lifting the coordinate maps $x$ and $y$ to the cotangent bundle. Let $a(y,\eta)$ be a symbol in $S^{m,k}(T^*y(U),\Sigma^y)$ (say with spatial compact support in $y(U)$ so that we are free to view it as a symbol on $\mathbb{R}^n$), and $A$ be given by the left quantization of $a$. We study the symbol $b(x,\xi)$ of $B = \varphi^*A(\varphi^{-1})^*$. We review the so-called ``Kuranishi trick'': we write
\begin{align*}&\varphi^*A(\varphi^{-1})^*u(x) \\
&= (2\pi)^{-n}\int{e^{i(\varphi(x)-y)\cdot\eta}a(\varphi(x),\eta)u(\varphi^{-1}(y))\,dy\,d\eta} \\
&= (2\pi)^{-n}\int{e^{i(\varphi(x)-\varphi(x'))\cdot\eta}a(\varphi(x),\eta)u(x')|\det D\varphi(x')|\,dx'\,d\eta}\\
&= (2\pi)^{-n}\int{e^{i(F(x,x')(x-x'))\cdot\eta}a(\varphi(x),\eta)u(x')|\det D\varphi(x')|\,dx'\,d\eta} \\
&= (2\pi)^{-n}\int e^{i(x-x')\cdot\xi}a(\varphi(x),(F(x,x')^T)^{-1}\xi)|\det D\varphi(x')|\\
&\phantom{(2\pi)^{-n}\int}|\det F(x,x')|^{-1}u(x')\,dx'\,d\xi
\end{align*}
where we make the substitution $y=\varphi(x')$ and use the fact that
\[\varphi(x)-\varphi(x') = F(x,x')(x-x'),\quad F(x,x') = \int_0^1{D\varphi(tx+(1-t)x')\,dt}\]
(i.e. $F_{ij}(x,x')=\int_0^1{\partial_j\varphi_i(tx+(1-t)x')\,dt}$; note that $F(x,x) = D\varphi(x)$) to write
\[(\varphi(x)-\varphi(x'))\cdot\eta = F(x,x')(x-x')\cdot\eta = (x-x')\cdot F(x,x')^T\eta;\]
we then make the substitution $\xi = F(x,x')^T\eta$ in the final step. Thus we have
\[\varphi^*A(\varphi^{-1})^*(x,x') = (2\pi)^{-n}\int{e^{i(x-x')\cdot\xi}\tilde{b}(x,x',\xi)\,d\xi}\]
where
\[\tilde{b}(x,x',\xi) = a(\varphi(x),(F(x,x')^T)^{-1}\xi)|\det D\varphi(x')||\det F(x,x')|^{-1}.\]
We now use the fact that
\[(2\pi)^{-n}\int{e^{i(x-x')\cdot\xi}\tilde{b}(x,x',\xi)\,d\xi} = (2\pi)^{-n}\int{e^{i(x-x')\cdot\xi}b(x,\xi)\,d\xi}\]
where
\[b(x,\xi)\sim\sum_{\alpha}{\frac{(-i)^{|\alpha|}}{\alpha!}\partial_{\xi}^{\alpha}\partial_{x'}^{\alpha}\tilde{b}(x,x,\xi)}\]
to study the effects of various vector fields on $b$ in terms of those effects on $a$.

We first note that since $F_{ij}(x,x')=\int_0^1{\partial_j\varphi_i(tx+(1-t)x')\,dt}$ and $\partial_j\varphi_n = 0$ for $j\ne 0$, it follows that $F_{nj}\equiv 0$ for $j\ne n$. Thus $F$ has the block matrix form $\begin{pmatrix} * & * \\ 0 & * \end{pmatrix}$ where the blocks are with respect to separating the first $n-1$ variables from the last variable, and hence $(F^T)^{-1}$ has the block form $\begin{pmatrix} * & 0 \\ * & * \end{pmatrix}$.

We next study the functions $\partial_{\xi}^{\alpha}\partial_{x'}^{\beta}(a(\varphi(x),(F(x,x')^T)^{-1}\xi))$ (the other two terms in the product defining $\tilde{b}$ will not affect the differential behavior very much.) We note that applying derivatives in $x'$ results in a sum of quantities which are applications of vector fields of the form
\[\partial_{x'}^{\gamma}(F(x,x')^T)^{-1}\xi\cdot\partial_{\eta}\]
to $a$, evaluated at $(\varphi(x),(F(x,x')^T)^{-1}\xi)$. Since $(F^T)^{-1}_{jn}\equiv 0$ for $j\le n$, it follows that the same is true of the derivatives: $\partial_{x'}^{\gamma}(F(x,x')^T)^{-1}_{jn}\equiv 0$. It follows that
\begin{align*} \partial_{x'}^{\gamma}(F(x,x')^T)^{-1}\xi\cdot\partial_{\eta} 
&= \sum_{jk}{\partial_{x'}^{\gamma}(F(x,x')^T)^{-1}_{jk}\xi_k\partial_{\eta_j}} \\
&=\left(\sum_{j=1}^{n-1}\sum_{k=1}^{n-1}{\partial_{x'}^{\gamma}(F(x,x')^T)^{-1}_{jk}\xi_k\partial_{\eta_j}}\right) + \sum_{k=1}^n{\partial_{x'}^{\gamma}(F(x,x')^T)^{-1}_{nk}\xi_k\partial_{\eta_n}}.
\end{align*}
If we let $\xi = D\varphi(x)\eta$, then the above provides a vector field tangent to $\Sigma^y$ for all $(x,x')$, since for $k\ne n$ we have that $\xi_k$ is a combination of $\eta_l$ for $l\ne n$. In particular, evaluating at $x'=x$ gives
\[\partial_{x'}^{\beta}(a(\varphi(x),(F(x,x')^T)^{-1}\xi))|_{x'=x} = \sum{(V^{\beta'}a)(\varphi(x),(D\varphi(x)^T)^{-1}\xi)}\]
where $V^{\beta'}$ is a product of vector fields in $\mathcal{V}(\Sigma^y)$. Then taking derivatives in $\xi$ results in application of vector fields of the form $\sum{(D\varphi(x)^T)^{-1}_{jk}\partial_{\eta_j}}$, i.e. smooth in $x$ times one $\eta$ derivative. Thus, we have that $\partial_{\xi}^{\alpha}\partial_{x'}^{\beta}(a(\varphi(x),(F(x,x')^T)^{-1}\xi))|_{x'=x}$ is a sum of terms of the form
\[(\text{smooth function on }U)\times(\partial_{\eta}^{\alpha'}V^{\beta'}a)(\varphi(x),(D\varphi(x)^T)^{-1}\xi)\]
where $|\alpha'| = |\alpha|$ and $V^{\beta'}$ is a product of vector fields in $\mathcal{V}(\Sigma^y)$. Since $\partial_{\eta}^{\alpha'}V^{\beta'}a\in S^{m-|\alpha|,k-|\alpha|}(T^*y(U),\Sigma_y)$, and $(x,\xi)\mapsto (\varphi(x),(D\varphi(x)^T)^{-1}\xi) = (\varphi(x),(D\varphi(x)^T)^{-1}\xi)$ is precisely the symplectomorphism obtained by lifting the diffeomorphism $\varphi$, it follows that terms of the above form belong to $S^{m-|\alpha|,k-|\alpha|}(T^*x(U);\Sigma_x)$.

Finally, since
\begin{align*}&\partial^{\alpha}_{\xi}\partial^{\alpha}_x\left(a(\varphi(x),(F(x,x')^T)^{-1}\xi)|\det D\varphi(x')||\det F(x,x')|^{-1}\right)|_{x'=x} \\
&= \sum_{\beta\le\alpha}\left[\binom{\alpha}{\beta}\partial^{\alpha}_{\xi}\partial^{\beta}_{x'}\left(a(\varphi(x),(F(x,x')^T)^{-1}\xi)\right)\cdot\partial_{x'}^{\alpha-\beta}\left(|\det D\varphi(x')||\det F(x,x')|^{-1}\right)|_{x'=x}\right],
\end{align*}
with the term $\partial_{x'}^{\alpha-\beta}\left(|\det D\varphi(x')||\det F(x,x')|^{-1}\right)|_{x'=x}$ a smooth function in $x$, it follows that 
\begin{align*} &\partial^{\alpha}_{\xi}\partial^{\alpha}_x\left(a(\varphi(x),(F(x,x')^T)^{-1}\xi)|\det D\varphi(x')||\det F(x,x')|^{-1}\right)|_{x'=x}\\
&\in S^{m-|\alpha|,k-|\alpha|}(T^*x(U);\Sigma_x)
\end{align*}
as well.

Thus, we have that $b(x,\xi)\sim\sum_{\alpha}{b_{\alpha}(x,\xi)}$ where $b_{\alpha}\in S^{m-|\alpha|,k-|\alpha|}(T^*x(U);\Sigma^x)$. For $0\le j'\le j$ we have $S^{m-j,k-j}\subset S^{m-j+j'/2,k-j+j'}$, so we have both $S^{m-|\alpha|,k-|\alpha|}\subset S^{m-1,k-1}$ for $|\alpha|\ge 1$ and $S^{m-|\alpha|,k-|\alpha|}\subset S^{m-|\alpha|/2,k}$. Thus, the terms in the asymptotic expansion really are ``lower order'' both away from and near $\Sigma$. In particular, $b$ differs from $b_0(x,\xi) = a(\varphi(x),(D\varphi(x)^T)^{-1}\xi)$ (i.e. $a$ evaluated at the appropriate covector given by the change of coordinates) by an element of $S^{m-1,k-1}(T^*M,\Sigma)$. Thus in this case we can define the principal symbol $\sigma:\Psi^{m,k}(M,\Sigma)\rightarrow S^{m,k}(T^*M,\Sigma)/S^{m-1,k-1}(T^*M,\Sigma)$ by having $\sigma(A)$ be the representative class of any symbol which quantizes $A$ with respect to \emph{foliated} charts.

With this notion of principal symbol, we can establish the composition rule
\[ A\in\Psi^{m,k}(M,\Sigma), B\in\Psi^{m',k'}(M,\Sigma) \implies AB\in\Psi^{m+m',k+k'}(M,\Sigma)\quad\text{with }\sigma(AB) = \sigma(A)\sigma(B)
\]
so that in particular $AB$ differs from any quantization of $\sigma(A)\sigma(B)$ by an element quantized by a symbol in $S^{m+m'-1,k+k'-1}(T^*M,\Sigma)$. To do so, we note that $A$ and $B$ are pseudolocal, and hence so is their composition, so it only suffices to check that $AB$ is locally quantized by an element of $S^{m+m',k+k'}(T^*M,\Sigma)$. Thus, if $(U,x)$ is a foliated chart, and $\chi\in C_c^{\infty}(U)$, and $A$ and $B$ are locally quantized in this chart by $a$ and $b$ in the sense that $\chi x^*A(x^{-1})^*\chi = \chi a(x,D)\chi$ and similarly for $b$, then
\[
\chi x^*AB(x^{-1})^*\chi = \chi(a\# b)(x,D)\chi, \quad\text{with }a\# b(x,\xi)\sim\sum_{\alpha}{\frac{(-i)^{|\alpha|}}{\alpha!}\partial^{\alpha}_{\xi}a(x,\xi)\partial^{\alpha}_xb(x,\xi)},
\]
where $a\in S^{m,k}(T^*M,\Sigma)$ and $b\in S^{m',k'}(T^*M,\Sigma)$. Note that $\partial^{\alpha}_{\xi}a(x,\xi)\in S^{m-|\alpha|,k-|\alpha|}(T^*M,\Sigma)$ while $\partial^{\alpha}_xb(x,\xi)\in S^{m',k'}(T^*M,\Sigma)$ since $x$ derivatives are tangent to $\Sigma$ with respect to the coordinates chosen; hence their product belongs to $S^{m+m'-|\alpha|,k+k'-|\alpha|}(T^*M,\Sigma)$. This shows that $a\# b$ belongs to $S^{m+m',k+k'}(T^*M,\Sigma)$ and agrees with $ab$ up to a symbol in $S^{m+m'-1,k+k'-1}$, as desired.

Perhaps the most important property of this symbol class is that it contains the inverse of parabolic symbols like that of the heat operator. Indeed, if $p_m\in S^m$ is nonnegative and vanishes nondegenerately quadratically on $\Sigma$, and $p_{m-1}\in S^{m-1}$ is real-valued and is elliptic on $\Sigma$, then we have the key estimate
\[|p_m+ip_{m-1}|\ge c\left(|\zeta|^m\frac{|\zeta'|^2}{|\zeta|^2}+|\zeta|^{m-1}\right) = cr^md_{\Sigma}^2.\]
More generally we have the following:
\begin{proposition}
Suppose $p\in S^{m,k}(T^*M,\Sigma)$ satisfies the lower bound $|p|\ge cr^md_{\Sigma}^k$. Then $1/p\in S^{-m,-k}(T^*M,\Sigma)$.
\end{proposition}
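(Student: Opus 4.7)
The plan is to show that $1/p$ satisfies the symbol estimates defining $S^{-m,-k}(T^*M,\Sigma)$ directly, via a Faà di Bruno / Leibniz-style expansion combined with the given lower bound on $|p|$.

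First I would note that $1/p$ is a well-defined smooth function on $T^*M \setminus o$: the lower bound $|p| \ge cr^m d_\Sigma^k$ combined with $p \in C^\infty$ forces $p$ to be nonvanishing there, so $1/p$ is smooth. It remains to establish the estimates $|W^\alpha V^\beta (1/p)| \le Cr^{-m} d_\Sigma^{-k-|\alpha|}$ for any products $W^\alpha$ of homogeneous degree $0$ vector fields and $V^\beta$ of vector fields in $\mathcal{V}(\Sigma)$. The base case $|\alpha|=|\beta|=0$ is immediate from the hypothesis.

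For the inductive step, let $X^\gamma = X_1 \cdots X_N$ be any product of $N$ vector fields, each either of type $W$ or of type $V$. Differentiating the identity $p \cdot (1/p) = 1$ repeatedly (or equivalently applying Faà di Bruno to $1/p = g \circ p$ with $g(z)=1/z$) gives an expansion
\[
X^\gamma(1/p) \;=\; \sum_{\pi} c_\pi \, \frac{\prod_{B \in \pi} X^B(p)}{p^{|\pi|+1}}
\]
where $\pi$ ranges over partitions of the ordered index set $\{1,\dots,N\}$ into nonempty blocks $B$, and $X^B = \prod_{i \in B} X_i$ in the induced order. The combinatorial structure is standard; all terms have the form (product of derivatives of $p$)/(power of $p$). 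Now for each block $B$, let $\alpha_B$ denote the number of $W$-type factors in $X^B$; since the type of each vector field is preserved by the partition, $\sum_{B \in \pi} |\alpha_B| = |\alpha|$. The hypothesis $p \in S^{m,k}(T^*M,\Sigma)$ gives $|X^B(p)| \le C_B \, r^m d_\Sigma^{k-|\alpha_B|}$ (the $V$-type factors cost nothing in $d_\Sigma$). Combined with $|p|^{-(|\pi|+1)} \le c^{-(|\pi|+1)} r^{-(|\pi|+1)m} d_\Sigma^{-(|\pi|+1)k}$, each summand is bounded by
\[
C \, \frac{r^{|\pi|m} d_\Sigma^{|\pi|k - |\alpha|}}{r^{(|\pi|+1)m} d_\Sigma^{(|\pi|+1)k}} \;=\; C \, r^{-m} d_\Sigma^{-k-|\alpha|},
\]
which is exactly the estimate required for $1/p \in S^{-m,-k}(T^*M,\Sigma)$. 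Summing over the finitely many partitions gives the claim.

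The main obstacle is purely bookkeeping: one must verify that the two ``resources'' being tracked — the total order ($r$-power) and the vanishing order on $\Sigma$ ($d_\Sigma$-power) — are exactly balanced in the telescoping that occurs in the Faà di Bruno expansion. The critical structural fact making this work is that $\mathcal{V}(\Sigma)$-derivatives preserve both $m$ and $k$ while general homogeneous degree $0$ vector fields cost one unit of $k$ per derivative; this asymmetry precisely matches the way $|\alpha_B|$ distributes across the blocks of a partition, so the numerator's $d_\Sigma^{-|\alpha|}$ arises intact regardless of how the $W$-factors are grouped. Once this accounting is done, no further analysis is needed — in particular, no parametrix construction, and no appeal to the full pseudodifferential calculus — since the conclusion is a pointwise statement about the symbol $1/p$.
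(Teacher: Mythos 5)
Your argument is correct and is essentially the paper's approach: the paper simply states that ``the proof is analogous to the standard proof that the inverse of an elliptic symbol is a symbol,'' and your Fa\`a di Bruno expansion of $W^{\alpha}V^{\beta}(1/p)$ into terms $\bigl(\prod_B X^B p\bigr)/p^{|\pi|+1}$, estimated by the $S^{m,k}$ bounds on the numerator and the lower bound $|p|\ge cr^m d_{\Sigma}^k$ on the denominator, is precisely that argument carried out, with the exponents balancing as you say. One small point worth making explicit is that each block $X^B$ inherits the induced ordering, hence is itself of the form (degree-$0$ fields)(fields in $\mathcal{V}(\Sigma)$) applied to $p$, so the defining estimates for $p\in S^{m,k}$ apply to it directly (alternatively, mixed orderings reduce to this form by commutators, since $\mathcal{V}(\Sigma)$ is a Lie algebra and brackets of homogeneous degree-$0$ fields are again homogeneous of degree $0$).
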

The proof is analogous to the standard proof that the inverse of an elliptic symbol is a symbol.

We can extend this calculus to operators on vector bundles on manifolds in the same way that the standard pseudodifferential calculus extends, namely by considering operators which under local coordinates (say under foliated coordinates) can be written as a matrix of $\Psi$DOs belonging to this calculus; the principal symbol of such operators will be a matrix whose entries belong to the symbol calculus. For $M = \mathbb{R}^n$, denote this operator calculus by $\Psi^{m,k}(\mathbb{R}^n,\Sigma)\otimes\text{Mat}_{n\times n}(\mathbb{C})$.

An easy application of the calculus constructed above is the following lemma:
\begin{lemma}
\label{paropinv}
Suppose $N\in\Psi^m(\mathbb{R}^n;\text{Mat}_{n\times n}(\mathbb{C}))$ has a left-reduced symbol of the form
\[\sigma_L(N)(x,\zeta) = p(x,\zeta)\,\text{Id} + P_{m-1}(x,\zeta) + P_{m-2}(x,\zeta)\]
where $p = p_m+ip_{m-1}$ with $p_i\in S^i(T^*\mathbb{R}^n)$, $p_m$ nonnegative and vanishing nondegenerately quadratically on $\Sigma$, $p_{m-1}$ is elliptic on $\Sigma$, and $P_i\in S^i(T^*\mathbb{R}^n)\otimes\text{Mat}_{n\times n}(\mathbb{C})$ with $P_{m-1}$ vanishing on $\Sigma$. Then $p\in S^{m,2}(T^*\mathbb{R}^n,\Sigma)$, $\sigma_L(N)\in S^{m,2}(T^*\mathbb{R}^n,\Sigma)$ with the principal symbol satisfying $\sigma_{m,2}(N) = p\,\text{Id}$, and if we let $q = 1/p$, then $q\in S^{-m,-2}(T^*\mathbb{R}^n,\Sigma)$, and for $Q = q(x,D)$ we have $Q\circ N = \text{Id} + R$ where $R\in\Psi^{-1,-1}(\mathbb{R}^n,\Sigma)\otimes\text{Mat}_{n\times n}(\mathbb{C})$.
\end{lemma}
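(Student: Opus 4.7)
The plan is to reduce the lemma to three verifications using the properties of the $S^{m,k}$ calculus established earlier in this section: membership of $\sigma_L(N)$ in $S^{m,2}$ with the asserted principal symbol, the pointwise lower bound on $|p|$ that lets us invoke the Proposition on inverse parabolic symbols, and a direct application of the composition rule.

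First I would verify membership. The hypotheses on $p_m$ (classical, homogeneous of degree $m$, vanishing nondegenerately quadratically on $\Sigma$) give $p_m \in S^{m,2}(T^*\mathbb{R}^n,\Sigma)$ by the bullet point on classical symbols whose top-order part vanishes appropriately on $\Sigma$, while $p_{m-1}\in S^{m-1,0}\subset S^{m,2}$ via the inclusion criterion \eqref{smkinclude}; hence $p\in S^{m,2}$. Similarly $P_{m-1}$, classical of order $m-1$ and vanishing on $\Sigma$, lies in $S^{m-1,1}\subset S^{m,2}$, and $P_{m-2}\in S^{m-2,0}\subset S^{m-1,1}\subset S^{m,2}$. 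Thus $\sigma_L(N)\in S^{m,2}\otimes\mathrm{Mat}_{n\times n}(\mathbb{C})$, and since $P_{m-1}+P_{m-2}\in S^{m-1,1}$, the principal symbol class is exactly $p\,\mathrm{Id}$, as claimed.

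Next I would invert $p$ using the Proposition on inverse symbols, which requires the pointwise lower bound $|p|\gtrsim r^m d_\Sigma^2$. Writing $\mathbf{q}$ for a choice of degree-zero defining functions for $\Sigma$ so that $d_\Sigma^2 = |\mathbf{q}|^2 + 1/r$, and noting $|p|=|p_m+ip_{m-1}|\sim p_m+|p_{m-1}|$ since $p_m\ge 0$, I would argue in two regimes. On a conic neighborhood of $\Sigma$, nondegenerate quadratic vanishing of $p_m$ combined with degree-$m$ homogeneity gives $p_m \gtrsim r^m |\mathbf{q}|^2$, while ellipticity of $p_{m-1}$ on $\Sigma$ plus continuity gives $|p_{m-1}|\gtrsim r^{m-1}$, yielding $p_m+|p_{m-1}|\gtrsim r^m|\mathbf{q}|^2 + r^{m-1}\sim r^m d_\Sigma^2$. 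Away from such a neighborhood, $p_m$ is itself elliptic of order $m$ and $d_\Sigma$ is bounded (locally in the base), so $|p|\ge p_m\gtrsim r^m \gtrsim r^m d_\Sigma^2$. The Proposition then gives $q = 1/p\in S^{-m,-2}$.

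Finally I would compose. Writing $N = (p\,\mathrm{Id})(x,D) + N'$ with $N'\in\Psi^{m-1,1}(\mathbb{R}^n,\Sigma)\otimes\mathrm{Mat}_{n\times n}(\mathbb{C})$, and setting $Q = q(x,D)\in\Psi^{-m,-2}(\mathbb{R}^n,\Sigma)$, the composition rule $\Psi^{m,k}\circ\Psi^{m',k'}\subset\Psi^{m+m',k+k'}$ (with principal symbols multiplying) gives $Q\circ(p\,\mathrm{Id})(x,D)\in\Psi^{0,0}\otimes\mathrm{Mat}_{n\times n}(\mathbb{C})$ with principal symbol $qp\,\mathrm{Id} = \mathrm{Id}$; since the identity operator has left symbol $\mathrm{Id}$, the difference lies in $\Psi^{-1,-1}\otimes\mathrm{Mat}_{n\times n}(\mathbb{C})$. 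Separately, $Q\circ N'\in \Psi^{-m+(m-1),-2+1}=\Psi^{-1,-1}\otimes\mathrm{Mat}_{n\times n}(\mathbb{C})$. Setting $R$ to be the sum of these two contributions yields $Q\circ N = \mathrm{Id} + R$, as required. The only genuinely delicate step is the lower bound $|p|\gtrsim r^m d_\Sigma^2$, for which the balance between quadratic vanishing of $p_m$ and ellipticity of $p_{m-1}$ must be tracked uniformly across the two regimes; everything else is mechanical application of the calculus assembled in this section for foliated charts.
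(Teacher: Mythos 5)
Your proposal is correct and follows essentially the same route as the paper: verify the $S^{m,2}$ membership of each piece via the inclusion criterion \eqref{smkinclude}, invoke the inverse-parabolic Proposition through the lower bound $|p|\ge c\,r^m d_\Sigma^2$, and conclude with the composition rule so that $Q\circ N-\mathrm{Id}\in\Psi^{-1,-1}$. The only difference is that you spell out the two-regime proof of the key lower bound (and split $N$ into $(p\,\mathrm{Id})(x,D)$ plus a $\Psi^{m-1,1}$ remainder before composing), details the paper leaves to the key estimate stated just before its Proposition; this is a harmless elaboration, not a different argument.
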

\begin{proof} 
Note that $p_m\in S^{m,2}(T^*\mathbb{R}^n,\Sigma)$ since it vanishes quadratically on $\Sigma$, and $p_{m-1}\in S^{m-1,0}(T^*\mathbb{R}^n,\Sigma)\subset S^{m,2}(T^*\mathbb{R}^n,\Sigma)$ by \eqref{smkinclude}. By the hypothesis of the lemma, we have that $p$ satisfies the lower bound $|p|\ge cr^md_{\Sigma}^2$, and hence $q\in S^{-m,-2}(T^*\mathbb{R}^n,\Sigma)$. 
It now suffices to show the remaining terms in $\sigma_L(N)$ are in $S^{m-1,1}(T^*\mathbb{R}^n,\Sigma)\otimes\text{Mat}_{n\times n}(\mathbb{C})$. Since $P_{m-1}$ vanishes on $\Sigma$, it follows that $P_{m-1}\in S^{m-1,1}(T^*\mathbb{R}^n,\Sigma)\otimes\text{Mat}_{n\times n}(\mathbb{C})$, while $P_{m-2}\in S^{m-2,0}(T^*\mathbb{R}^n,\Sigma)\otimes\text{Mat}_{n\times n}(\mathbb{C})\subset S^{m-1,1}(T^*\mathbb{R}^n,\Sigma)\otimes\text{Mat}_{n\times n}(\mathbb{C})$ by \eqref{smkinclude}. Thus, we have that $\sigma_L(N)\in S^{m,2}(T^*\mathbb{R}^n,\Sigma)\otimes\text{Mat}_{n\times n}(\mathbb{C})$, with $\sigma_{m,2}(N) = p\,\text{Id}$, and since $q\cdot\sigma_L(N) = \text{Id}$ modulo $S^{-1,-1}$, it follows that 
\[Q\circ N = \text{Id}\mod \Psi^{-1,-1}(\mathbb{R}^n,\Sigma)\otimes\text{Mat}_{n\times n}(\mathbb{C}),\]
as desired.
\end{proof}
\begin{remark}
In fact, a parametrix can be chosen to invert up to an element of $\cap_{j\ge 0}{\Psi^{m-j,k-j}}$, which happens to coincide with the standard residual class $\Psi^{-\infty}$ essentially because $\Psi^{m-j,k-j}\subset\Psi_{1/2}^{m-j+(k-j)_{-}/2} = \Psi_{1/2}^{m-j/2-k/2}$ for $j$ large enough. This is analogous to the situation in the $(1/2,0)$ calculus on $\mathbb{R}^n$, since the principal symbol result for compositions hold. Such a parametrix will in general not be scalar-valued (though its principal symbol will be). However, we will not take advantage of this fact here, since we will also need to apply $Q\in\Psi^{-m,-2}(\mathbb{R}^n,\Sigma)$ to the operators $\tilde{N} = \tilde{N}^{\nu}_{\pm}$, which contribute terms that end up being comparable (in differential order) to the $\Psi^{-1,-1}$ error obtained above.
\end{remark}

With this calculus constructed, we can now rephrase the symbol calculations of Theorem \ref{symbolthm} as follows:
\begin{theorem}
\label{newsymbolthm}
We have $N^{11}_+\in\Psi^{-1,0}(\mathbb{R}^3,\Sigma)\otimes\text{Mat}_{3\times 3}(\mathbb{C})$, while for all other $N^{\nu}_{\pm}$ we have $N^{\nu}_{\pm}\in\Psi^{-1,2}(\mathbb{R}^3,\Sigma)\otimes\text{Mat}_{3\times 3}(\mathbb{C})$. In addition, the $S^{-1,k}$ principal symbols of the $N^{\nu}_{\pm}$ ($k=0$ for $N^{11}_+$ and $k=2$ for all other $N^{\nu}_{\pm}$) can be taken to be scalar multiples of the identity matrix. Furthermore, under the assumptions in the remarks following \ref{subsymb}, we have that $N^{11}_-$ (if $E^2>0$ everywhere), $N^{33}_+$, and $N^{E^2}_{\pm}$ are elliptic as $\Psi^{-1,2}$ operators. Finally, we have $\tilde{N}^{11}_{\pm}\in\Psi^{-1,0}(\mathbb{R}^3,\Sigma)\otimes\text{Mat}_{3\times 1}(\mathbb{C})$, while for $\nu\ne 11$ we have $\tilde{N}^{\nu}_{\pm}\in\Psi^{-1,1}(\mathbb{R}^3,\Sigma)\otimes\text{Mat}_{3\times 1}(\mathbb{C})$.
\end{theorem}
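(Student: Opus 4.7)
The plan is to view this theorem as a direct translation of Theorem \ref{symbolthm} into the symbol calculus $\Psi^{m,k}(\mathbb{R}^3,\Sigma)$ developed in Section \ref{invparsec}. The main input is the criterion listed in the bullet points following the definition of $S^{m,k}(T^*M,\Sigma)$: a classical $\Psi$DO of order $m$ whose principal symbol vanishes to order $k$ on $\Sigma$ lies in $\Psi^{m,k}(\mathbb{R}^3,\Sigma)$ for $k=1$ or $2$. Theorem \ref{symbolthm} already establishes that each $N^{\nu}_{\pm}$ and $\tilde{N}^{\nu}_{\pm}$ is a classical matrix-valued $\Psi$DO of order $-1$, and catalogs precisely which of these have principal symbols vanishing (and to what order) on $\Sigma$. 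Applying the criterion entrywise gives the first memberships: $N^{11}_+\in\Psi^{-1,0}\otimes\mathrm{Mat}_{3\times 3}(\mathbb{C})$ (no vanishing claimed), $N^{\nu}_{\pm}\in\Psi^{-1,2}\otimes\mathrm{Mat}_{3\times 3}(\mathbb{C})$ for the remaining $(\nu,\pm)$ (quadratic vanishing, or better for $N^{33}_-$), and similarly $\tilde{N}^{11}_{\pm}\in\Psi^{-1,0}\otimes\mathrm{Mat}_{3\times 3}(\mathbb{C})$ and $\tilde{N}^{\nu}_{\pm}\in\Psi^{-1,1}\otimes\mathrm{Mat}_{3\times 3}(\mathbb{C})$ for $\nu\neq 11$, using that the principal symbols of $\tilde{N}^{\nu}_{\pm}$ vanish on $\Sigma$ in the latter case. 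Scalar-valuedness of the principal symbols is inherited from the corresponding statement in Theorem \ref{symbolthm}, since the $\Psi^{m,k}$ principal symbol agrees modulo $S^{m-1,k-1}$ with the usual left-reduced symbol in foliated coordinates.

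For the ellipticity statements, I would unpack the decomposition of $\sigma_L(N)$ displayed after Theorem \ref{symbolthm} for $N=N^{11}_-$ (if $E^2>0$), $N^{33}_+$, and $N^{E^2}_{\pm}$, namely
\[\sigma_L(N)(x,\zeta) = (p_{-1}+ip_{-2})\,\mathrm{Id} + P_{-2} + P_{-3},\]
and verify that each piece lies in $S^{-1,2}(T^*\mathbb{R}^3,\Sigma)$. The scalar $p_{-1}\in S^{-1}$ vanishes nondegenerately quadratically on $\Sigma$ so lies in $S^{-1,2}$; the scalar $p_{-2}\in S^{-2}$ lies in $S^{-2,0}\subset S^{-1,2}$ by the inclusion criterion \eqref{smkinclude}; $P_{-2}\in S^{-2}$ vanishes on $\Sigma$ so lies in $S^{-2,1}\subset S^{-1,2}$; and $P_{-3}\in S^{-3}\subset S^{-1,2}$. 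For the lower bound required by Lemma \ref{paropinv} type ellipticity, I would bound separately: on a neighborhood of $\Sigma$, nondegenerate quadratic vanishing of $p_{-1}$ gives $|p_{-1}|\ge c_1 r^{-1}|p|^2$, while the uniform nonvanishing subprincipal bound for $p_{-2}$ on $\Sigma\setminus\{0\}$ (from Theorem \ref{symbolthm}, under the geological assumptions of the remarks following \eqref{subsymb}) extends by continuity to $|p_{-2}|\ge c_2 r^{-2}$ in a neighborhood of $\Sigma$; hence
\[|p_{-1}+ip_{-2}|\ge \tfrac{1}{\sqrt{2}}(|p_{-1}|+|p_{-2}|)\ge c\,r^{-1}(|p|^2+r^{-1}) = c\,r^{-1}d_{\Sigma}^2.\]
Away from $\Sigma$, this bound follows from standard order $-1$ ellipticity of $p_{-1}$ alone. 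Since the matrix corrections $P_{-2},P_{-3}$ lie in a strictly smaller symbol class (in the $(m,k)$ order sense) than $S^{-1,2}$, they do not spoil the lower bound, and the whole matrix symbol $\sigma_L(N)$ is elliptic in $S^{-1,2}\otimes\mathrm{Mat}_{3\times 3}(\mathbb{C})$.

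The main (minor) obstacle I anticipate is bookkeeping the matrix vs.\ scalar distinction: the ellipticity criterion in Section \ref{invparsec} is phrased for scalar-valued symbols, so I need to argue that the scalar diagonal part $(p_{-1}+ip_{-2})\,\mathrm{Id}$ controls invertibility of the matrix symbol because the off-diagonal corrections are of strictly smaller $(m,k)$-type. This is clean because both inclusions $S^{-2,1}\subset S^{-1,2}$ and $S^{-3,0}\subset S^{-1,2}$ are strict in the sense that the relevant improvement in $m-k/2$ is $1/2$ and $1$ respectively, so on the principal symbol level modulo $S^{-3/2,1}$ (the next smaller class) the operator is a scalar multiple of the identity and its invertibility is governed by the scalar bound above. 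Once this is observed, the ellipticity claims follow, completing the translation.
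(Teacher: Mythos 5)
Your proposal is correct and takes essentially the same route as the paper, which presents Theorem \ref{newsymbolthm} as a direct rephrasing of Theorem \ref{symbolthm}: membership follows from the criterion that a classical order $-1$ symbol whose principal part vanishes to order $k$ ($k=1,2$) on $\Sigma$ lies in $S^{-1,k}(T^*\mathbb{R}^3,\Sigma)$, and ellipticity follows from the parabolic lower bound $|p_{-1}+ip_{-2}|\ge c\,r^{-1}d_{\Sigma}^2$ exactly as packaged in Lemma \ref{paropinv}. The only small slip is that in foliated charts the principal symbol is defined modulo $S^{-2,1}$ rather than $S^{-3/2,1}$; since $P_{-2}$ (which vanishes on $\Sigma$) and $P_{-3}$ indeed lie in $S^{-2,1}$, whose elements are small relative to $r^{-1}d_{\Sigma}^2$, your argument goes through unchanged.
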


\section{Recovery estimates}
\label{recovsec}
We are now in a position to analyze possible inversion situations and obtain estimates in these situations, and hence prove Theorems \ref{oneparam}, \ref{twoparam}, and \ref{func}. We recall from Section \ref{intro-sec} that we wish to prove general ``stability estimates'' of the form \eqref{stabest} in order to prove our theorems. (See Corollary \ref{oneparamest} and Propositions \ref{twocoeffprop} and \ref{funcprop} for the precise statements of the desired stability estimates.)

We start with the case of inverting one parameter, assuming the others are known. In \cite{ti}, where there was an artificial boundary, the authors noted that the operator for the $qP$-travel time data for $a_{11}$ was an elliptic (scattering) $\Psi$DO, and hence one can obtain an estimate of the form
\begin{equation}
\|\nabla u\|_{L^2}\le C\left(\|N^{11}_+u\|_{H^1}+\|u\|_{L^2}\right)\label{pastestimate}
\end{equation}
from elliptic regularity. By taking the artificial boundary to be sufficiently close to the actual boundary, one can then absorb the $\|u\|_{L^2}$ term into the left-hand side via an argument using Poincar\'e's inequality.

We aim to obtain similar kinds of estimates when the operators in question are parabolic and not elliptic. It turns out that we obtain optimal estimates when the support of the differences $r_{\nu}$ are supported in sets of small \emph{width}; we define this notion now.
\begin{definition}
\label{width}
A (closed) rectangular domain $R\subset\mathbb{R}^n$ is a set for which there exist $A\in O(n)$, $b\in\mathbb{R}^n$, and $r_1,\dots,r_n\in(0,\infty)$ such that
\[AR - b = \{(x_1,\dots,x_n)\,:\,0\le x_i\le r_i\text{ for all }1\le i\le n\}.\]
We define the \emph{width} $w(R)$ of $R$ as the minimum value of $r_i$ over all $r_i$ in the condition above. For a bounded set $D\subset\mathbb{R}^n$, define its width $w(D)$ as
\[w(D) = \inf\{w(R)\,:\,D\subset R, R\text{ rectangular}\}.\]
\end{definition}
The upshot of this definition is the following quantitative version of Poincar\'e's inequality: if $u\in C_c^{\infty}(\mathbb{R}^n)$, then
\[\|u\|_{L^2(\mathbb{R}^n)}\le\frac{w(\text{supp }u)}{\sqrt{2}}\|\nabla u\|_{L^2(\mathbb{R}^n)}.\]
It suffices to prove the estimate with $w(\text{supp }u)$ replaced by $w(R)$ for $u\in C_c^{\infty}(R)$ where $R$ is of the form $R = \{x\in\mathbb{R}^n\,:\,0\le x_i\le r_i\}$. We can estimate $\|u\|_{L^2(R)}$ by computing
\begin{align*}&\int_R{|u(x)|^2\,dx} \\
&= \int_{0}^{r_1}\dots\int_{0}^{r_{n-1}}\int_{0}^{r_n}{\left|\int_{0}^{x_n}{\partial_{x_n}u(x_1,\dots,x_{n-1},y)\,dy}\right|^2\,dx_n\,dx_{n-1}\dots\,dx_1} \\
&\le\int_{0}^{r_1}\dots\int_{0}^{r_{n-1}}\int_{0}^{r_n}{\left(\int_{0}^{r_n}{|\partial_{x_n}u(x_1,\dots,x_{n-1},y)|^2\,dy}\right)x_n\,dx_n\,dx_{n-1}\dots\,dx_1}\\
&\le\int_{0}^{r_1}\dots\int_{0}^{r_{n-1}}{\left(\int_{0}^{r_n}{|\nabla u(x_1,\dots,x_{n-1},y)|^2\,dy}\right)\frac{r_n^2}{2}\,dx_{n-1}\dots\,dx_1}\\
&=\frac{r_n^2}{2}\|\nabla u\|_{L^2(R)}^2
\end{align*}
where the second line follows from Cauchy-Schwarz, and hence $\|u\|_{L^2(R)}\le\frac{r_n}{\sqrt{2}}\|\nabla u\|_{L^2(R)}$. Changing the order of coordinates so that we can take $r_n = \min\{r_i\}$, it follows that $\|u||_{L^2(R)}\le\frac{w(R)}{\sqrt{2}}\|\nabla u\|_{L^2(R)}$, as desired.

With this quantitative version of Poincar\'e's inequality, we first note without further proof that, in this setting, the elliptic regularity result for $a_{11}$ also holds:
\begin{proposition}
Suppose that $a_{33}$ and $E^2$ are known, and let $f = N^{11}_+[\nabla r_{11}] + \tilde{N}^{11}_+[r_{11}]$. Then
\begin{equation}
\|\nabla r_{11}\|_{L^2(\mathbb{R}^3)}\le C\left(\|f\|_{H^1(\mathbb{R}^3)} + \|r_{11}\|_{L^2(\mathbb{R}^3)}\right).\label{a11+global}
\end{equation}
In particular, for $r_{11}$ with sufficiently small width of support we have
\begin{equation}
\|\nabla r_{11}\|_{L^2(\mathbb{R}^3)}\le C\|f\|_{H^1(\mathbb{R}^3)}.\label{a11+globalinj}
\end{equation}
In particular, if $r_{11}$ is known to have sufficiently small width of support, and $a_{11}$ and $\tilde{a}_{11}$ give the same travel time data, then $r_{11}\equiv 0$, i.e. we have uniqueness within functions that differ only on sets of sufficiently small width.
\end{proposition}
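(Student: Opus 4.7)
The plan is to exploit the fact that, by Theorem \ref{newsymbolthm}, the operator $N^{11}_+$ is an elliptic element of $\Psi^{-1,0}(\mathbb{R}^3,\Sigma)\otimes\mathrm{Mat}_{3\times 3}(\mathbb{C})$ whose principal symbol as an order $-1$ classical $\Psi$DO is a nonvanishing scalar multiple of the identity on all of $T^*\mathbb{R}^3\setminus o$. This places us in the classical elliptic setting, so we do not need the refined $\Psi^{m,k}$ machinery of Section \ref{invparsec}; standard parametrix theory suffices. Concretely, I would construct $Q\in\Psi^{1}(\mathbb{R}^3)\otimes\mathrm{Mat}_{3\times 3}(\mathbb{C})$ with $Q\circ N^{11}_+ = \mathrm{Id} + R$ where $R\in\Psi^{-\infty}$, by inverting the matrix principal symbol (a nonzero scalar times Id) and iterating in the usual way.

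Applying $Q$ to the identity $f = N^{11}_+[\nabla r_{11}] + \tilde{N}^{11}_+[r_{11}]$ gives
\[\nabla r_{11} = Qf - R[\nabla r_{11}] - Q\tilde{N}^{11}_+[r_{11}].\]
I then take $L^2$ norms of both sides. Since $Q\in\Psi^1$ is bounded $H^1\to L^2$, we get $\|Qf\|_{L^2}\le C\|f\|_{H^1}$. Since $R$ is smoothing, in particular $H^{-1}\to L^2$ bounded, we get $\|R[\nabla r_{11}]\|_{L^2}\le C\|\nabla r_{11}\|_{H^{-1}}\le C\|r_{11}\|_{L^2}$. Finally, by Theorem \ref{symbolthm} we have $\tilde{N}^{11}_+\in\Psi^{-1}$, so $Q\tilde{N}^{11}_+\in\Psi^{0}$ is bounded on $L^2$, yielding $\|Q\tilde{N}^{11}_+[r_{11}]\|_{L^2}\le C\|r_{11}\|_{L^2}$. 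Summing these three bounds produces \eqref{a11+global}.

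To pass to \eqref{a11+globalinj}, I invoke the quantitative Poincar\'e inequality established just before the proposition statement: $\|r_{11}\|_{L^2}\le\frac{w}{\sqrt{2}}\|\nabla r_{11}\|_{L^2}$ with $w = w(\mathrm{supp}\,r_{11})$. Choosing $w$ small enough that $Cw/\sqrt{2}<1/2$ lets me absorb the $C\|r_{11}\|_{L^2}$ term on the right of \eqref{a11+global} into the left-hand side. For the final uniqueness assertion, if $a_{11}$ and $\tilde{a}_{11}$ give identical travel time data then the pseudolinearization formula \eqref{npseudo} forces $f\equiv 0$, so \eqref{a11+globalinj} yields $\nabla r_{11}\equiv 0$, and compact support of $r_{11}$ (enforced by the standing assumptions in the introduction) then gives $r_{11}\equiv 0$.

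There is no serious obstacle here: the hard analytic work is already done in Theorem \ref{newsymbolthm} (ellipticity of $N^{11}_+$ at the order $-1$ level), and once ellipticity is in hand the proof is essentially classical elliptic regularity combined with the quantitative Poincar\'e bound. The only thing to keep track of is the matrix-valued nature of the operators, which is routine since the principal symbol is a scalar multiple of the identity and so the parametrix can be constructed componentwise.
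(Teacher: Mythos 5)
Your argument is correct and is exactly the route the paper intends: the paper states this proposition ``without further proof,'' appealing to the ellipticity of $N^{11}_+$ (as in the estimate \eqref{pastestimate} from \cite{ti}) together with the quantitative Poincar\'e inequality proved just beforehand, which is precisely your parametrix-plus-absorption argument. The only remark worth making is that a one-step parametrix with remainder in $\Psi^{-2}$ (or even $\Psi^{-1}$) already suffices for the $\|r_{11}\|_{L^2}$ error term, so the full $\Psi^{-\infty}$ remainder is not needed.
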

\begin{remark} If $\text{supp }r_{11}$ can be written as a disjoint union of closed connected components, then the width can be replaced with the maximum width of each component. In general, if the support is contained in a ``thin'' set of sufficiently small curvature, so that it can be covered by a union of rectangles of small width with a ``low number of overlaps'', then a similar Poincar\'e inequality argument should be possible by taking a partition of unity subordinate to the cover of thin rectangles and applying the Poincar\'e inequality argument to each piece; the ``low number of overlaps'' then helps patch the estimates back together.
\end{remark}
\begin{remark} The stability estimate in \eqref{a11+global} is the crucial result: indeed, if the injectivity of the operator can be otherwise established, then the stability estimate upgrades to an estimate of the form \eqref{a11+globalinj}.
\end{remark}
We now establish the analogous estimates of \eqref{a11+global} and \eqref{a11+globalinj} for the other parameters. The estimates will follow from the following general argument:
\begin{proposition} Suppose $N\in\Psi^m(\mathbb{R}^n;\text{Mat}_{n\times n}(\mathbb{C}))$ satisfies the assumptions of Lemma \ref{paropinv} and $\tilde{N}\in\Psi^m(\mathbb{R}^n;\text{Mat}_{n\times 1}(\mathbb{C}))$ satisfies $\sigma(\tilde{N})|_{\Sigma}\equiv 0$, and let $f = N[\nabla u]+\tilde{N}[u]$. Then we have the estimate
\begin{equation}
\|\nabla u\|_{H^s(\mathbb{R}^n)}\le C\left(\|f\|_{H^{s+1-m}(\mathbb{R}^n)} + \|u\|_{H^{s+1/2}(\mathbb{R}^n)}\right).
\label{parestimate}
\end{equation}
Furthermore, a $H^{1/2}$-version of the Poincar\'e inequality holds:
\[\|u\|_{H^{1/2}(\mathbb{R}^n)}\le \left(\frac{w^2}{2}+\frac{w}{\sqrt{2}}\right)^{1/2}\|\nabla u\|_{L^2(\mathbb{R}^n)},\quad w = w(\text{supp }u).\]
Thus, if $u$ has sufficiently small width of support, we can conclude
\begin{equation}
\|\nabla u\|_{L^2(\mathbb{R}^n)}\le C\|f\|_{H^{1-m}(\mathbb{R}^n)}.
\label{parestimateinj}
\end{equation}
\end{proposition}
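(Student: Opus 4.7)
The plan is to apply the parametrix from Lemma \ref{paropinv} and then carefully track mapping properties in the Boutet de Monvel calculus, exploiting the key fact that $\tilde{N}$ inherits a gain of one parabolic order from the vanishing of its principal symbol on $\Sigma$.

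First, I would invoke Lemma \ref{paropinv} to obtain $Q\in\Psi^{-m,-2}(\mathbb{R}^n,\Sigma)\otimes\text{Mat}_{n\times n}(\mathbb{C})$ with $Q\circ N = \text{Id} + R$, where $R\in\Psi^{-1,-1}(\mathbb{R}^n,\Sigma)\otimes\text{Mat}_{n\times n}(\mathbb{C})$. Applying $Q$ to the defining equation $f = N[\nabla u] + \tilde{N}[u]$ and rearranging gives
\[\nabla u = Qf - R[\nabla u] - Q\tilde{N}[u].\]
Now I take the $H^s$ norm of both sides and bound each term separately using the boundedness statement $\Psi^{m',k'}\subset\Psi^{m'+k'_-/2}_{1/2}$ from the excerpt. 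For the first term, $Q$ has $k'_-=2$, so $Q\colon H^{s+1-m}\to H^s$ and $\|Qf\|_{H^s}\le C\|f\|_{H^{s+1-m}}$.

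For the second term, since $R\in\Psi^{-1,-1}\subset\Psi^{-1/2}_{1/2}$, I get $\|R[\nabla u]\|_{H^s}\le C\|\nabla u\|_{H^{s-1/2}}\le C\|u\|_{H^{s+1/2}}$. For the third term, the hypothesis that $\sigma(\tilde{N})|_\Sigma\equiv 0$ together with the classical inclusion recalled in the excerpt (a classical symbol of order $m$ whose principal part vanishes to order $1$ on $\Sigma$ lies in $S^{m,1}$) gives $\tilde{N}\in\Psi^{m,1}$. By the composition rule for the parabolic calculus, $Q\circ\tilde{N}\in\Psi^{0,-1}\subset\Psi^{1/2}_{1/2}$, hence $Q\tilde{N}\colon H^{s+1/2}\to H^s$ and $\|Q\tilde{N}[u]\|_{H^s}\le C\|u\|_{H^{s+1/2}}$. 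Summing the three bounds yields \eqref{parestimate}. The main (mild) obstacle is ensuring the parabolic-order bookkeeping really gives a gain on the $\tilde{N}$ term; this is exactly where the vanishing hypothesis on $\sigma(\tilde{N})|_\Sigma$ is used.

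For the $H^{1/2}$ Poincaré inequality, I would pass to the Fourier side and use the elementary bound $(1+|\xi|^2)^{1/2}\le 1+|\xi|$ together with Plancherel to write
\[\|u\|_{H^{1/2}}^2\le \|u\|_{L^2}^2 + \bigl\||\xi|^{1/2}\hat{u}\bigr\|_{L^2}^2.\]
An application of Cauchy--Schwarz gives $\||\xi|^{1/2}\hat{u}\|_{L^2}^2\le \||\xi|\hat{u}\|_{L^2}\|\hat{u}\|_{L^2} = \|\nabla u\|_{L^2}\|u\|_{L^2}$, and combining this with the standard Poincaré inequality $\|u\|_{L^2}\le\frac{w}{\sqrt{2}}\|\nabla u\|_{L^2}$ proved just above produces the stated $H^{1/2}$ bound.

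Finally, to pass from \eqref{parestimate} at $s=0$ to the injectivity bound \eqref{parestimateinj}, I substitute the $H^{1/2}$ Poincaré inequality into the right-hand side to get
\[\|\nabla u\|_{L^2}\le C\|f\|_{H^{1-m}} + C\left(\tfrac{w^2}{2}+\tfrac{w}{\sqrt{2}}\right)^{1/2}\|\nabla u\|_{L^2},\]
and then take $w = w(\text{supp }u)$ small enough that the $\|\nabla u\|_{L^2}$ term on the right can be absorbed into the left-hand side, yielding \eqref{parestimateinj}.
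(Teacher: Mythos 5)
Your proposal is correct and follows essentially the same route as the paper: apply the parametrix $Q$ from Lemma \ref{paropinv}, use the inclusions $\Psi^{-m,-2}\subset\Psi^{-m+1}_{1/2}$, $\Psi^{-1,-1}\subset\Psi^{-1/2}_{1/2}$, and $Q\circ\tilde N\in\Psi^{0,-1}\subset\Psi^{1/2}_{1/2}$ (via $\sigma(\tilde N)|_\Sigma=0\Rightarrow\tilde N\in\Psi^{m,1}$) for the three mapping bounds, then absorb via the $H^{1/2}$ Poincar\'e inequality. Your Fourier-side derivation of the $H^{1/2}$ Poincar\'e bound is a trivial variant of the paper's Cauchy--Schwarz interpolation $\|u\|_{H^{1/2}}^2\le\|u\|_{L^2}\|u\|_{H^1}$ and yields the same constant.
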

\begin{proof}
Let $Q\in\Psi^{-m,-2}(\mathbb{R}^n,\Sigma)$ be the operator obtained from the proof of Lemma \ref{paropinv} which satisfies $Q\circ N = \text{Id} + R$, with $R\in\Psi^{-1,-1}(\mathbb{R}^n,\Sigma)\otimes\text{Mat}_{n\times n}(\mathbb{C})$. Applying $Q$ to the equation $f = N[\nabla u]+\tilde{N}[u]$ yields
\[\nabla u = Qf - R[\nabla u] + (Q\circ\tilde{N})u.\]
Note that $Q\in\Psi^{-m,-2}(\mathbb{R}^n,\Sigma)\subset\Psi_{1/2}^{-m+1}(\mathbb{R}^n)$ implies that it maps boundedly from $H^{s+1-m}(\mathbb{R}^n)$ to $H^s(\mathbb{R}^n)$, and $R\in\Psi^{-1,-1}(\mathbb{R}^n,\Sigma)\subset\Psi_{1/2}^{-1/2}(\mathbb{R}^n)$ implies it maps boundedly from $H^{s-1/2}(\mathbb{R}^n)$ to $H^s(\mathbb{R}^n)$. Since $\sigma(\tilde{N})\in S^m(T^*\mathbb{R}^n)\otimes\text{Mat}_{n\times 1}(\mathbb{C})$ vanishes on $\Sigma$, it follows that $\sigma(\tilde{N})\in S^{m,1}(T^*\mathbb{R}^n,\Sigma)\otimes\text{Mat}_{n\times 1}(\mathbb{C})$, and hence $Q\circ\tilde{N}\in\Psi^{0,-1}(\mathbb{R}^n,\Sigma)\otimes\text{Mat}_{n\times 1}(\mathbb{C})$; in particular $Q\circ\tilde{N}$ maps boundedly from $H^{s+1/2}(\mathbb{R}^n)$ to $H^s(\mathbb{R}^n)$. It follows that
\begin{align*}
\|\nabla u\|_{H^s(\mathbb{R}^n)} &\le \|Qf\|_{H^s(\mathbb{R}^n)} + \|R[\nabla u]\|_{H^s(\mathbb{R}^n)} + \|(Q\circ\tilde{N})u\|_{H^s(\mathbb{R}^n)} \\
&\le C\left(\|f\|_{H^{s+1-m}(\mathbb{R}^n)}+\|\nabla u\|_{H^{s-1/2}(\mathbb{R}^n)} + \|u\|_{H^{s+1/2}(\mathbb{R}^n)}\right)\\
&\le C\left(\|f\|_{H^{s+1-m}(\mathbb{R}^n)} + \|u\|_{H^{s+1/2}(\mathbb{R}^n)}\right),
\end{align*}
thus giving \eqref{parestimate}.

The $H^{1/2}$ Poincar\'e inequality can be obtained from the standard Poincar\'e inequality $\|u\|_{L^2(\mathbb{R}^n)}\le \frac{w}{\sqrt{2}}\|\nabla u\|_{L^2(\mathbb{R}^n)}$ by Cauchy-Schwarz:
\begin{align*}
\|u\|_{H^{1/2}(\mathbb{R}^n)}^2&\le\|u\|_{L^2(\mathbb{R}^n)}\|u\|_{H^1(\mathbb{R}^n)}\\
&=\|u\|_{L^2(\mathbb{R}^n)}(\|u\|_{L^2(\mathbb{R}^n)}+\|\nabla u\|_{L^2(\mathbb{R}^n)})\\
&\le (w^2/2+w/\sqrt{2})\|\nabla u\|_{L^2(\mathbb{R}^n)}^2.
\end{align*}
Thus, if $w$ is sufficiently small, we can move the $\|u\|_{H^{1/2}}$ term to the LHS of \eqref{parestimate} (with $s=0$) to obtain \eqref{parestimateinj}.
\end{proof}
Noting that most of our operators of interest $N^{\nu}_{\pm}$ satisfy the assumptions of Lemma \ref{paropinv}, we immediately obtain the following corollary:
\begin{corollary}
\label{oneparamest}
For $N^{11}_-$ (if $E^2>0$), $N^{33}_+$, and $N^{E^2}_{\pm}$, we have the stability estimates 
\[\|\nabla r_{\nu}\|_{L^2(\mathbb{R}^3)}\le C\left(\|N^{\nu}_{\pm}(\nabla r_{\nu})+\tilde{N}^{\nu}_{\pm}(r_{\nu})\|_{H^2(\mathbb{R}^3)} + \|r_{\nu}\|_{H^{1/2}(\mathbb{R}^3)}\right).\]
\end{corollary}
From this corollary, we can prove Theorem \ref{oneparam}, as follows:
\begin{proof}[Proof of Theorem \ref{oneparam}]
Assuming that two of the parameters are known and that we are aiming to recover the final parameter $\nu$, the pseudolinearization equation \eqref{npseudo} reduces to the equation $\vec{0}_3 = N^{\nu}_{\pm}[\nabla r_{\nu}] + \tilde{N}^{\nu}_{\pm}[r_{\nu}]$. It follows that if the qSV travel times agree and we are trying to recover $a_{11}$ or $E^2$, or if the qP travel times agree and we are trying to recover $a_{33}$ or $E^2$, that the estimate in Corollary \ref{oneparamest} reduces to the estimate
\[\|\nabla r_{\nu}\|_{L^2(\mathbb{R}^3)}\le C\|r_{\nu}\|_{H^{1/2}(\mathbb{R}^3)},\]
so that if furthermore the width of support of $r_{\nu}$ is sufficiently small, then the Poincar\'e inequality implies $\nabla r_{\nu}\equiv 0$, and hence $r_{\nu}\equiv 0$ since it is compactly supported.
\end{proof}

\begin{remark}
For the width of support $w$ to be ``sufficiently small'', we need
\[\left(\frac{w^2}{2}+\frac{w}{\sqrt{2}}\right)^{1/2}\le (1-\epsilon)C^{-1}\]
where $C$ is a constant depending on the size (i.e. operator norm) of the 1st order parametrix $Q$ for $N^{\nu}_{\pm}$, the corresponding error operator $R$, and the operator $\tilde{N}^{\nu}_{\pm}$. The operator norm of a parametrix and the associated error is generally difficult to estimate quantitatively using conventional microlocal methods, though one potential workaround would be to attempt to make semiclassical versions of the arguments above, since one can more readily relate the operator norm of a semiclassical operator with the size of its semiclassical symbol. See \cite{semiclassical} for an example of a semiclassical treatment of analogous arguments made in \cite{convex}.
\end{remark}

We now analyze the problem of recovering two of three parameters, with the third either known or as a known function of the other two, from using both the $qP$ and $qSV$ travel time data. Recall that we have the equations
\begin{equation}
\label{joint}
\begin{aligned}
\vec{0}_3 &= \sum_{\nu}{N^{\nu}_+[\nabla r_{\nu}] + \tilde{N}^{\nu}_+[r_{\nu}]}, \\
\vec{0}_3 &= \sum_{\nu}{N^{\nu}_-[\nabla r_{\nu}] + \tilde{N}^{\nu}_-[r_{\nu}]}.
\end{aligned}
\end{equation}
For $N^{11}_+$, we let $\sigma\left(N^{11}_+\right)$ denote the principal symbol of $N^{11}_+$, while for all other $N^{\nu}_{\pm}$ we let $\sigma(N^{\nu}_{\pm})$ denote the sum of their principal and subprincipal symbols (so that $\sigma(N^{\nu}_{\pm})^{-1}\in S^{1,-2}(T^*\mathbb{R}^3,\Sigma)$ for $N^{33}_+$, $N^{E^2}_{\pm}$, and $N^{11}_-$ if $E^2>0$). To analyze the invertibility of the (matrix-valued) symbols of the operators in \eqref{joint}, we use the subprincipal behavior of the operators near $\Sigma$ and the symbol calculus developed in Section \ref{invparsec} to analyze the symbols near $\Sigma$, while away from $\Sigma$ we use the quantitative estimates developed at the end of Section \ref{excomp}; in particular we will take our $\chi$ to be supported in a sufficiently small neighborhood $\{|\xi_T|<\epsilon|\xi|\}$ of the equatorial sphere and identically one in a smaller neighborhood. We will use the following idea: if $a, d\ne 0$, then the inverse of the matrix $\begin{pmatrix} a & b \\ c & d \end{pmatrix}$ can be written as
\[\begin{pmatrix} a & b \\ c & d \end{pmatrix}^{-1} = \left(1 - \frac{bc}{ad}\right)^{-1}\begin{pmatrix} \frac{1}{a} & -\frac{b}{ad} \\ -\frac{c}{ad} & \frac{1}{d} \end{pmatrix},\]
provided that $1 - \frac{bc}{ad}$ is invertible.

In the rest of this section, we will write $6\times 6$ matrices as $2\times 2$ block matrices with $3\times 3$ blocks. A block containing a scalar expression should be identified with that scalar multiple of the $3\times 3$ identity.

First, let's suppose $a_{11}$ is known. Then we write the above equations as
\[\vec{0}_6 = \begin{pmatrix} N^{33}_+ & N^{E^2}_+ \\ N^{33}_- & N^{E^2}_- \end{pmatrix}\begin{pmatrix} \nabla r_{33} \\ \nabla r_{E^2} \end{pmatrix} + \begin{pmatrix} \tilde{N}^{33}_+ & \tilde{N}^{E^2}_+ \\ \tilde{N}^{33}_- & \tilde{N}^{E^2}_-\end{pmatrix}\begin{pmatrix} r_{33} \\ r_{E^2}\end{pmatrix}.\]
The inverse of the symbol of the first matrix can be written as
\[q = \left(1 - \frac{\sigma\left(N^{33}_-\right)\sigma\left(N^{E^2}_+\right)}{\sigma\left(N^{33}_+\right)\sigma\left(N^{E^2}_-\right)}\right)^{-1}\begin{pmatrix} \frac{1}{\sigma\left(N^{33}_+\right)} & - \frac{\sigma\left(N^{E^2}_+\right)}{\sigma\left(N^{33}_+\right)\sigma\left(N^{E^2}_-\right)} \\ - \frac{\sigma\left(N^{33}_-\right)}{\sigma\left(N^{33}_+\right)\sigma\left(N^{E^2}_-\right)} & \frac{1}{\sigma\left(N^{E^2}_-\right)}\end{pmatrix},\]
assuming the invertibility of $1 - \frac{\sigma\left(N^{33}_-\right)\sigma\left(N^{E^2}_+\right)}{\sigma\left(N^{33}_+\right)\sigma\left(N^{E^2}_-\right)}$. Since the principal parts of $\sigma\left(N^{33}_-\right)$ and $\sigma\left(N^{E^2}_+\right)$ both vanish quadratically on $\Sigma$ and hence are in $S^{-1,2}(T^*\mathbb{R}^3,\Sigma)$, and $\sigma\left(N^{33}_+\right)^{-1}$ and $\sigma\left(N^{E^2}_-\right)^{-1}$ are both of inverse parabolic type, i.e. belong to $S^{1,-2}(T^*\mathbb{R}^3,\Sigma)$, it follows from the symbol calculus that $\frac{\sigma\left(N^{33}_-\right)\sigma\left(N^{E^2}_+\right)}{\sigma\left(N^{33}_+\right)\sigma\left(N^{E^2}_-\right)}$ belongs to $S^{0,0}(T^*\mathbb{R}^3,\Sigma)$. 
Furthermore, since $\sigma\left(N^{33}_-\right)$ actually vanishes quartically on $\Sigma$, it follows that $\frac{\sigma\left(N^{33}_-\right)\sigma\left(N^{E^2}_+\right)}{\sigma\left(N^{33}_+\right)\sigma\left(N^{E^2}_-\right)}$ is small (say $\left|\frac{\sigma\left(N^{33}_-\right)\sigma\left(N^{E^2}_+\right)}{\sigma\left(N^{33}_+\right)\sigma\left(N^{E^2}_-\right)}\right|<\frac{1}{2}$) in a conic neighborhood of $\Sigma$. Away from $\Sigma$, we can estimate the fraction by replacing the terms in the fraction with their respective principal symbols, since in the denominator the principal symbols are elliptic away from $\Sigma$. From \eqref{cutoffest} we have that if $\chi\equiv 1$ in a neighborhood of the equatorial sphere $\{\xi_T = 0\}$ and is supported in $\{|\xi_T|<\epsilon|\xi|\}$, then we have
\begin{align*}\sigma_{-1}(N^{33}_-)\sigma_{-1}(N^{E^2}_+) &= \frac{O(\epsilon^2)}{(a_{11}-a_{55})_l}a_{-,T}a_{+,T}\\
\text{and}\quad\sigma_{-1}(N^{33}_+)\sigma_{-1}(N^{E^2}_-) &= \left(\frac{1}{(a_{11}-a_{55})_l}+O(\epsilon^2)\right)a_{+,T}a_{-,T}
\end{align*}
(using the notation of \eqref{aint}), and hence
\[\frac{\sigma_{-1}(N^{33}_-)\sigma_{-1}(N^{E^2}_+)}{\sigma_{-1}(N^{33}_+)\sigma_{-1}(N^{E^2}_-)} = O(\epsilon^2).\]
It follows that if $\epsilon$ is sufficiently small, then $1-\frac{\sigma\left(N^{33}_-\right)\sigma\left(N^{E^2}_+\right)}{\sigma\left(N^{33}_+\right)\sigma\left(N^{E^2}_-\right)}$ is an everywhere elliptic symbol belonging to $S^{0,0}(T^*\mathbb{R}^3,\Sigma)$. Then every component of the inverse matrix $q$ is of an element of $S^{1,-2}(T^*\mathbb{R}^3,\Sigma)$. It then follows (essentially by applying Lemma \ref{paropinv} to each component) that the quantization $q(x,D)$ belongs to $\Psi^{1,-2}(\mathbb{R}^3,\Sigma)\otimes\text{Mat}_{6\times 6}(\mathbb{C})$, with
\begin{align*}
R_{-1/2} &= q(x,D)\begin{pmatrix} N^{33}_+ & N^{E^2}_+ \\ N^{33}_- & N^{E^2}_- \end{pmatrix}-\text{Id}\in\Psi^{-1,-1}(\mathbb{R}^3,\Sigma)\otimes\text{Mat}_{6\times 6}(\mathbb{C})\\
\text{and}\quad R_{1/2} &= q(x,D)\begin{pmatrix} \tilde{N}^{33}_+ & \tilde{N}^{E^2}_+ \\ \tilde{N}^{33}_- & \tilde{N}^{E^2}_-\end{pmatrix}\in\Psi^{0,-1}(\mathbb{R}^3,\Sigma)\otimes\text{Mat}_{6\times 2}(\mathbb{C}).
\end{align*}
Thus, if
\[f = \begin{pmatrix} N^{33}_+ & N^{E^2}_+ \\ N^{33}_- & N^{E^2}_- \end{pmatrix}\begin{pmatrix} \nabla r_{33} \\ \nabla r_{E^2} \end{pmatrix} + \begin{pmatrix} \tilde{N}^{33}_+ & \tilde{N}^{E^2}_+ \\ \tilde{N}^{33}_- & \tilde{N}^{E^2}_-\end{pmatrix}\begin{pmatrix} r_{33} \\ r_{E^2}\end{pmatrix},\]
then applying $q(x,D)$ to both sides yields
\[q(x,D)f = \begin{pmatrix} \nabla r_{33} \\ \nabla r_{E^2}\end{pmatrix} + R_{-1/2}\begin{pmatrix} \nabla r_{33} \\ \nabla r_{E^2}\end{pmatrix} + R_{1/2}\begin{pmatrix} r_{33} \\ r_{E^2}\end{pmatrix},\]
and hence we obtain the stability estimate
\begin{equation}
\|(\nabla r_{33},\nabla r_{E^2})\|_{L^2}\le C\left(\|f\|_{H^2} + \|(r_{33},r_{E^2})\|_{H^{1/2}}\right).
\label{33e2}
\end{equation}
Next, let's suppose $a_{33}$ is known instead. Then we have
\[\vec{0}_6 = \begin{pmatrix} N^{11}_+ & N^{E^2}_+ \\ N^{11}_- & N^{E^2}_- \end{pmatrix}\begin{pmatrix} \nabla r_{11} \\ \nabla r_{E^2} \end{pmatrix} + \begin{pmatrix} \tilde{N}^{11}_+ & \tilde{N}^{E^2}_+ \\ \tilde{N}^{11}_- & \tilde{N}^{E^2}_-\end{pmatrix}\begin{pmatrix} r_{11} \\ r_{E^2}\end{pmatrix}.\]
The inverse of the symbol of the first matrix can be written as
\[q = \left(1 - \frac{\sigma\left(N^{11}_-\right)\sigma\left(N^{E^2}_+\right)}{\sigma\left(N^{11}_+\right)\sigma\left(N^{E^2}_-\right)}\right)^{-1}\begin{pmatrix} \frac{1}{\sigma\left(N^{11}_+\right)} & - \frac{\sigma\left(N^{E^2}_+\right)}{\sigma\left(N^{11}_+\right)\sigma\left(N^{E^2}_-\right)} \\ - \frac{\sigma\left(N^{11}_-\right)}{\sigma\left(N^{11}_+\right)\sigma\left(N^{E^2}_-\right)} & \frac{1}{\sigma\left(N^{E^2}_-\right)}\end{pmatrix}.\]
In this case, since $\sigma\left(N^{E^2}_+\right)$ and $\sigma\left(N^{11}_-\right)$ have principal parts vanishing on $\Sigma$, $\sigma\left(N^{E^2}_-\right)^{-1}$ is of inverse parabolic type, and $\sigma\left(N^{11}_+\right)$ is \emph{elliptic}, it follows (similarly to the above case) that $\frac{\sigma\left(N^{11}_-\right)\sigma\left(N^{E^2}_+\right)}{\sigma\left(N^{11}_+\right)\sigma\left(N^{E^2}_-\right)}$ belongs to $S^{0,0}(T^*\mathbb{R}^3,\Sigma)$ and is guaranteed to be small in a conical neighborhood of $\Sigma$. We can analyze the behavior away from $\Sigma$ by analyzing the principal symbols as before: in this case we have
\begin{align*}
\sigma_{-1}(N^{11}_-)\sigma_{-1}(N^{E^2}_+) &= \frac{O(\epsilon^2)}{(a_{11}-a_{55})_l}a_{-,I}a_{+,T}\\
\text{and}\quad \sigma_{-1}(N^{11}_+)\sigma_{-1}(N^{E^2}_-) &= \left(\frac{1}{(a_{11}-a_{55})_l}+O(\epsilon^2)\right)a_{+,I}a_{-,T},
\end{align*}
and hence
\[\frac{\sigma_{-1}(N^{11}_-)\sigma_{-1}(N^{E^2}_+)}{\sigma_{-1}(N^{11}_+)\sigma_{-1}(N^{E^2}_-)} = O(\epsilon^2)\frac{a_{-,I}a_{+,T}}{a_{+,I}a_{-,T}}.\]
The latter fraction is close to $1$ when $\epsilon$ is small. 
Thus like above we have that $1 - \frac{\sigma\left(N^{11}_-\right)\sigma\left(N^{E^2}_+\right)}{\sigma\left(N^{11}_+\right)\sigma\left(N^{E^2}_-\right)}$ is an everywhere elliptic symbol belonging to $S^{0,0}(T^*\mathbb{R}^3,\Sigma)$. In addition, the terms in the matrix can be analyzed as follows:
\begin{itemize}
\item $\frac{1}{\sigma\left(N^{11}_+\right)}$ is a order $1$ symbol of type $(1,0)$, since $N^{11}_+$ is elliptic.
\item Writing the top right and bottom left terms as $\left(-\frac{\sigma\left(N^{E^2}_+\right)}{\sigma\left(N^{E^2}_-\right)}\right)\frac{1}{\sigma\left(N^{11}_+\right)}$ and $\left(-\frac{\sigma\left(N^{11}_-\right)}{\sigma\left(N^{E^2}_-\right)}\right)\frac{1}{\sigma\left(N^{11}_+\right)}$, we see that these terms belong to $S^{1,0}(T^*\mathbb{R}^3,\Sigma)$.
\item Finally, $\frac{1}{\sigma\left(N^{E^2}_-\right)}\in S^{1,-2}(T^*\mathbb{R}^3,\Sigma)$, similar to the terms in the previous case.
\end{itemize}
Due to the above observations, we can similarly conclude in the previous case that
\[q(x,D)\begin{pmatrix} N^{11}_+ & N^{E^2}_+ \\ N^{11}_- & N^{E^2}_- \end{pmatrix}-\text{Id}\in\Psi^{-1,-1}(\mathbb{R}^3,\Sigma)\otimes\text{Mat}_{6\times 6}(\mathbb{C}).\]
Furthermore, a careful analysis of the entries of the product
\[\quad q(D)\begin{pmatrix} \tilde{N}^{11}_+ & \tilde{N}^{E^2}_+ \\ \tilde{N}^{11}_- & \tilde{N}^{E^2}_-\end{pmatrix}\]
shows that it is in $\Psi^{0,-1}(\mathbb{R}^3,\Sigma)$, similarly to the previous case (a slightly different argument is needed since $\sigma_{-1}(\tilde{N}^{11}_+)$ does not necessarily vanish on $\Sigma$). For example, the symbol of the bottom-left entry is $\left(1 - \frac{\sigma\left(N^{11}_-\right)\sigma\left(N^{E^2}_+\right)}{\sigma\left(N^{11}_+\right)\sigma\left(N^{E^2}_-\right)}\right)^{-1}$ times
\[- \frac{\sigma\left(N^{11}_-\right)}{\sigma\left(N^{11}_+\right)\sigma\left(N^{E^2}_-\right)}(\sigma(\tilde{N}^{11}_+)) + \frac{1}{\sigma\left(N^{E^2}_-\right)}(\sigma(\tilde{N}^{E^2}_+)).\]
The first term in fact belongs to $S^{0,0}(T^*\mathbb{R}^3,\Sigma)$ since $\frac{\sigma\left(N^{11}_-\right)}{\sigma\left(N^{11}_+\right)\sigma\left(N^{E^2}_-\right)}\in S^{1,0}(T^*\mathbb{R}^3,\Sigma)$ and $\sigma(\tilde{N}^{11}_+)\in S^{-1}(T^*\mathbb{R}^3)$, while the second term belongs to $S^{0,-1}(T^*\mathbb{R}^3,\Sigma)$. The analysis of the other entries follow similarly. Thus, since $q(x,D)\begin{pmatrix} \tilde{N}^{11}_+ & \tilde{N}^{E^2}_+ \\ \tilde{N}^{11}_- & \tilde{N}^{E^2}_-\end{pmatrix}\in\Psi^{0,-1}(\mathbb{R}^3,\Sigma)\otimes\text{Mat}_{6\times 2}(\mathbb{C})$, similar kinds of estimates follow as in the previous case.

We summarize the arguments above in the following proposition, which, when combined with an argument similar to that in the proof of Theorem \ref{oneparam}, suffices to prove Theorem \ref{twoparam}:
\begin{proposition}
\label{twocoeffprop}
For the problem of recovering $(a_{33},E^2)$ (resp. $(a_{11},E^2)$) given a known value for $a_{11}$ (resp. $a_{33}$), if $\chi$ is supported in $\{|\xi_T|<\epsilon|\xi|\}$ for $\epsilon$ sufficiently small and identically $1$ in a smaller neighborhood, then we have the stability estimates
\[
\|(\nabla r_{33},\nabla r_{E^2})\|_{L^2}\le C(\|f\|_{H^2} + \|(r_{33},r_{E^2})\|_{H^{1/2}}).
\]
and
\[
\|(\nabla r_{11},\nabla r_{E^2})\|_{L^2}\le C(\|f\|_{H^2} + \|(r_{11},r_{E^2})\|_{H^{1/2}}).
\]
where
\begin{align*} f = &\begin{pmatrix} N^{33}_+ & N^{E^2}_+ \\ N^{33}_- & N^{E^2}_- \end{pmatrix}\begin{pmatrix} \nabla r_{33} \\ \nabla r_{E^2} \end{pmatrix} + \begin{pmatrix} \tilde{N}^{33}_+ & \tilde{N}^{E^2}_+ \\ \tilde{N}^{33}_- & \tilde{N}^{E^2}_-\end{pmatrix}\begin{pmatrix} r_{33} \\ r_{E^2}\end{pmatrix}\\
(\text{resp. }&\begin{pmatrix} N^{11}_+ & N^{E^2}_+ \\ N^{11}_- & N^{E^2}_- \end{pmatrix}\begin{pmatrix} \nabla r_{11} \\ \nabla r_{E^2} \end{pmatrix} + \begin{pmatrix} \tilde{N}^{11}_+ & \tilde{N}^{E^2}_+ \\ \tilde{N}^{11}_- & \tilde{N}^{E^2}_-\end{pmatrix}\begin{pmatrix} r_{11} \\ r_{E^2}\end{pmatrix}).
\end{align*}
In particular, if the $r_{\nu}$ have sufficiently small width of support, then we can recover $(a_{33},E^2)$ and $(a_{11},E^2)$ from the combined $qP$ and $qSV$ travel time data, assuming in each case that the remaining parameter is known.
\end{proposition}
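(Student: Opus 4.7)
The plan is to construct a left parametrix for the $6\times 6$ block matrix of operators on the left side of the joint $qP$/$qSV$ pseudolinearization system \eqref{joint}, using the inverse parabolic calculus $\Psi^{m,k}(\mathbb{R}^3,\Sigma)$ from Section \ref{invparsec}, and then read off the stability estimate from Sobolev mapping properties, absorbing the zeroth-order error via the $H^{1/2}$ Poincar\'e inequality already established. Concretely, in each of the two cases I rewrite the system as a $2\times 2$ block equation $M\begin{pmatrix}\nabla r_{\nu}\\ \nabla r_{E^2}\end{pmatrix} + \widetilde M\begin{pmatrix}r_{\nu}\\ r_{E^2}\end{pmatrix} = 0$ with principal symbol $\sigma(M) = \begin{pmatrix} a & b\\ c & d\end{pmatrix}$, and use the scalar $2\times 2$ inverse formula to write $\sigma(M)^{-1} = (1 - bc/(ad))^{-1}\begin{pmatrix} 1/a & -b/(ad)\\ -c/(ad) & 1/d\end{pmatrix}$, where by Theorem \ref{newsymbolthm} the diagonal entries $a,d$ are elliptic in the relevant inverse-parabolic sense ($\sigma(N^{33}_+), \sigma(N^{E^2}_-) \in S^{-1,2}$ elliptic in the first case, and additionally $\sigma(N^{11}_+) \in S^{-1}$ genuinely elliptic in the second).

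The main obstacle is verifying the global ellipticity of the scalar factor $1 - bc/(ad)$ in $S^{0,0}(T^*\mathbb{R}^3,\Sigma)$. Near $\Sigma$ this reduces to the extra vanishing of the numerator relative to the denominator: in the $(a_{33},E^2)$ case the quartic vanishing of $\sigma(N^{33}_-)$ against the quadratic vanishings of $\sigma(N^{33}_+)$ and $\sigma(N^{E^2}_-)$ forces $bc/(ad) \in S^{0,2}\subset S^{0,0}$ with a small multiple near $\Sigma$; in the $(a_{11},E^2)$ case the ellipticity of $\sigma(N^{11}_+)$ plus the $\Sigma$-vanishing of $\sigma(N^{11}_-)$ and $\sigma(N^{E^2}_+)$ produces the same conclusion. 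Away from $\Sigma$ I substitute the quantitative forms \eqref{cutoffest} computed with $\chi$ supported in $\{|\xi_T|<\epsilon|\xi|\}$ and identically one near the equator, so that all four entries' principal symbols split into a background constant times the integrals $a_{\pm,I/T}$ of \eqref{aint}; the resulting ratio is $O(\epsilon^2)$ (with a bounded factor $a_{-,I}a_{+,T}/(a_{+,I}a_{-,T})$ in the second case), so for $\epsilon$ small enough $1 - bc/(ad)$ is bounded away from zero uniformly.

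Given this, a direct entrywise inspection puts the components of $\sigma(M)^{-1}$ in $S^{1,-2}$ (or in $S^{1}, S^{1,0}$ for the $a_{33}$-known entries that involve $1/\sigma(N^{11}_+)$), so its left quantization $q(x,D)$ lies in $\Psi^{1,-2}(\mathbb{R}^3,\Sigma)\otimes\mathrm{Mat}_{3\times 3}(\mathbb{C})$ and satisfies $q(x,D)\circ M = \mathrm{Id} + R_{-1/2}$ with $R_{-1/2}\in\Psi^{-1,-1}(\mathbb{R}^3,\Sigma)$ by Lemma \ref{paropinv} applied entrywise. The companion composition $q(x,D)\circ\widetilde M$ lives in $\Psi^{0,-1}$; this is again checked entry by entry, using $\sigma(\widetilde N^{\nu}_{\pm})|_{\Sigma}=0$ for $\nu\ne 11$, while the two entries involving $\widetilde N^{11}_{\pm}$ are paired with factors of the form $\sigma(N^{\nu}_{\mp})/(\sigma(N^{11}_+)\sigma(N^{E^2}_-))$ or $\sigma(N^{\nu}_{\mp})/(\sigma(N^{33}_+)\sigma(N^{E^2}_-))$ that vanish on $\Sigma$ and thus supply the needed $S^{m,1}$ factor.

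Applying $q(x,D)$ to the system and rearranging, the mapping properties $\Psi^{1,-2}\subset\Psi^2_{1/2}$ and $\Psi^{-1,-1}\subset\Psi^{-1/2}_{1/2}$, $\Psi^{0,-1}\subset\Psi^{1/2}_{1/2}$ give
\[\|(\nabla r_{\nu},\nabla r_{E^2})\|_{L^2}\le C(\|f\|_{H^2} + \|(\nabla r_{\nu},\nabla r_{E^2})\|_{H^{-1/2}} + \|(r_{\nu},r_{E^2})\|_{H^{1/2}}),\]
and the $H^{-1/2}$ term is absorbed into the left side by interpolation. The stability estimate then follows, and the $H^{1/2}$ Poincar\'e inequality from the proposition preceding this corollary lets the last term be absorbed once $\mathrm{supp}\,r_{\nu}$ has sufficiently small width, yielding the injectivity statement of Theorem \ref{twoparam}.
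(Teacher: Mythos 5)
Your overall route is the same as the paper's: write each case as a $2\times 2$ block system, invert the symbol matrix via the scalar formula $\bigl(1-\tfrac{bc}{ad}\bigr)^{-1}\begin{pmatrix}1/a & -b/(ad)\\ -c/(ad) & 1/d\end{pmatrix}$, prove ellipticity of $1-\tfrac{bc}{ad}$ near $\Sigma$ from the quartic vanishing of $\sigma(N^{33}_-)$ (resp.\ the ellipticity of $\sigma(N^{11}_+)$) and away from $\Sigma$ from the $O(\epsilon^2)$ estimates \eqref{cutoffest}, quantize, and use $\Psi^{1,-2}\subset\Psi^2_{1/2}$, $\Psi^{-1,-1}\subset\Psi^{-1/2}_{1/2}$, $\Psi^{0,-1}\subset\Psi^{1/2}_{1/2}$ together with the $H^{1/2}$ Poincar\'e inequality. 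The $(a_{33},E^2)$ case and all of this machinery match the paper. (A minor overstatement: your claim that the ratio lies in $S^{0,2}$ in the first case does not follow from the quartic vanishing of the principal symbol alone, since the subprincipal symbol of $N^{33}_-$ is only known to vanish to first order on $\Sigma$; but the weaker facts you actually need — membership in $S^{0,0}$, smallness near $\Sigma$, $O(\epsilon^2)$ away from it — do hold and are what the paper uses.)

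There is, however, a genuine gap in your treatment of $q(x,D)\circ\widetilde M$ in the $(a_{11},E^2)$ case (and note $\widetilde N^{11}_{\pm}$ does not occur at all in the $(a_{33},E^2)$ case, so your ``$\sigma(N^{\nu}_{\mp})/(\sigma(N^{33}_+)\sigma(N^{E^2}_-))$'' pairing never arises). In the product $q\widetilde M$, the weight $\widetilde N^{11}_+$ is multiplied by $q_{11}=1/\sigma(N^{11}_+)$ and $q_{21}$, while $\widetilde N^{11}_-$ is multiplied by $q_{12}$ and $q_{22}=1/\sigma(N^{E^2}_-)$ — not only by the off-diagonal entries you cite. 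The pairings with $q_{11}$, $q_{12}$, $q_{21}$ are harmless: $1/\sigma(N^{11}_+)$ is a genuine order-$1$ symbol and the off-diagonal entries lie in $S^{1,0}$, so those products land in $S^{0,0}\subset S^{0,-1}$ with no $\Sigma$-vanishing needed. The dangerous term is $\frac{1}{\sigma(N^{E^2}_-)}\,\sigma(\widetilde N^{11}_-)$: here $1/\sigma(N^{E^2}_-)\in S^{1,-2}$ is of inverse parabolic type, and if one only uses $\widetilde N^{11}_-\in\Psi^{-1,0}$ (which is all that Theorem \ref{newsymbolthm} records), the product is merely in $S^{0,-2}\subset S^{1}_{1/2}$, producing an error $\|(r_{11},r_{E^2})\|_{H^1}$ that cannot be absorbed by the small-width Poincar\'e argument. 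What rescues this entry — and what the paper's ``slightly different argument'' refers to — is the computation in Section \ref{excomp} that $E^{11}_-=f^{11}_-\xi_T^2$, so $\partial_xE^{11}_-$ and $\partial_\xi E^{11}_-$ are smooth multiples of $\xi_T$ and hence $\sigma_{-1}(\widetilde N^{11}_-)$ vanishes on $\Sigma$, i.e.\ $\sigma(\widetilde N^{11}_-)\in S^{-1,1}$; only then is the product in $S^{0,-1}$ and $q(x,D)\widetilde M\in\Psi^{0,-1}$, giving the stated $H^{1/2}$ error term. You never invoke this vanishing (your text attributes the needed gain to $\Sigma$-vanishing of the $q$-entries, which multiply the other weights), so as written the second stability estimate is not established; adding this one observation closes the gap.
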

\begin{proof}[This thus proves Theorem \ref{twoparam}]\end{proof}

We now look at the case where there is a functional relationship between one of the material parameters and the other two. We recall the calculations in \eqref{a33func}, \eqref{e2func}, and \eqref{a11func}.

Suppose first that $a_{33} = f(a_{11},E^2)$. Recall that in this case the effective symbol $\begin{pmatrix} \sigma(N^{11}_{eff,+}) & \sigma(N^{E^2}_{eff,+}) \\ \sigma(N^{11}_{eff,-}) & \sigma(N^{E^2}_{eff,-})\end{pmatrix}$ has the upper-left entry being elliptic on $\Sigma$, while the other symbols have their principal parts vanishing quadratically on $\Sigma$, with the principal part of $\sigma(N^{E^2}_{eff,-})$ vanishing nondegenerately and the subprincipal part nonvanishing. Then similar arguments from above show that $\frac{\sigma(N^{E^2}_{eff,+})\sigma(N^{11}_{eff,-})}{\sigma(N^{11}_{eff,-})\sigma(N^{E^2}_{eff,-})}\in S^{0,0}(T^*\mathbb{R}^3,\Sigma)$ and in fact vanishes near $\Sigma$. Away from $\Sigma$, from \eqref{a33func} we have
\begin{align*}&\begin{pmatrix} \sigma_{-1}(N^{11}_{eff,+}) & \sigma_{-1}(N^{E^2}_{eff,+}) \\ \sigma_{-1}(N^{11}_{eff,-}) & \sigma_{-1}(N^{E^2}_{eff,-})\end{pmatrix} \\
&= \begin{pmatrix} (1 + O(\epsilon^2))(a_{+,I} + \tilde{f}_{11}a_{+,T}) & \left(\frac{-1}{(a_{11}-a_{55})_l}+\tilde{f}_{E^2}+O(\epsilon^2)\right)a_{+,T}\\ O(\epsilon^2)(a_{-,I}+a_{-,T})& \left(\frac{1}{(a_{11}-a_{55})_l}+O(\epsilon^2)\right)a_{-,T} \end{pmatrix}.
\end{align*}
In particular, if $\tilde{f}_{11}\ge 0$ and $\epsilon$ is sufficiently small, then $\sigma_{-1}(N^{11}_{eff,+})$ is elliptic everywhere, and if $\epsilon$ is sufficiently small then $\sigma_{-1}(N^{E^2}_{eff,-})$ is a positive multiple of $a_{-,T}$ and hence elliptic away from $\Sigma$. Furthermore, we can compute the determinant of the above matrix to be
\[\left(\frac{1}{(a_{11}-a_{55})_l}+O(\epsilon^2)\right)\left(a_{+,I} + \tilde{f}_{11}a_{+,T}\right)a_{-,T} - O(\epsilon^2)(a_{-,I}+a_{-,T})a_{+,T}.\]
Since $a_{-,T}$ and $a_{+,T}$ are of comparable sizes since they both vanish nondegenerately quadratically on $\Sigma$, it follows that the above expression is always nonzero away from $\Sigma$ if $\epsilon$ is sufficiently small. This then implies that $1 - \frac{\sigma(N^{E^2}_{eff,+})\sigma(N^{11}_{eff,-})}{\sigma(N^{11}_{eff,-})\sigma(N^{E^2}_{eff,-})}$ is everywhere elliptic, and thus the conclusions are exactly the same as if $a_{33}$ were known in Proposition \ref{twocoeffprop} by following the same line of reasoning. (Note in this case that the operators $\tilde{N}^{\nu}_{eff,\pm}$ have the same qualitative behavior as the operators $\tilde{N}^{\nu}_{\pm}$ in the non-functional case, namely that all of the operators vanish on $\Sigma$ except for $\tilde{N}^{11}_{eff,+}$.)

If instead $E^2 = f(a_{11},a_{33})$, then again we have that $\sigma(N^{11}_{eff,+})$ is elliptic, and that the other operators have principal parts vanishing quadratically on $\Sigma$, with the principal part of $\sigma(N^{33}_{eff,-})$ vanishing nondegenerately and the subprincipal part nonvanishing, as long as $\frac{\partial f}{\partial a_{33}}$ is uniformly nonzero. Thus as before we have $\frac{\sigma(N^{33}_{eff,+})\sigma(N^{11}_{eff,-})}{\sigma(N^{11}_{eff,-})\sigma(N^{33}_{eff,-})}\in S^{0,0}(T^*\mathbb{R}^3,\Sigma)$. Away from $\Sigma$, from \eqref{e2func} we have that the effective symbol $\begin{pmatrix} \sigma_{-1}(N^{11}_{eff,+}) & \sigma_{-1}(N^{33}_{eff,+}) \\ \sigma_{-1}(N^{11}_{eff,-}) & \sigma_{-1}(N^{33}_{eff,-})\end{pmatrix}$ is given by
\small
\[\begin{pmatrix} (1 + O(\epsilon^2))a_{+,I} - \left(\frac{1}{(a_{11}-a_{55})_l}\tilde{f}_{11}+O(\epsilon^2)\right)a_{+,T} & \left(1 - \frac{1}{(a_{11}-a_{55})_l}\tilde{f}_{33}+O(\epsilon^2)\right)a_{+,T}\\ O(\epsilon^2)a_{-,I}+\left(\frac{2}{(a_{11}-a_{55})_l}\tilde{f}_{11}+O(\epsilon^2)\right) a_{-,T}& \left(\frac{1}{(a_{11}-a_{55})_l}\tilde{f}_{33}+O(\epsilon^2)\right)a_{-,T} \end{pmatrix}.\]
\normalsize
The determinant of the above matrix is
\[\left(\frac{1}{(a_{11}-a_{55})_l}(\tilde{f}_{33}a_{+,I} - \tilde{f}_{11}a_{+,T})+O(\epsilon^2)\right)a_{-,T} + O(\epsilon^2)a_{+,T}.\]
Since on the support of $\chi$ we have $\xi_T^2\le\frac{\epsilon^2}{1-\epsilon^2}\xi_I^2$, it follows that $a_{+,T}\le\frac{\epsilon^2}{1-\epsilon^2}a_{+,I}$. Hence, as long as $\tilde{f}_{33}$ is uniformly bounded away from zero, by choosing $\epsilon$ sufficiently small we can guarantee ${\frac{1}{(a_{11}-a_{55})_l}(\tilde{f}_{33}a_{+,I} - \tilde{f}_{11}a_{+,T})+O(\epsilon^2)}>0$, and hence for $\epsilon$ sufficiently small (depending on the possible values of $\frac{\partial f}{\partial a_{33}}$ and $\frac{\partial f}{\partial a_{11}}$) the determinant is nonvanishing away from $\Sigma$. Thus, the same conclusions from the above paragraph \emph{mutatis mutandis} hold.

Finally, suppose $a_{11} = f(a_{33},E^2)$. We work with the simplifying assumption that $\frac{\partial f}{\partial a_{33}}$ and $\frac{\partial f}{\partial E^2}$ are constant, so that they equal $\tilde{f}_{33}$ and $\tilde{f}_{E^2}$, respectively. In the effective symbol $\begin{pmatrix} \sigma(N^{33}_{eff,+}) & \sigma(N^{E^2}_{eff,+}) \\ \sigma(N^{33}_{eff,-}) & \sigma(N^{E^2}_{eff,-})\end{pmatrix}$, we have that $\sigma(N^{33}_{eff,-})$ and $\sigma(N^{E^2}_{eff,-})$ both have principal parts vanishing quadratically on $\Sigma$. It follows that the principal part of the determinant $\sigma(N^{33}_{eff,+})\sigma(N^{E^2}_{eff,-}) - \sigma(N^{E^2}_{eff,+})\sigma(N^{33}_{eff,-})$ vanishes quadratically on $\Sigma$, since the principal part of the determinant is given by the above expression with the symbols replaced by their principal parts, and more importantly its subprincipal part on $\Sigma$ is given by
\[\sigma_{-1}(N^{33}_{eff,+})\sigma_{-2}(N^{E^2}_{eff,-}) - \sigma_{-1}(N^{E^2}_{eff,+})\sigma_{-2}(N^{33}_{eff,-})\]
since $\sigma_{-1}(N^{E^2}_{eff,-})$ and $\sigma_{-1}(N^{33}_{eff,-})$ vanish on $\Sigma$. Using \eqref{n33+1}, \eqref{ne2+1}, \eqref{n33-2}, and \eqref{ne2-2}, the constancy of the derivative allows us to rewrite the above expression as
\begin{align*}
&\tilde{f}_{33}\sigma_{-1}(N^{11}_+)(\sigma_{-2}(N^{E^2}_-) + \tilde{f}_{E^2}\sigma_{-2}(N^{11}_-)) - \tilde{f}_{E^2}\sigma_{-1}(N^{11}_+)(\tilde{f}_{33}\sigma_{-1}(N^{11}_-)) \\
&= \tilde{f}_{33}\sigma_{-1}(N^{11}_+)\sigma_{-2}(N^{E^2}_-)
\end{align*}
(without the constancy assumption the subprincipal parts have a more complicated expression, and in particular no guarantee of cancellation of the $\tilde{f}_{33}\tilde{f}_{E^2}$ terms). Thus, we see that as long as $\tilde{f}_{33}\ne 0$ (without any assumption on $\tilde{f}_{E^2}$) we have that the subprincipal part of the determinant does not vanish. Since the principal part of the effective symbol can be written via \eqref{a11func} as
\[\begin{pmatrix} (1 + O(\epsilon^2))(a_{+,T} + \tilde{f}_{33}a_{+,I}) & \left(\frac{-1}{(a_{11}-a_{55})_l}+O(\epsilon^2)\right)a_{+,T}+\left(\tilde{f}_{E^2}+O(\epsilon^2)\right)a_{+,I}\\\left(-\left(\frac{E^2}{a_{11}-a_{55}}\right)_l\tilde{f}_{33}+O(\epsilon^2)\right)a_{-,T}& \left(\frac{1}{(a_{11}-a_{55})_l}-\left(\frac{E^2}{a_{11}-a_{55}}\right)_l\tilde{f}_{E^2}+O(\epsilon^2)\right)a_{-,T}\end{pmatrix}\]
we see that its determinant is given by
\small
\begin{align*}&\left[\frac{\tilde{f}_{33}}{(a_{11}-a_{55})_l}a_{+,I} + \left(\frac{1}{(a_{11}-a_{55})_l}-\left(\frac{E^2}{a_{11}-a_{55}}\right)_l\tilde{f}_{E^2}\right.\right.\\
&\left.\left.-\left(\frac{E^2}{a_{11}-a_{55}}\right)_l\frac{1}{(a_{11}-a_{55})_l}\tilde{f}_{33}\right)a_{+,T} + O(\epsilon^2)\right]a_{-T}.
\end{align*}
\normalsize
Again using $a_{+,T}\le\frac{\epsilon^2}{1-\epsilon^2}a_{+,I}$, we see that as long as $\tilde{f}_{33}$ is bounded away from zero, for $\epsilon$ small enough (depending on the $\tilde{f}$'s) the prefactor is bounded away from zero; in particular the determinant has principal symbol which vanishes nondegenerately quadratically. Writing
\[\begin{pmatrix} \sigma(N^{33}_{eff,+}) & \sigma(N^{E^2}_{eff,+}) \\ \sigma(N^{33}_{eff,-}) & \sigma(N^{E^2}_{eff,-})\end{pmatrix}^{-1} = \frac{1}{d}\begin{pmatrix} \sigma(N^{E^2}_{eff,-}) & -\sigma(N^{E^2}_{eff,+}) \\ -\sigma(N^{33}_{eff,-}) & \sigma(N^{33}_{eff,+})\end{pmatrix}\]
with $d = \sigma(N^{33}_{eff,+})\sigma(N^{E^2}_{eff,-}) - \sigma(N^{E^2}_{eff,+})\sigma(N^{33}_{eff,-})$, we have that $\frac{1}{d}\in S^{2,-2}(T^*\mathbb{R}^3,\Sigma)$ by the comments above, and hence the left entries of the inverse matrix are symbols in $S^{1,0}(T^*\mathbb{R}^3,\Sigma)$ while the right entries are symbols in $S^{1,-2}(T^*\mathbb{R}^3,\Sigma)$. For the operator matrix $\begin{pmatrix}\tilde{N}^{33}_{eff,+} & \tilde{N}^{E^2}_{eff,+} \\ \tilde{N}^{33}_{eff,-} & \tilde{N}^{E^2}_{eff,-}\end{pmatrix}$, we have that the principal symbols in the bottom row vanish on $\Sigma$ (essentially because $\sigma_{-1}(\tilde{N}^{11}_-)$ vanishes on $\Sigma$), and hence the full symbols in the bottom row are in $S^{-1,1}(T^*\mathbb{R}^3,\Sigma)$; hence applying the quantization of $\begin{pmatrix} \sigma(N^{33}_{eff,+}) & \sigma(N^{E^2}_{eff,+}) \\ \sigma(N^{33}_{eff,-}) & \sigma(N^{E^2}_{eff,-})\end{pmatrix}^{-1}$ to this operator matrix results in a matrix-valued operator in $\Psi^{0,-1}(\mathbb{R}^3,\Sigma)$ as before. Thus the same conclusions hold as before.

We summarize the arguments above in the following proposition, which suffices to prove Theorem \ref{func}:
\begin{proposition}
\label{funcprop}
Suppose there is a known functional relationship $a_{33} = f(a_{11},E^2)$ with $\frac{\partial f}{\partial a_{11}}\ge 0$, or $E^2 = f(a_{11},a_{33})$ with $\left|\frac{\partial f}{\partial a_{33}}\right|>0$, or $a_{11} = f(a_{33},E^2)$ with the derivatives $\frac{\partial f}{\partial a_{33}}$ and $\frac{\partial f}{\partial E^2}$ constant and $\frac{\partial f}{\partial a_{33}}\ne 0$, and if the $r_{\nu}$ have sufficiently small width of support, then we can recover $(a_{11},E^2)$ (resp. $(a_{11},a_{33})$ and $(a_{33},E^2)$) from the combined $qP$ and $qSV$ travel time data.
\end{proposition}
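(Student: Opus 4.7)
The plan is to reduce each of the three functional-relationship cases to the two-parameter framework already developed in Proposition \ref{twocoeffprop}. In each case, substituting the functional relation into $\tilde{G}-G$ regroups the contribution of the eliminated parameter into the two remaining ones, producing effective operators $N^{\nu}_{eff,\pm}$ and $\tilde{N}^{\nu}_{eff,\pm}$. Their symbolic behavior has already been recorded in \eqref{a33func}, \eqref{e2func}, and \eqref{a11func}, so the task reduces to checking that the resulting $2\times 2$ effective matrix of operators still admits a left parametrix in the calculus of Section \ref{invparsec}, and then running the parametrix-plus-Poincar\'e argument as in Proposition \ref{twocoeffprop}.

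For the cases $a_{33}=f(a_{11},E^2)$ and $E^2=f(a_{11},a_{33})$, I would follow the same template as the ``$a_{33}$ known'' case of Proposition \ref{twocoeffprop}: verify that one diagonal entry (here $N^{11}_{eff,+}$) is elliptic and the other diagonal entry has parabolic-type principal-plus-subprincipal symbol, so that the off-diagonal terms (whose principal parts vanish on $\Sigma$) can be absorbed into the off-diagonal entries of the formal inverse matrix. Near $\Sigma$, the $S^{0,0}$-smallness of the cross-product $\sigma(N^{33}_{eff,-})\sigma(N^{E^2}_{eff,+})/[\sigma(N^{11}_{eff,+})\sigma(N^{E^2}_{eff,-})]$ (resp.\ the analogue with $a_{33}$ in place of $E^2$) is automatic from the $S^{-1,2}$ membership of off-diagonals combined with the $S^{1,-2}$ membership of the diagonal inverses. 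Away from $\Sigma$ I would use the explicit quantitative formulas in \eqref{a33func} and \eqref{e2func}, together with the sign hypotheses $\partial f/\partial a_{11}\ge 0$ or $|\partial f/\partial a_{33}|>0$, to show the determinant is a nonvanishing symbol of the appropriate order provided the cutoff-width $\epsilon$ is chosen small enough.

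The main obstacle is the third case $a_{11}=f(a_{33},E^2)$, because $N^{11}_+$ is genuinely elliptic rather than parabolic, and adding multiples of $N^{11}_\pm$ to the other operators does \emph{not} directly reproduce the previous template. Here both bottom-row effective operators become parabolic, so the determinant vanishes quadratically on $\Sigma$ and one must extract its subprincipal behavior on $\Sigma$. The computation using \eqref{a11func} shows that, under the constancy assumption on $\partial f/\partial a_{33}$ and $\partial f/\partial E^2$, the potentially dangerous cross-terms involving $\tilde{f}_{33}\tilde{f}_{E^2}\sigma_{-2}(N^{11}_-)$ cancel, leaving a subprincipal determinant proportional to $\tilde{f}_{33}\sigma_{-1}(N^{11}_+)\sigma_{-2}(N^{E^2}_-)$, nonvanishing on $\Sigma$ under $\tilde{f}_{33}\ne 0$. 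I would then use Lemma \ref{paropinv} componentwise to show $1/d\in S^{2,-2}(T^*\mathbb{R}^3,\Sigma)$, so that the formal inverse matrix has left entries in $S^{1,0}$ and right entries in $S^{1,-2}$, exactly as in the second subcase of Proposition \ref{twocoeffprop}. Checking the determinant away from $\Sigma$ relies on the inequality $a_{+,T}\le\tfrac{\epsilon^2}{1-\epsilon^2}a_{+,I}$ and the hypothesis $\tilde{f}_{33}\ne 0$ to ensure positivity for $\epsilon$ small.

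Once invertibility of the effective matrix symbol is established in all three cases, quantization yields a left parametrix $Q\in\Psi^{1,-2}\otimes\mathrm{Mat}_{3\times 3}$ such that $Q\circ N_{eff}=\mathrm{Id}+R_{-1/2}$ with $R_{-1/2}\in\Psi^{-1,-1}\subset\Psi^{-1/2}_{1/2}$, while $Q\circ\tilde{N}_{eff}\in\Psi^{0,-1}\subset\Psi^{-1/2}_{1/2}$ (using that the $\tilde{N}^\nu_{eff,\pm}$ inherit vanishing of their principal symbols on $\Sigma$ from their non-effective counterparts, except for $\tilde{N}^{11}_{eff,+}$, which only enters paired with the $S^{1,0}$ top-left or $S^{1,-2}$ top-right entries of $Q$). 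Applying $Q$ to the pseudolinearization identity and using $H^s\to H^{s+1/2}$ boundedness of the residual operators yields
\[
\|(\nabla r_{\nu_1},\nabla r_{\nu_2})\|_{L^2}\le C(\|f\|_{H^2}+\|(r_{\nu_1},r_{\nu_2})\|_{H^{1/2}}),
\]
and the $H^{1/2}$-Poincar\'e inequality from the preceding proposition absorbs the last term once the common support of the $r_\nu$ has sufficiently small width. Taking $f=0$ (from equal travel-time data) then forces $r_{\nu_1}=r_{\nu_2}=0$, completing the recovery.
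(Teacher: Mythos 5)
Your proposal follows essentially the same route as the paper's proof: form the effective operators from the functional relationship, use \eqref{a33func}, \eqref{e2func}, \eqref{a11func} together with the sign hypotheses and the small-$\epsilon$ cutoff estimates (including $a_{+,T}\le\tfrac{\epsilon^2}{1-\epsilon^2}a_{+,I}$) to invert the $2\times 2$ effective symbol matrix both near and away from $\Sigma$ — in particular the key cancellation under the constancy assumption leaving the subprincipal determinant $\tilde{f}_{33}\sigma_{-1}(N^{11}_+)\sigma_{-2}(N^{E^2}_-)$ in the case $a_{11}=f(a_{33},E^2)$ — and then quantize, obtain the stability estimate, and absorb the $H^{1/2}$ term via the width-Poincar\'e inequality. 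The only blemishes are cosmetic bookkeeping slips (e.g. $\Psi^{0,-1}\subset\Psi^{1/2}_{1/2}$ rather than $\Psi^{-1/2}_{1/2}$, and the non-vanishing entry $\tilde{N}^{11}_{eff,+}$ is paired with the left \emph{column} of $Q$, whose entries lie in $S^{1,0}$), none of which affect the final estimate or the conclusion.
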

\begin{proof}[This thus proves Theorem \ref{func}]\end{proof}

We conclude by commenting that the problem of recovering $a_{11}$ and $a_{33}$ from $E^2$ data cannot be solved using the techniques above, since the operator for the $qSV$ speed in the $a_{33}$ component vanishes quartically on $\Sigma$. Furthermore, if $E^2$ is identically zero, then the operators for the $qSV$ speed are identically zero.

\nocite{*}
\bibliographystyle{plain}
\bibliography{ti_global_072622}

\end{document}